\documentclass[11pt,a4paper,reqno]{article}
\usepackage{amsmath}
\usepackage{amsthm}
\usepackage{enumerate}
\usepackage{amssymb}

\usepackage{verbatim}
\usepackage{url}
\usepackage[latin1]{inputenc}

\usepackage{caption}
\usepackage{graphicx,color}
\setlength{\captionmargin}{8mm}

\setlength{\textwidth}{6.3in}
\setlength{\textheight}{8.3in}
\setlength{\topmargin}{0pt}
\setlength{\headsep}{30pt}
\setlength{\headheight}{0pt}
\setlength{\oddsidemargin}{0pt}
\setlength{\evensidemargin}{0pt}

\usepackage[round, longnamesfirst]{natbib}

\def\N{{\mathbb N}}
\def\Z{{\mathbb Z}}

\def\R{{\mathbb R}}
\def\Eb{{\mathbb E}}
\def\Vb{{\mathbb V}}
\def\F{{\mathcal F}}
\def\T{{\mathcal T}}
\def\G{{\mathcal G}}
\def\Vc{{\mathcal V}}

\def\ebf{{\mathbf e}}

\def\tSk{\tau_{S_k}}

\def\Trnn{T(I,\hat{v}_{\rho_{\nu(n)}})}

\def\rnn{\rho_{\nu(n)}}
\def\mut{\mu_\tau}
\def\muS{\mu_S}
\def\raw{\rightarrow}

\def\mt{m_\tau}
\def\Mt{M_\tau}
\def\Tc{T_c}
\def\Td{T_{\text{delay}}}
\def\Nc{N_c}
\def\Pt{P_\tau}

\def\tp{\tau^{\prime}}

\def\Tp{T^{\prime}}
\def\np{n^{\prime}}

\def\tB{\tilde{B}}

\def\Ank{A_{n_k}}

\def\tlow{t^\prime}
\def\thi{t^{\prime\prime}}

\DeclareMathOperator{\E}{E}
\DeclareMathOperator{\Var}{Var}
\DeclareMathOperator{\Cov}{Cov}

\DeclareMathOperator{\dist}{dist}
\DeclareMathOperator{\ind}{{\bf 1}}

\def\be{\begin{equation}}
\def\ee{\end{equation}}
\def\bea{\begin{equation*}}
\def\eea{\end{equation*}}
\def\begs{\begin{split}}
\def\ends{\end{split}}

\newtheorem{thm}{Theorem}[section]
\newtheorem{lma}[thm]{Lemma}

\newtheorem{prop}[thm]{Proposition}
\newtheorem{claim}{Claim}

\newtheorem{df}[thm]{Definition}

\theoremstyle{remark}
\newtheorem{preremark}[thm]{Remark}
\newtheorem{preex}[thm]{Example}

\newenvironment{remark}{\begin{preremark}}{\qed\end{preremark}}

\theoremstyle{definition}
\newtheorem*{acknow}{Acknowledgements}
\newtheorem*{keywords}{Keywords}
\newtheorem*{msc}{MSC 2010}

\numberwithin{equation}{section}

\begin{document}

\title{Asymptotics of first-passage percolation on 1-dimensional graphs}

\date{\today}

\author{Daniel Ahlberg\thanks{Department of Mathematical Sciences, University of Gothenburg, and Department of Mathematical Sciences, Chalmers University of Technology.}}

\maketitle

\begin{abstract}
In this paper we consider first-passage percolation on certain 1-dimensional periodic graphs, such as the $\Z\times\{0,1,\ldots,K-1\}^{d-1}$ nearest neighbour graph for $d,K\geq1$. We find that both length and weight of minimal-weight paths present a typical 1-dimensional asymptotic behaviour. Apart from a strong law of large numbers, we derive a central limit theorem, a law of the iterated logarithm, and a Donsker theorem for these quantities. In addition, we prove that the mean and variance of the length and weight of the minimizing path between two points are monotone in the distance between the points.

The main idea used to deduce the mentioned properties is the exposure of a regenerative structure within the process. We describe this structure carefully and show how it can be used to obtain a detailed description of the process based on classical theory for i.i.d.\ sequences. In addition, we show how the regenerative idea can be used to couple two first-passage processes to eventually coincide. Using this coupling we derive a 0--1 law.

\begin{keywords}
First-passage percolation, renewal theory, classical limit theorems, 0--1 law.
\end{keywords}

\begin{msc}
60K35, 60K05.
\end{msc}

\end{abstract}

\section{Introduction}

First-passage percolation can be thought of as a discrete model for the spread of an infectious entity. The model has been studied extensively since introduced by \cite{hamwel65}, most notably in the case when the underlying discrete structure is given by the $\Z^d$ nearest neighbour lattice for some $d\geq2$ (see e.g.\ \cite{kesten86} for the state of the art in the mid 80s, and \cite{grikes12} for the development since). Although these studies have generated a variety of tools and techniques, there are many conjectured properties of the model that so far have not been rigorously verified. Among these properties we find monotonicity of travel times, fluctuations around the asymptotic shape, and the existence of infinite geodesics (see \cite{howard04} for a more comprehensive list of open problems). The incomplete picture in the challenging higher dimensional case is the main motivation behind the present study, in which first-passage percolation on essentially 1-dimensional periodic graphs is considered. Of 
particular interest will be subgraphs of the usual $\Z^d$ lattice, such as the $\Z\times\{0,1,\ldots,K-1\}^{
d-1}$ nearest neighbour graph, for some $K,d\geq1$, below referred to as the \emph{$(K,d)$-tube}.

The asymptotic behaviour of first-passage percolation on the class of graphs considered in this study is found to be typically 1-dimensional, and differ in several aspects from what is expected in higher dimensions. Apart from a usual strong law of large numbers, we prove that the first-passage process further obeys a central limit theorem, a law of the iterated logarithm, and a Donsker theorem. In addition to these classical limit theorems, we are able to obtain a more precise description of the process, showing that both the mean and variance of travel times grow monotonically in the distance between vertices. This behaviour is plausible to hold also in higher dimensions, but remains so far unknown except in a certain special case (see \cite{gouere12}). We further show that an analogous behaviour to the one described above also holds for the $\ell^1$-length of the path with minimal weight between two vertices. Finally, the regenerative idea is used to construct a coupling between two first-passage 
processes with different initial configurations. As an application of the coupling, we deduce a 0--1 law.

The periodicity of the graphs we consider induces a form of translation invariance of the first-passage process. Together with the 1-dimensional nature of the graphs, this allows for the identification of a suitable regenerative structure. The regenerative structure may then be exploited to study the first-passage process with the help of classical renewal theory. This paper aims at giving a clear demonstration of how the identification of a regenerative behaviour can provide detailed information about a random process. The same regenerative idea has also been found useful in the study of first-passage percolation on the integer lattice in two and more dimensions (see \cite{A13,A13-2}).

The asymptotic results derived in this study greatly generalizes earlier results by \cite{schlemm11}, who proves a central limit theorem for first-passage percolation on the $(2,2)$-tube with exponential weights. A central limit theorem and Donsker theorem have also been obtained independently and by different methods by \cite{chadey09} (further commented upon below). Finally, let us mention that although the rate of growth on either $\Z^d$ for $d\geq2$, or the $(K,d)$-tube seems practically impossible to compute, this has successfully been considered for some `width-two stretches' such as the $(2,2)$-tube by \cite{schlemm09,renlund10,renlund11,flagamsor11}. Before we present the contributions of this study further, we introduce some basic definitions and provide the reader with some background.

\paragraph{Description of model}

Let $\G=(\Vb,\Eb)$ be a connected graph, and associate to the edges of the graph non-negative i.i.d.\ random weights $\{\tau_e\}_{e\in\Eb}$, referred to as \emph{passage times}. Passage times are interpreted as the time it takes an infection to traverse the edge. To avoid trivialities we assume throughout that the passage time distribution $\Pt(\;\cdot\;):=P(\tau_e\in\cdot\;)$ does not concentrate all mass at a single point. Let us by a \emph{path} refer to an alternating sequence of vertices and edges; $v_0,e_1,v_1,\ldots,e_m,v_m$,
such that the vertex $v_k$ is the endpoint of the edges $e_k$ and $e_{k+1}$.
A path with one endpoint in $U$ and the other in $V$, where $U,V\subset\Vb$, will be referred to as a path from $U$ to $V$. We will repeatedly abuse notation and identify a path with its set of edges, and occasionally with its set of vertices. For a path $\Gamma$, we define the passage time of $\Gamma$ as $T(\Gamma):=\sum_{e\in\Gamma}\tau_e$, and define the \emph{travel time} between two sets $U,V\subset\Vb$ as
$$
T(U,V):=\inf\{T(\Gamma):\Gamma\text{ is a path from }U\text{ to }V\}.
$$
In the case $U=\{u\}$ or $V=\{v\}$, we simply suppress brackets.

First-passage percolation refers to the process started with a finite set $I\subset\Vb$ of infected vertices, from which the infection spreads to adjacent vertices with delays indicated by the passage times. The corresponding set of infected vertices at time $t$, is given by
$$
B_t:=\big\{v\in\Vb:T(I,v)\leq t\big\}.
$$

\paragraph{Higher dimensional background}

The foremost characteristic feature of first-passage percolation is its subadditive behaviour, meaning that for all vertices $u$, $v$ and $w$ of the graph
$$
T(u,v)\,\leq\, T(u,w)+T(w,v).
$$
Its importance was realised already by \cite{hamwel65}, and further inspired \cite{kingman68} to derive his Subadditive Ergodic Theorem. Consider in the following first-passage percolation on the $\Z^d$ lattice (for $d\geq2$), and let $Y$ denote the minimum of $2d$ independent random variables distributed according to $\Pt$. When $\E[Y]<\infty$, it follows the Subadditive Ergodic Theorem that there is a constant $\mu(\mathbf{e}_1)$, referred to as the \emph{time constant}, such that
\be\label{int:timeconstant}
\lim_{n\raw\infty}\frac{T(I,\mathbf{n})}{n}=\mu(\mathbf{e}_1),\quad\text{almost surely and in }L^1,
\ee
where $\mathbf{e}_1=(1,0,\ldots,0)$, and $\mathbf{n}=n\mathbf{e}_1$. A necessary and sufficient condition for simultaneous convergence in all directions was provided by \cite{coxdur81}, in inspiration of a result due to \cite{richardson73}. For the sake of convenience, let us identify $B_t$ by the set in $\R^d$ obtained by centering a unit cube around each point $z\in B_t$. Under the assumption that $\E[Y^d]<\infty$, the result of Cox and Durrett, known as the Shape Theorem, states that:
If $\mu(\mathbf{e}_1)>0$, then there exists a deterministic compact and convex set $B^\ast\subset\R^d$ with nonempty interior such that for all $\epsilon>0$, almost surely,
\be\label{eq:shapeinclusion}
(1-\epsilon)B^\ast\,\subset\,\frac{1}{t}B_t\,\subset\,(1+\epsilon)B^\ast,\quad\text{for }t\text{ large enough}.
\ee
If $\mu(\mathbf{e}_1)=0$ and $K\subset\R^d$ is compact, then, almost surely, $K\subset\frac{1}{t}\tB_t$ for all $t$ large enough.
It was further shown by \citet{kesten86} that $\mu(\mathbf{e}_1)=0$ if and only if $\Pt(0)\geq p_c(d)$,
where $p_c(d)$ is the critical value for independent bond percolation on the $\Z^d$ lattice.

The nature of fluctuations around the asymptotic shape have turned out to be harder to understand. Apart from a result by \cite{keszha97} in the `critical' case when $d=2$ and $\Pt(0)=p_c(2)=1/2$, precise results remain to a large extent unknown. In an earlier study, \citet{kesten93} showed that for $d\geq2$, if $\Pt(0)<p_c$ and $\E[\tau_e^2]<\infty$, then there are constants $C_1>0$ and $C_2<\infty$ such that
$$
C_1\;\leq\;\Var\big(T(I,\mathbf{n})\big)\;\leq\; C_2n,\quad\text{for all }n\geq1.
$$
The upper bound has since been improved, first by \cite{benkalsch03} in the special case of $\{a,b\}$-valued passage times, where $0<a<b<\infty$, and later for more general distributions by \citet{benros06,damhansos13}. These studies all give an upper bound of order $n/\log n$ for $d\geq2$. However, the predicted truth suggests that for $d=2$ the correct order of growth is $n^{2/3}$, and it is not clear which behaviour to expect in higher dimensions. For $d=2$ \citet{newpiz95} have shown, given that the passage-time distribution does not have a too large point mass at $\inf\{x\geq0:\Pt([0,x])>0\}$, that $\Var\big(T(I,\mathbf{n})\big)\geq C\log n$ for some $C>0$. The same lower bound was found independently by \citet{pemper94}, in the case of exponential passage times.


\paragraph{Classical limit theorems on 1-dimensional graphs}

In this paper we consider first-passage percolation on essentially 1-dimen\-sion\-al periodic graphs defined as follows.

\begin{df}\label{df:1dimG}
The class of \emph{essentially 1-dimensional periodic graphs} consists of all connected graphs $\G$ that can be constructed in the following manner. Let $\{\G_n\}_{n\in\Z}$ be a sequence of identical copies of some finite connected graph, each with vertex set $\Vb_{\G_n}=\{v_{n,1},\ldots,v_{n,K}\}$ and edge set $\Eb_{\G_n}=\{e_{n,1},\ldots,e_{n,L}\}$. Fix a nonempty set $J\subset\{(i,j):1\leq i,j\leq K\}$, and connect $\G_n$ to $\G_{n+1}$ for each $n$ by adding an edge $e(v_{n,i},v_{n+1,j})$ between $v_{n,i}$ and $v_{n+1,j}$, for each $(i,j)\in J$.
We will write $\Eb_{\G_n}^\ast$ for $\Eb_{\G_n}\cup\{e(v_{n,i},v_{n+1,j}):(i,j)\in J\}$, and say that a vertex $v$ in the resulting graph $\G=(\Vb,\Eb)$ is at \emph{level $n$} if $v\in\Vb_{\G_n}$.
\end{df}

\begin{figure}[htbp]
\begin{center}
\includegraphics[width=0.9\textwidth]{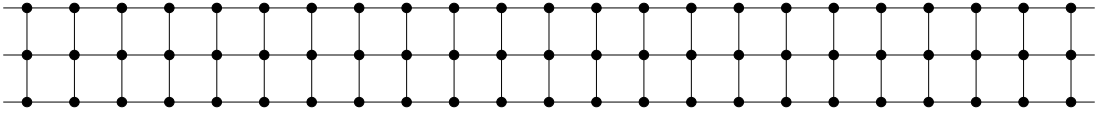}
\end{center}
\caption{
A graph of particular interest is the $\Z\times\{0,1,\ldots,K-1\}^{d-1}$ nearest neighbour graph, referred to as the $(K,d)$-tube, and here illustrated for $K=3$ and $d=2$.
}
\label{fig:3tube}
\end{figure}

Our main results concern first-passage percolation on any essentially 1-dimensional periodic graph $\G$, with passage-time distribution that does not concentrate all mass at a single point. In what follows $v_{n,i}$ will denote an arbitrary vertex at level $n$. We will prove that there are non-negative finite constants $\mu=\mu(\G,\Pt)$ and $\sigma=\sigma(\G,\Pt)$, such that the following holds.

\begin{thm}[Law of large numbers]\label{int:LLN}
If $\E[\tau_e]<\infty$, then
\be\label{eq:int:LLN}
\lim_{n\raw\infty}\frac{T(I,v_{n,i})}{n}=\mu,\quad\text{almost surely}.
\ee
If $\E[\tau_e^r]<\infty$ for some $r\geq1$, then $\big\{\big(T(I,v_{n,i})/n\big)^r\big\}_{n\geq1}$ is uniformly integrable and the convergence of (\ref{eq:int:LLN}) holds also in $L^r$.
\end{thm}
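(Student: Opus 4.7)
The strategy is to exploit the translation invariance induced by the periodicity of $\G$ to uncover a regenerative structure for the first-passage process, and then derive the result from the strong law of large numbers for i.i.d.\ sequences together with elementary renewal theory. I would first construct an increasing sequence of random \emph{regeneration levels} $0<\rho_1<\rho_2<\cdots$ together with distinguished vertices $\hat v_{\rho_k}$ at level $\rho_k$ enjoying the following bottleneck property: every minimum-weight path from $I$ to any vertex at level $\geq\rho_k$ passes through $\hat v_{\rho_k}$, while $T(I,\hat v_{\rho_k})$ depends only on passage times of edges strictly to the left of level $\rho_k$. Periodicity of $\G$ and independence of passage times on disjoint edge sets then make the pairs $(\rho_{k+1}-\rho_k,\,T(I,\hat v_{\rho_{k+1}})-T(I,\hat v_{\rho_k}))$, $k\geq1$, an i.i.d.\ sequence.

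The hardest step is to verify that such regeneration levels occur with positive density; concretely, that $\E[\rho_2-\rho_1]<\infty$, and, under the stronger hypothesis $\E[\tau_e^r]<\infty$, that the $r$-th moment of the block travel time $T(I,\hat v_{\rho_2})-T(I,\hat v_{\rho_1})$ is also finite. Because $\Pt$ is non-degenerate, one can construct on any sufficiently long stretch of levels an event, depending only on the edge weights in that stretch, which forces a single distinguished low-weight path through the tube to beat every alternative path by a margin larger than the largest passage time in the stretch; any such stretch then produces a regeneration at its right-hand endpoint. Positive probability per stretch gives geometric tails for $\rho_2-\rho_1$, and a matching moment bound on the sum of passage times inside a block delivers the required higher-moment estimates.

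Granted the regenerative structure, the i.i.d.\ strong law yields $T(I,\hat v_{\rho_k})/k\to\mu_1:=\E[T(I,\hat v_{\rho_2})-T(I,\hat v_{\rho_1})]$ almost surely, while the elementary renewal theorem gives $\rho_k/k\to\mu_0:=\E[\rho_2-\rho_1]$ almost surely, so that $T(I,\hat v_{\rho_k})/\rho_k\to\mu:=\mu_1/\mu_0$. To pass from the regeneration subsequence to all $n$ and all $v_{n,i}$, I would sandwich for $\rho_k\leq n<\rho_{k+1}$,
\[
T(I,\hat v_{\rho_k})\;\leq\;T(I,v_{n,i})\;\leq\;T(I,\hat v_{\rho_{k+1}})+\Delta_{k+1},
\]
where $\Delta_{k+1}$ is the sum of all passage times within the block between levels $\rho_k$ and $\rho_{k+1}$. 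Since $\Delta_{k+1}$ has finite mean and $\rho_k/n\to 1$ almost surely, we conclude that $T(I,v_{n,i})/n\to\mu$ almost surely.

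For the $L^r$ statement, fix a deterministic path $\gamma_n$ from $I$ to $v_{n,i}$ of length at most $Cn$ for some $C=C(\G)$, available because of 1-dimensionality and periodicity. Then
\[
0\;\leq\;\frac{T(I,v_{n,i})}{n}\;\leq\;\frac{1}{n}\sum_{e\in\gamma_n}\tau_e\;\leq\;C\cdot\frac{1}{|\gamma_n|}\sum_{e\in\gamma_n}\tau_e,
\]
and the right-hand side is $C$ times a Cesaro mean of i.i.d.\ copies of $\tau_e$, which converges to $C\,\E[\tau_e]$ in $L^r$ by the classical $L^r$ law of large numbers (e.g.\ via truncation). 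Hence $(T(I,v_{n,i})/n)^r$ is dominated by a uniformly integrable family, and combined with the a.s.\ convergence established above this yields convergence in $L^r$.
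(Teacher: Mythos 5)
Your proposal follows essentially the same route as the paper: the construction of regeneration levels via a positive-probability local event forcing a bottleneck vertex, the i.i.d.\ structure of the pairs $(\rho_{k+1}-\rho_k,\,T(I,\hat v_{\rho_{k+1}})-T(I,\hat v_{\rho_k}))$ with geometric tails for the spacings, and the combination of the i.i.d.\ strong law with elementary renewal theory and an interpolation bound between consecutive regeneration levels. The one step where you genuinely diverge is uniform integrability: the paper bounds $\tfrac1n T(\hat v_{\rho_0},\hat v_{\rho_{\nu(n)}})$ by subadditivity along the regeneration blocks, $\tfrac1n\sum_{j=1}^n T(\hat v_{\rho_{j-1}},\hat v_{\rho_j})$, and invokes uniform integrability for i.i.d.\ sums, whereas you dominate $T(I,v_{n,i})/n$ by the Ces\`aro mean of the weights along a fixed deterministic path of length $O(n)$ and use the classical $L^r$ law of large numbers. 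Your variant is valid and arguably more elementary, since it bypasses the moment estimates on the block travel times (the paper's Proposition on moments of $\tau_{S_k}$) for this particular step; the paper's version has the advantage of extending to the weaker moment condition on $Y_p$ mentioned in its remarks, which your deterministic single-path bound does not capture.
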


\begin{thm}[Central limit theorem]\label{int:CLT}
If $\E[\tau_{e}^{2}]<\infty$, then
$$
\frac{T(I,v_{n,i})-\mu n}{\sigma\sqrt{n}}\stackrel{d}{\raw}\chi,\quad\text{in distribution},
$$
as $n\raw\infty$, where $\chi$ has a standard normal distribution.
\end{thm}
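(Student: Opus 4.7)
The plan is to base the proof on the regenerative structure announced in the introduction. I would first identify a random increasing sequence of levels $\rho_1<\rho_2<\cdots$ together with distinguished vertices $\hat v_{\rho_k}$ such that every minimal-weight path from $I$ to any vertex at level $\geq\rho_k$ is forced to pass through $\hat v_{\rho_k}$, and such that, after the first regeneration, the block data
\[
(L_k,W_k)\;:=\;\bigl(\rho_k-\rho_{k-1},\,T(\hat v_{\rho_{k-1}},\hat v_{\rho_k})\bigr)
\]
form an i.i.d.\ sequence for $k\geq 2$. The existence of such levels should follow from the periodicity of $\G$ combined with the i.i.d.\ edge weights: for each block one can design a positive-probability ``funnelling'' event, depending only on the edges of a bounded window of consecutive $\G_n$-blocks, whose occurrence forces every global geodesic crossing that window to pass through a single local vertex. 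Since this event has probability bounded away from $0$ uniformly in the past, the inter-regeneration gap $L_k$ has a geometrically decaying tail and hence finite moments of all orders; bounding $W_k$ above by the passage time along a deterministic path of length $O(L_k)$ joining two consecutive regeneration vertices and applying Cauchy--Schwarz then yields $\E[W_k^2]<\infty$ whenever $\E[\tau_e^2]<\infty$.

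With the i.i.d.\ block decomposition in hand, the remainder of the argument is essentially renewal--reward theory. Let $N(n):=\max\{k:\rho_k\leq n\}$, so that after telescoping
\[
T(I,v_{n,i})\;=\;T(I,\hat v_{\rho_1})\;+\;\sum_{k=2}^{N(n)}W_k\;+\;T(\hat v_{\rho_{N(n)}},v_{n,i}).
\]
The first term is $O_P(1)$ and the final overshoot is bounded in $L^2$, so both are $o_P(\sqrt n)$. The renewal strong law gives $N(n)/n\to 1/\E[L_2]$ almost surely, and Anscombe's theorem applied to the i.i.d.\ sequence $W_k-\mu L_k$, with $\mu:=\E[W_2]/\E[L_2]$, yields
\[
\frac{1}{\sqrt n}\sum_{k=2}^{N(n)}(W_k-\mu L_k)\stackrel{d}{\raw}\mathcal N(0,\sigma^2)
\]
for a variance $\sigma^2$ computable from the joint second moments of $(W_2,L_2)$ and $\E[L_2]$. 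Pairing the random sum $\sum W_k$ with its natural centring $\sum(W_k-\mu L_k)$ absorbs the fluctuation $N(n)-n/\E[L_2]$ into a single Gaussian limit, and the stated CLT follows, with $\mu$ and $\sigma$ coinciding with the constants already identified in Theorem~\ref{int:LLN}.

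The main obstacle I anticipate is the rigorous construction of the regeneration levels, in particular the design of local events that simultaneously (i) depend on the edges of only a bounded window of blocks, so that the resulting inter-regeneration data are genuinely independent, and (ii) provide the strong global conclusion that every geodesic to any sufficiently distant vertex is forced through the chosen local vertex. Once this ``local implies global'' dichotomy is set up, everything else is standard renewal/Anscombe machinery, and the choice of the target vertex $v_{n,i}$ within level $n$ contributes only an $O(1)$ correction that cannot affect the limit.
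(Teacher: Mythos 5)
Your proposal follows essentially the same route as the paper: your ``funnelling'' events are precisely the events $A_n$ of Section~\ref{sec:patch} (Lemma~\ref{lma:Ank} supplies the local-implies-global dichotomy, Lemma~\ref{lma:patch} the i.i.d.\ block structure, Proposition~\ref{prop:pmoments} the moment bounds), and the paper likewise concludes by centring $\tau_{S_k}-\mu S_k$ and applying Anscombe's theorem at a stopping time, with the overshoot terms killed by Lemma~\ref{lma:approx}. The one point where your sketch is lighter than the paper is the second-moment bound on $W_k$: the edge weights between consecutive regeneration points are biased by the very conditioning that defines the regeneration structure, which the paper handles by dominating $\tau_e$ by modified weights $\sigma_e$ that are independent of that structure before carrying out the analogue of your Cauchy--Schwarz step.
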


Let $\mathcal{L}(\{x_n\}_{n\geq1})$ denote the set of limit points of a real-valued sequence $\{x_n\}_{n\geq1}$.

\begin{thm}[Law of the iterated logarithm]\label{int:LIL}
If $\E[\tau_{e}^{2}]<\infty$, then
$$
\mathcal{L}\left(\left\{\frac{T(I,v_{n,i})-\mu n}{\sigma\sqrt{2n\log\log n}}\right\}_{n\geq3}\right)=[-1,1],\quad\text{almost surely}.
$$
In particular, almost surely,
$$
\limsup_{n\raw\infty}\frac{T(I,v_{n,i})-\mu n}{\sigma\sqrt{2n\log\log n}}=1,\quad\text{and}\quad\liminf_{n\raw\infty}\frac{T(I,v_{n,i})-\mu n}{\sigma\sqrt{2n\log\log n}}=-1.
$$
\end{thm}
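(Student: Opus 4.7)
The plan is to lift the classical Hartman--Wintner law of the iterated logarithm (LIL) from an embedded i.i.d.\ sequence to the process $\{T(I,v_{n,i})\}_{n\geq1}$ using the regenerative decomposition highlighted in the introduction. Let $0=\rho_0<\rho_1<\rho_2<\ldots$ be the regeneration levels, let $\hat v_{\rho_k}$ denote the canonical vertex singled out at level $\rho_k$, and set
$$
N_k:=\rho_k-\rho_{k-1},\qquad X_k:=T(I,\hat v_{\rho_k})-T(I,\hat v_{\rho_{k-1}}),\qquad k\geq1.
$$
The regeneration property makes $(N_k,X_k)_{k\geq1}$ an i.i.d.\ sequence (modulo an innocuous first entry), and the expected exponential tails of the inter-regeneration distances ensure that $\E[\tau_e^2]<\infty$ is inherited as $\E[X_1^2]+\E[N_1^2]<\infty$. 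Writing $Y_k:=X_k-\mu N_k$ for the centred block increments, where $\mu=\E[X_1]/\E[N_1]$ is the time constant of Theorem~\ref{int:LLN}, one has $\E[Y_1]=0$, and a short identification confirms that $\sigma^2=\Var(Y_1)/\E[N_1]$ coincides with the constant of Theorem~\ref{int:CLT}.

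The Hartman--Wintner theorem applied to $\{Y_k\}_{k\geq1}$ yields, almost surely,
$$
\mathcal L\!\left(\left\{\frac{Y_1+\cdots+Y_m}{\sqrt{2m\log\log m}}\right\}_{m\geq3}\right)=\bigl[-\sigma\sqrt{\E[N_1]},\,\sigma\sqrt{\E[N_1]}\bigr].
$$
Setting $\nu(n):=\max\{k:\rho_k\leq n\}$, the strong law gives $\nu(n)/n\to 1/\E[N_1]$ almost surely, and one decomposes
$$
T(I,v_{n,i})-\mu n \;=\; \sum_{k=1}^{\nu(n)}Y_k \;+\; R_n,
$$
where the remainder $R_n$ combines the initial piece $T(I,\hat v_{\rho_0})$, the overshoot $T(I,v_{n,i})-T(I,\hat v_{\rho_{\nu(n)}})$, and the drift correction $\mu(\rho_{\nu(n)}-n)$. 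Each of these is dominated by data from at most one regeneration block, and since block statistics have finite second moments, a standard maximal-block argument together with Borel--Cantelli forces $R_n=o(\sqrt{n\log\log n})$ almost surely. Combining the LIL for $\{Y_k\}$ with a random-time-change (Anscombe--Heyde style), using $\nu(n)\sim n/\E[N_1]$ and $\log\log\nu(n)\sim\log\log n$, rescales the interval of limit points by $1/\sqrt{\E[N_1]}$, so that the quotient in the theorem has $[-1,1]$ as its set of limit points almost surely.

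The main obstacle lies in the random-time-change step, and specifically in recovering the \emph{entire} interval $[-1,1]$ of limit points rather than only its endpoints $\pm1$. For the $\limsup$ and $\liminf$ assertions one only needs a crude estimate on $\nu(n)-n/\E[N_1]$, which itself follows from a second application of the classical LIL to the i.i.d.\ sequence $\{N_k-\E[N_1]\}$. To attain every intermediate value $x\in[-1,1]$, one argues that any subsequence along which $(Y_1+\cdots+Y_m)/\sqrt{2m\log\log m}$ approaches $x\sigma\sqrt{\E[N_1]}$ can be reindexed by $m=\nu(n)$ without losing this property, because the reindexing discrepancy is of lower order than $\sqrt{n\log\log n}$ by the above estimate on $|\nu(n)-n/\E[N_1]|$. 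Finally, the ambiguity in the choice of $v_{n,i}$ within a level is absorbed into $R_n$, since any two vertices at a common level differ in travel time by a quantity with the same moment control as a single regeneration block.
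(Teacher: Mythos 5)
Your proposal is correct and follows essentially the same route as the paper: decompose $T(I,v_{n,i})-\mu n$ into a stopped sum of the i.i.d.\ centred block increments $\tau_{S_k}-\mu S_k$ plus a remainder, apply the (extended) Hartman--Wintner LIL to the blocks, transfer the full set of limit points through the random time change using that $\nu(n)$ is non-decreasing and hits every integer (so $\nu(n)/n\to1/\mu_S$ and $\log\log\nu(n)/\log\log n\to1$ suffice), and kill the overshoot and initial terms by a Borel--Cantelli argument exactly as in the paper's Lemma~\ref{lma:approx}. The only cosmetic difference is that you invoke an LIL for $\{S_k\}$ to control $|\nu(n)-n/\mu_S|$, which is harmless but unnecessary: the strong law and the surjectivity of $\nu$ already do the job.
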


\begin{remark}
\emph{a)} We have at this stage preferred to state simple moment conditions, but we point out that they can be relaxed somewhat in certain cases (see Remark~\ref{rem:momentcond}).
\emph{b)} As a consequence of the regenerative structure explored in Section \ref{sec:patch}, $\mu$ and $\sigma$ are in \eqref{def:timeconstant} given by explicit formulas. It will there be clear that their values may depend on $\G$ and $\Pt$, but not on $I$ or $i$.
\emph{c)} Since $T(I,v_{n,i})$ differs from $\min_{v\in\Vb_{\G_n}}T(I,v)$ and $\max_{v\in\Vb_{\G_n}}T(I,v)$ by at most a finite number of passage times, the conclusions in Theorem~\ref{int:LLN},~\ref{int:CLT}, and~\ref{int:LIL} hold also for these quantities.
\end{remark}

The almost sure and $L^1$-convergence in Theorem~\ref{int:LLN} could easily be derived from the Subadditive Ergodic Theorem. However, it is for the understanding of our approach instructive to give an alternative proof. As the classical Central Limit Theorem for i.i.d.\ sequences extends to Donsker's theorem, Theorem~\ref{int:CLT} also extends to a functional version (see Theorem~\ref{thm:Donsker}).

At a comparison with higher dimensions, Theorem \ref{int:LLN} is the 1-dimensional analogue to the Shape Theorem. Theorems \ref{int:CLT} and \ref{int:LIL} on the other hand, point out a 1-dimensional behaviour that is not generally expected in higher dimensions. Indeed, Theorem~\ref{int:LIL} gives the precise order of fluctuations around the time constant. Restrict for a moment attention to $(K,d)$-tubes, and let $\mu_K$ and $\sigma_K$ denote the constants associated therewith. In comparison with~\eqref{eq:shapeinclusion}, Theorem~\ref{int:LIL} implies that if 
$
B^\ast(t)=[-\mu_K^{-1},\mu_K^{-1}]\times[0,K/t]^{d-1},
$
then for every $\lambda>\sigma_K\sqrt{2/\mu_K}$, almost surely,
$$
\big(1-\lambda\sqrt{t^{-1}\log\log t}\big)B^\ast(t)\,\subset\,\frac{1}{t}B_t\,\subset\,\big(1+\lambda\sqrt{t^{-1}\log\log t}\big)B^\ast(t)
$$
for large enough $t$. Moreover, both inclusions fail for $\lambda<\sigma_K\sqrt{2/\mu_K}$. The claim follows from a straightforward inversion argument, which can be found in \cite{A11thesis}.

Theorem~\ref{int:CLT} was also obtained by \cite{schlemm11} in the particular case of the $(2,2)$-tube with exponential passage times. A variant of Theorem~\ref{int:CLT} was proved by yet other means in an independent work by \cite{chadey09}. They consider first-passage percolation on the $(K,d)$-tube, where $K=K(n)$ is allowed to depend on $n$. Their main result says that for $r>2$ there exists $\alpha=\alpha(d,r)$ such that if $\E[\tau_e^r]<\infty$ and $K(n)=o(n^\alpha)$, then the sequence $\{T(\mathbf{0},\mathbf{n})\}_{n\geq1}$ continues to obey a central limit theorem. This result extends our Theorem~\ref{int:CLT}, but assumes a slightly stronger moment condition.
We emphasise that the present work was prepared simultaneously and independently of theirs, and appeared in part already as \cite{A08}, where Theorem~\ref{int:CLT} is featured.

\paragraph{Monotonicity, geodesics, and couplings}

We now return to consider arbitrary essentially 1-dimensional periodic graphs. It seems natural to believe that the expected travel time increases with the distance. A peculiar observation by \cite{vandenberg83} indicates that monotonicity of $\E[T(I,v_{n,i})]$ may not hold for all $n$. We prove that it does for large $n$.

\begin{thm}\label{int:meanT}
Let $\E[\tau_e]<\infty$. There exists $C=C(i,I,\G,\Pt)\in\R$ such that, as $n\raw\infty$,
$$
\E[T(I,v_{n,i})]\,=\,\mu n+C+o(1).
$$
\end{thm}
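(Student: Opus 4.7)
The approach exploits the regenerative decomposition developed in Section~\ref{sec:patch}, which supplies a sequence of random regeneration levels $0 \leq \rho_0 < \rho_1 < \rho_2 < \cdots$ together with patch passage times $\eta_k$ such that the pairs $(\xi_k, \eta_k) := (\rho_k - \rho_{k-1},\, \eta_k)$, $k \geq 1$, are i.i.d.\ and independent of the initial pair $(\rho_0, \eta_0)$; moreover, $\mu = \E[\eta_1]/\E[\xi_1]$. Essential for what follows is that $\xi_1$ has exponential tails and is aperiodic, independently of any moment assumption on $\tau_e$.

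Setting $N(n) := \max\{k \geq 1 : \rho_k \leq n\}$, I would write
$$
T(I, v_{n,i}) \;=\; \eta_0 + \sum_{k=1}^{N(n)} \eta_k + R_n,
$$
where $R_n$ denotes the remaining piece from level $\rho_{N(n)}$ up to the vertex $v_{n,i}$. Since $N(n)+1$ is a stopping time with respect to the filtration generated by $\{(\xi_k, \eta_k)\}_{k\geq 1}$, Wald's identity gives
$$
\E\!\left[ \sum_{k=1}^{N(n)} \eta_k \right] \;=\; \E[\eta_1]\,\E[N(n)] + \E[\eta_1] - \E[\eta_{N(n)+1}].
$$
The refined elementary renewal theorem, which is available because $\xi_1$ has finite second moment and is aperiodic, provides $\E[N(n)] = n/\E[\xi_1] + C_1 + o(1)$ as $n\to\infty$, and hence $\E[\eta_1]\,\E[N(n)] = \mu n + \mu C_1 + o(1)$.

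It remains to verify that the boundary quantities $\E[\eta_0]$, $\E[\eta_{N(n)+1}]$ and $\E[R_n]$ contribute only constants in the limit. The first is a deterministic constant depending on $I$. For the latter two, I would appeal to the key renewal theorem applied to the joint sequence $\{(\xi_k, \eta_k)\}$: the inspection-paradox distribution of $(\xi_{N(n)+1}, n - \rho_{N(n)})$, and consequently the joint distribution of $(\eta_{N(n)+1}, R_n)$, converges weakly to an explicit limit. To upgrade this to convergence in expectation one needs uniform integrability, which follows from dominating $\eta_{N(n)+1}$ and $R_n$ by sums of passage times over a region whose size-biased diameter still has exponential tails, together with the assumption $\E[\tau_e] < \infty$. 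Summing the three contributions yields the claimed expansion $\E[T(I, v_{n,i})] = \mu n + C + o(1)$.

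The main technical obstacle is precisely this passage from distributional to expectational convergence of the boundary terms: under the weak moment hypothesis $\E[\tau_e] < \infty$ alone, size-biasing could in principle destroy integrability, and one must leverage the exponential tails of the regeneration increments from Section~\ref{sec:patch} to preserve uniform integrability. The aperiodicity of $\xi_1$ on a general essentially 1-dimensional periodic graph is a second delicate structural point, but it is a property of the regeneration construction rather than of the present argument.
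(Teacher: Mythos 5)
Your outline follows the same broad route as the paper (regenerative decomposition, Wald's identity, and uniform-integrability control of the boundary terms), but it rests on a premise that fails for the regeneration structure actually constructed in Section~\ref{sec:patch}: the increments $\xi_k=S_k$ are \emph{not} aperiodic. By construction the events $A_{n_k}$ are only tested at levels $n_k$ spaced $2M+1$ apart, so $S_k/(2M+1)$ is geometric and $S_k$ is supported on the lattice $(2M+1)\Z_+$, with span $2M+1>1$ whenever $M\geq1$ (which is the generic case, since $M>\tlow L/(\thi-\tlow)$). Consequently the refined elementary renewal theorem and the key renewal theorem, as you invoke them, only yield expansions of $\E[N(n)]$ and of the overshoot/boundary distributions along each residue class of $n$ modulo $2M+1$, with a priori \emph{different} constants for different residues. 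Your argument therefore proves $\E[T(I,v_{n,i})]=\mu n+C_{n\bmod(2M+1)}+o(1)$, and an additional argument is needed to show the $2M+1$ constants coincide; deferring aperiodicity to ``the regeneration construction'' does not close this, because the construction genuinely does not provide it.

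The paper resolves exactly this point by randomizing the phase: in Section~\ref{sec:monT} the shift $\Delta$ is taken uniform on $\{0,1,\ldots,2M\}$, which makes a regeneration equally likely to sit at any level. This gives $\E[\nu(n)]=n/\muS$ \emph{exactly} (Lemma~\ref{lma:meannu}), so no renewal theorem is needed at all, and it makes the law of the overshoot $\rho_{\nu(n)}-n$ independent of $n$, so the boundary term $\E[T(I,v_{n,i})-T(I,\hat v_{\rho_{\nu(n)}})]$ converges to a single constant via convergence in distribution plus the uniform-integrability bound you correctly anticipate (domination by a sum over the last regeneration block, whose length has exponential tails, combined with $\E[Y_p]\leq\E[\tau_e]<\infty$). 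Either adopt this phase randomization or supply a separate argument identifying the residue-class constants; as written, the proof has a gap at the renewal-theoretic step.
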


A direct consequence of this is that $\E[T(I,v_{n+1,i})-T(I,v_{n,i})]\raw\mu$ as $n\raw\infty$. Since $\mu>0$, this proves monotonicity of $\E[T(I,v_{n,i})]$ for large $n$. More remarkable than monotonicity of expected travel times is the fact that their fluctuations are also monotone for large $n$.

\begin{thm}\label{int:varT}
Let $\E[\tau_e^2]<\infty$. There exists $C'=C'(i,I,\G,\Pt)\in\R$ such that, as $n\raw\infty$,
$$
\Var\big(T(I,v_{n,i})\big)\,=\,\sigma^2n+C'+o(1).
$$
\end{thm}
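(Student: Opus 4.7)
My plan is to build on the regenerative decomposition established in Section~\ref{sec:patch} and carry out a renewal-reward variance expansion with one term of precision beyond the central limit result of Theorem~\ref{int:CLT}. Denote the regeneration levels by $\{\rho_k\}_{k\geq 0}$, and for $k\geq 1$ set $X_k := T(\hat{v}_{\rho_{k-1}},\hat{v}_{\rho_k})$ and $L_k := \rho_k-\rho_{k-1}$; by the regenerative property, $\{(X_k,L_k)\}_{k\geq 1}$ is an i.i.d.\ sequence, independent of the initial-patch data $Y_0 := T(I,\hat{v}_{\rho_0})$ and $\rho_0$. Setting $N_n := \max\{k:\rho_k\leq n\}$, one decomposes
$$T(I,v_{n,i}) - \mu n \;=\; (Y_0 - \mu\rho_0) \;+\; \sum_{k=1}^{N_n}\big(X_k - \mu L_k\big) \;+\; \big(R_n - \mu(n-\rho_{N_n})\big),$$
where $R_n := T(\hat{v}_{\rho_{N_n}},v_{n,i})$. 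The three summands represent the initial patch, a centred renewal-reward sum, and the terminal residual.

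The bulk of the proof is a careful variance computation for this three-term decomposition. For the middle term, the assumption $\E[\tau_e^2]<\infty$ transfers to $X_1,L_1\in L^2$ (by the same patch-size tail estimates that drive Theorem~\ref{int:CLT}), and classical renewal-reward theory then yields $\Var\big(\sum_{k=1}^{N_n}(X_k-\mu L_k)\big) = \sigma^2 n + c_1 + o(1)$ with $\sigma^2 = \Var(X_1-\mu L_1)/\E[L_1]$, agreeing with the scaling constant identified in Theorem~\ref{int:CLT}. For the boundary terms, the key renewal theorem applied to the delayed renewal process $\{\rho_k\}$ gives that the joint law of the spent lifetime $n-\rho_{N_n}$ and the overshooting block $(X_{N_n+1},L_{N_n+1})$ converges to its stationary size-biased limit; in particular $\Var(R_n-\mu(n-\rho_{N_n}))$ converges to a finite constant $c_2$, and, since $Y_0$ is independent of $\{(X_k,L_k)\}_{k\geq 1}$, the covariance between the initial term and the middle sum reduces to a one-variable calculation that also stabilises.

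The main obstacle will be the covariance between the renewal-reward sum and the terminal residual, since both depend on $N_n$ and on the block straddling level $n$. I would attack it by conditioning on the backbone $(\rho_0,L_1,L_2,\ldots)$: given this information, $N_n$ and $\rho_{N_n}$ are deterministic, the middle sum and $R_n$ become conditionally independent, and the conditional covariance is carried by the common dependence on the spent lifetime $n-\rho_{N_n}$. Averaging back over the renewal backbone, invoking the stationary convergence of $(n-\rho_{N_n},L_{N_n+1})$, and using the $L^2$ moment bounds to justify uniform integrability of the conditional second moments, should show that this cross-covariance converges to an explicit constant. Summing the three variances and three covariances then gives $\Var\big(T(I,v_{n,i})\big) = \sigma^2 n + C' + o(1)$, with $C'$ depending on $(i,I,\G,\Pt)$ only through the initial-patch law and the stationary joint distribution of $(X_1,L_1)$.
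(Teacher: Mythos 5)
Your overall architecture --- regenerative decomposition into initial patch, centred block sum, and terminal residual, followed by a term-by-term variance/covariance analysis --- is the same as the paper's. But there are two related gaps, and they sit exactly where the real work of this theorem lies. First, you assert that ``classical renewal-reward theory'' yields $\Var\big(\sum_{k=1}^{N_n}(X_k-\mu L_k)\big)=\sigma^2 n+c_1+o(1)$. It does not: the standard stopped-random-walk results (e.g.\ Gut, Theorem 4.2.4, which the paper cites) give only $\sigma^2 n+o(n)$, and the paper explicitly flags the upgrade from $o(n)$ to $C'+o(1)$ as the essential additional effort of this section. To get the exact constant one needs $\E$ of the (stopped) block count to be \emph{exactly} linear in $n$, so that Wald's second identity produces $\sigma^2 n$ with no error term; moreover $N_n=\max\{k:\rho_k\le n\}$ is not a stopping time, which further obstructs a direct Wald argument (the paper instead uses the overshooting stopping time $\nu(n)$).

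Second, and underlying the first: the regeneration levels form an \emph{arithmetic} renewal process --- the $S_k$ are multiples of $2M+1$ --- so the key renewal theorem gives convergence of the law of the spent lifetime $n-\rho_{N_n}$ and of the straddling block only as $n\to\infty$ through a fixed residue class mod $2M+1$, with limits that a priori depend on the residue. Your appeals to ``the stationary size-biased limit'' for the terminal residual, for the cross-covariances, and implicitly for the linear growth of $\E[N_n]$, are therefore not available for all $n$; run as written, your argument would at best produce $\sigma^2 n+C'_{n\bmod(2M+1)}+o(1)$ with possibly distinct constants, which is weaker than the statement. The paper's resolution is the auxiliary uniform shift $\Delta$ on $\{0,\ldots,2M\}$ built into the definition of the $n_k$: it makes regeneration equally likely at every level, whence $\E[\nu(n)]=n/\muS$ exactly (Lemma~\ref{lma:meannu}) and the overshoot law is independent of $n$, so every boundary term converges to a single $n$-free limit. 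Since the left-hand side $\Var(T(I,v_{n,i}))$ does not depend on the auxiliary randomisation, a single constant $C'$ results. You need either this device or a second-order lattice renewal expansion together with a separate argument that the residue-class constants coincide; your proposal supplies neither. The conditioning-on-the-backbone idea for the cross-covariance is reasonable in spirit (the paper instead uses a time-reversal of the block sequence plus Wald's lemma), but it inherits the same lattice problem.
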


It is interesting to relate the constants $\mu_K$ and $\sigma_K$ for the $(K,d)$-tube to similar quantities for the lattice. For fixed $d$, a trivial coupling argument shows that $\mu_K\geq\mu_{K+1}$. In fact strict inequality holds for all $K\geq1$ (see Proposition \ref{prop:muKdecreasing}), and in Proposition \ref{prop:muKlimit} we prove that
$$
\lim_{K\raw\infty}\mu_K=\mu(\mathbf{e}_1).
$$
Although it seems likely, we have not been able to verify that $\lim_{K\to\infty}\sigma_K=0$.

Apart from studying the temporal progression of a first-passage process, also describing its spatial history have received considerable attention. A simple argument showing that the infimum $T(u,v)$ is attained for some path $\gamma(u,v)$ is given in Proposition \ref{prop:geodesic}. As customary, we will use the term \emph{geodesic} to refer to the path $\gamma(u,v)$ attaining the minimal passage time. Geodesics are not necessarily unique when the passage-time distribution has atoms. For this reason, fix a deterministic rule to choose one when several are possible (e.g.\ the shortest, with some additional rule for breaking ties). Let $N(u,v)$ denote the length of the geodesic between $u$ and $v$, and similarly between finite sets $U$ and $V$. The existence of a strong law for the length of geodesics on the $\Z^d$ lattice is known only in the 'supercritical' case $\Pt(0)>p_c(d)$ (see \cite{zhazha84} and \cite{garmar04}). In the 1-dimensional case, the asymptotics presented for $\{T(I,v_{n,i})\}_{n\geq1}$ hold 
also for $\{N(I,v_{n,i})\}_{n\geq1}$, and are proven analogously. Notably, there are no moment assumptions involved. The details are left to the reader.

\begin{thm}\label{int:geodesic}
Consider first-passage percolation on any essentially 1-dimensional periodic graph. Conclusions analogous to those of Theorem \ref{int:LLN}, \ref{int:CLT}, \ref{int:LIL}, \ref{int:meanT}, \ref{int:varT}, and~\ref{thm:Donsker} hold for the sequence $\{N(I,v_{n,i})\}_{n\geq1}$ (but for different constants).
\end{thm}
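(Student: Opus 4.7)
The plan is to mirror, step by step, the regenerative argument that Section~\ref{sec:patch} will develop for $\{T(I,v_{n,i})\}_{n\geq 1}$, but with passage-time contributions replaced by edge counts. The machinery for $T$ will rest on the construction of a (random) increasing sequence of regeneration levels $\rho_1<\rho_2<\cdots$ with the property that the minimum-weight path from $I$ to any vertex past level $\rho_k$ is forced to cross level $\rho_k$ through a prescribed vertex, so that the portion of the geodesic $\gamma(I,v_{n,i})$ lying between levels $\rho_k$ and $\rho_{k+1}$ is a functional of edge weights in that strip only. Since this decoupling statement is geometric and not tied to the specific weight functional, it applies just as well to the length of the geodesic as to its weight.

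I would write $X_k$ for the number of edges of $\gamma(I,v_{n,i})$ lying strictly between levels $\rho_k$ and $\rho_{k+1}$, and let $M(n):=\max\{k:\rho_k\leq n\}$ denote the associated renewal count. By the regenerative property, $X_1,X_2,\ldots$ are i.i.d.\ and
$$
N(I,v_{n,i})\,=\,R_0+\sum_{k=1}^{M(n)}X_k+R_n,
$$
where $R_0$ and $R_n$ are boundary contributions depending only on finitely many edges near $I$ and near $v_{n,i}$. This is the precise analogue of the additive decomposition that will drive the proofs of Theorems~\ref{int:LLN}--\ref{int:varT} for $T$.

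The key observation, and the reason no moment assumption on $\tau_e$ is required, is that $X_k$ will be bounded deterministically in terms of the regeneration gap. Because passage times are nonnegative, a geodesic never reuses an edge, so $X_k$ is at most the total number of edges in the strip between levels $\rho_k$ and $\rho_{k+1}$; by the periodicity in Definition~\ref{df:1dimG}, this total is bounded by a graph-dependent constant times $\rho_{k+1}-\rho_k$. Since the construction of regeneration levels only requires that $\Pt$ is not a point mass, the increments $\rho_{k+1}-\rho_k$ will turn out to have exponential tails irrespective of any moment of $\tau_e$, and hence $X_k$ will automatically have finite moments of all orders. This is the step I expect to be the only genuine departure from the proofs for $T$.

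With the i.i.d.\ decomposition in hand and the $X_k$ possessing all moments, each of the SLLN, CLT, LIL, Donsker invariance principle, and $o(1)$-expansions for the mean and variance then follows exactly as for $T$: one applies the corresponding classical theorem for i.i.d.\ sequences and controls $M(n)$ by standard renewal theory. The hard part is really just the deterministic bound $X_k\leq C(\rho_{k+1}-\rho_k)$; once that is recorded, every step from the proofs for $T$ transfers to $N$ without modification beyond replacing $\tau_e$-increments by edge counts, so no extra moment hypothesis is incurred.
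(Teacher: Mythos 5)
Your proposal is correct and follows essentially the same route as the paper, which itself only remarks that the results for $\{N(I,v_{n,i})\}_{n\geq1}$ are "proven analogously" via the same regenerative decomposition and leaves the details to the reader. You correctly identify the one genuinely new ingredient — that the length increments between regeneration points are bounded by a constant times the regeneration gaps $S_k$, which have exponential tails by Proposition~\ref{prop:pmoments}\emph{a)} regardless of any moment assumption on $\tau_e$ — which is exactly why the paper notes that no moment hypotheses are needed here.
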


In the final section of the paper we construct a coupling of two first-passage percolation infections in a way that guarantees that they will eventually coincide. As an application of the coupling we prove a 0--1 law. The construction will differ depending on $\Pt$ being continuous or discrete. We refer the reader to Section~\ref{sec:coupling} for the precise statements, and present only the 0--1 law in its continuous form here. Define the $\sigma$-algebra $\T_t:=\sigma(\{B_s\}_{s\geq t})$ and the tail $\sigma$-algebra $\T:=\cap_{t\geq0}\T_t$. We may think of $\T_t$ as the $\sigma$-algebra of events that do not depend on the times at which vertices are infected before time $t$.

\begin{thm}[0--1 law]\label{int:0--1law}
Consider first-passage percolation on an essentially 1-dimensional periodic graph. Assume that the passage time distribution has an absolutely continuous component (with respect to Lebesgue measure). Then $P(A)\in\{0,1\}$, for any event $A\in\T$.
\end{thm}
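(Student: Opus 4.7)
\noindent\emph{Proof plan.} The strategy is to reduce the 0--1 law to an ``independence of $A$ from itself'' argument, using as the key input the coupling of two first-passage processes constructed in Section~\ref{sec:coupling}. Let $A\in\T$. Since $\{B_t\}$ is a measurable function of the i.i.d.\ passage times $\{\tau_e\}_{e\in\Eb}$, the event $A$ lies in $\sigma(\{\tau_e\}_{e\in\Eb})$. The plan is to show that $A$ is independent of $\sigma(\tau_e:e\in F)$ for every finite $F\subset\Eb$. A standard Dynkin $\pi$-$\lambda$ argument then gives independence of $A$ from $\sigma(\{\tau_e\}_{e\in\Eb})$, hence from itself, so that $P(A)=P(A)^2\in\{0,1\}$.

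Fix such a finite $F\subset\Eb$ and build two coupled processes on a common probability space: $\{B_t\}$ driven by the original weights $\{\tau_e\}$, and $\{B_t'\}$ driven by weights $\{\tau_e'\}$ with $\tau_e'=\tau_e$ for $e\notin F$ and $\tau_e'$ an independent resample from $\Pt$ for $e\in F$. Both processes have the correct marginal law, and $\{\tau_e'\}_{e\in F}$ is independent of $\{\tau_e\}_{e\in F}$. Using the coupling of Section~\ref{sec:coupling} I would arrange the joint construction so that $B_t=B_t'$ for every $t\geq\tfx$, for some a.s.\ finite random time $\tfx$. Since $A\in\T_t$ for every fixed $t\geq 0$, the events $\hat A=\{\{B_s\}\in A\}$ and $\hat A'=\{\{B_s'\}\in A\}$ satisfy
$$
\hat A\cap\{\tfx\leq t\}\,=\,\hat A'\cap\{\tfx\leq t\}\quad\text{for every }t\geq 0,
$$
and letting $t\raw\infty$ yields $\hat A=\hat A'$ almost surely. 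But $\hat A'$ is measurable with respect to $\{\tau_e\}_{e\notin F}$ and $\{\tau_e'\}_{e\in F}$, which are jointly independent of $\{\tau_e\}_{e\in F}$. Consequently $\hat A$, and hence $A$, is independent of $\sigma(\tau_e:e\in F)$, as required.

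The hard part is the construction of the coupling: given a finite $F$, the two processes must be made to coincide from some a.s.\ finite time onwards. The intended route is to wait for a regeneration level located strictly beyond the window on which $F$ is supported, and then to use the absolutely continuous component of $\Pt$ to synchronise the two weight environments from that level onwards. At each candidate regeneration level the density overlap afforded by the absolutely continuous component provides a fixed positive-probability ``maximal coupling'' of fresh weights in the two systems, so that independent repeated attempts succeed in finite time and $\tfx<\infty$ almost surely. Arranging a joint regeneration level for both processes simultaneously, and handling the small temporal offset between them at the point of gluing, are the delicate steps to be carried out in Section~\ref{sec:coupling}; the discrete case, which fails the density-overlap argument, is handled separately and is what forces the two variants of the coupling theorem alluded to above.
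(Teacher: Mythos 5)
Your overall strategy is genuinely different from the paper's: you aim for a Kolmogorov-type argument (independence of $A$ from $\sigma(\tau_e:e\in F)$ for every finite $F\subset\Eb$, followed by a $\pi$--$\lambda$ argument to get $P(A)=P(A)^2$), whereas the paper conditions on the history $\F_t=\sigma(\{B_s\}_{0\leq s\leq t})$, uses the coupling of Proposition~\ref{prop:coupling:c} to show that $P(A\mid\F_t)$ is almost surely constant for each fixed $t$, and concludes with L\'evy's 0--1 law. Your reduction in the first paragraph is sound, and the observation that $A\in\T_t$ for every $t$ forces $\hat A=\hat A'$ once $B_s=B_s'$ for all $s\geq\tfx$ is exactly the step the paper also uses. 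The difficulty lies in your key independence step.

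The gap is this: the prescription ``$\tau_e'=\tau_e$ for $e\notin F$, independent resample on $F$'' is incompatible with ``$B_t=B_t'$ for all large $t$''. If the two weight configurations agree off the finite set $F$, the two infections are generically offset by a nonzero time delay $\Td$ (the passage-time difference accumulated across $F$) forever, so $B_t\neq B_t'$ for all $t$. This is precisely what Lemma~\ref{lma:coupling} and Proposition~\ref{prop:coupling:c} are built to overcome: exact coincidence requires altering the primed weights on an infinite sparse set of edges \emph{outside} $F$, via a random walk whose stopping rule depends on $\Td$ and hence on $\{\tau_e\}_{e\in F}$. Once this is done, $\hat A'$ is measurable with respect to the full primed configuration $\{\tau_e'\}_{e\in\Eb}$, not with respect to $\big(\{\tau_e\}_{e\notin F},\{\tau_e'\}_{e\in F}\big)$, and your independence claim collapses. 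To rescue the argument you would need the stronger coupling property that the entire family $\{\tau_e'\}_{e\in\Eb}$ is independent of $\sigma(\tau_e:e\in F)$; Proposition~\ref{prop:coupling:c} only asserts that its marginal law is the product measure $\Pt^{\Eb}$, and the stronger statement requires a separate verification of how the delay-absorbing construction uses $\Td$. It is plausibly true for the specific construction (conditionally on $\Td$ the resynchronised weights still have i.i.d.\ law), but it is an additional claim that must be proved. The paper's route via L\'evy's 0--1 law sidesteps this entirely: it only invokes the stated properties of the coupling, namely correct marginal laws and eventual coincidence.
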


It is not known in which generality an absolutely continuous component is sufficient for the existence of a 0--1 law. But, we give an example showing that a 0--1 law analogous to Theorem~\ref{int:0--1law} cannot hold on the binary tree. An interesting case to settle would be on the $\Z^d$ lattice.

\paragraph{Outline of paper}

The main results of this paper are based on a `regenerative' nature arising for first-passage percolation on essentially 1-dimensional periodic graphs. What is meant by a regenerative behaviour, and which properties thereof that will recur throughout this paper, is clarified in the next section. Once a regenerative sequence has been identified, its asymptotics can be studied with renewal theory and stopping theory for random walks. Theorem~\ref{int:LLN}, \ref{int:CLT}, and~\ref{int:LIL} are in Section~\ref{sec:results} easily derived from the regenerative behaviour. Monotonicity of mean and variance is considered in Section~\ref{sec:monT}. Section \ref{sec:geodesic} is dedicated to properties related to the time constant. Couplings are finally constructed in Section \ref{sec:coupling}, for both continuous and discrete distributions, and the 0--1 law of Theorem~\ref{int:0--1law} is proved.

\begin{remark}
Some details are left out of this version of the paper in order to keep the presentation concise. A longer version with all details is found in~\cite{A11thesis}.
\end{remark}

\begin{remark}
It was recently pointed out to the author that some of the results in this paper could alternatively be obtained via an approach based on additive functionals of Markov chains. However, in order to clearly prove that the underlying Markov chain (in which $T(I,v_{n,i})$ would be expressed) has the required properties, it seems that the identification of a regenerative event like that in Section~\ref{sec:patch} is necessary. We have therefore preferred the regenerative approach due to its intuitive appeal.
\end{remark}

\section{Regenerative behaviour}\label{sec:patch}

The idea of how to identify a suitable regenerative sequence arises naturally for first-passage percolation with exponentially distributed passage times, which we illustrate below for the $(2,2)$-tube. A regenerative sequence will refer to the following.

\begin{df}\label{def:regseq}
We say that a sequence $\{X_k\}_{k\geq1}$ of random variables is a \emph{regenerative sequence} if there exists an increasing sequence of random variables $\{\lambda_k\}_{k\geq0}$ such that
\begin{enumerate}[\quad a)]
\item $\{\lambda_k-\lambda_{k-1}\}_{k\geq1}$ forms an i.i.d.\ sequence, and
\item $\{X_{\lambda_k}-X_{\lambda_{k-1}}\}_{k\geq1}$ forms a sequence of i.i.d.\ non-negative random variables.
\end{enumerate}
We will refer to $\{\lambda_k\}_{k\geq0}$ as a sequence of \emph{regenerative levels}.
\end{df}

\subsection{Exponential passage times}\label{sec:FPPexp}

Equip the $(2,2)$-tube with i.i.d.\ exponential passage times $\{\tau_e\}_{e\in\Eb}$, and let both vertices at level zero be initially infected. At any fixed time $t$, given the infected component $B_t$, each edge with exactly one endpoint in the infected component is equally likely to be passed by the infection next. Thus, at each level, with probability at least $1/2$, both vertices will become infected before any vertex at the following level. It follows that with probability one, at some level $r$, both vertices will become infected before any vertex at level $r+1$. Denote by $\rho$ the first level for which this happens, and let $\tau_\rho$ denote the time at which this happens. By the lack-of-memory property, the time it takes for the infection from this moment to reach $m$ levels further has the same distribution as the time it would take to reach level $m$, i.e.,
\be\label{eq:expTrho}
T(I,v_{\rho+m,i})-\tau_\rho\stackrel{d}{=}T(I,v_{m,i}).
\ee
In fact, at infinitely many levels, both vertices at that level will be infected before any vertex at higher levels. If we repeat the argument, we generate a sequence of (regenerative) levels $\{\rho_k\}_{k\geq1}$ (see Figure \ref{fig:2tubeinftree}), with corresponding sequence of instants $\{\tau_{\rho_k}\}_{k\geq1}$, such that (\ref{eq:expTrho}) holds.
\begin{figure}[htbp]
\begin{center}
\resizebox{0.9\textwidth}{!}{\input{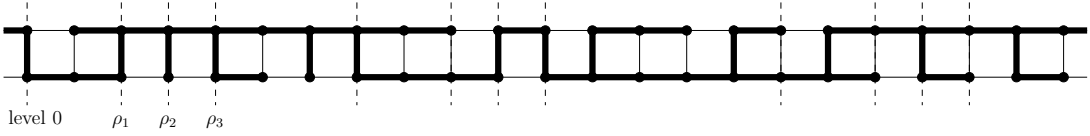}}
\end{center}
\caption{A realisation of the spread of an infection on the $(2,2)$-tube. The broken lines indicate levels at which both vertices will become infected before any vertex ahead.}
\label{fig:2tubeinftree}
\end{figure}
Since passage times are i.i.d., the consecutive differences $\rho_{k+1}-\rho_k$ will be i.i.d., as well as the differences $\tau_{\rho_{k+1}}-\tau_{\rho_k}$. It follows that $\left\{\max_{v\in\Vb_{\G_n}}T(I,v)\right\}_{n\geq1}$ is a regenerative sequence.

The point of the regenerative sequence is the following. Note that the $n$th (regenerative) level and the time at which it occurs may be written as sums of i.i.d.\ random variables, i.e.,
$$
\rho_n=\sum_{k=0}^{n-1}\rho_{k+1}-\rho_k\quad\text{and}\quad\tau_{\rho_n}=\sum_{k=0}^{n-1}\tau_{\rho_{k+1}}-\tau_{\rho_k},
$$
where $\rho_0=0$ and $\tau_{\rho_0}=0$. This link will enable the first-passage process to be studied via the classical theory for i.i.d.\ sequences.

\subsection{The general case}\label{sec:FPPG}

Next, consider first-passage percolation with general passage time distribution on any essentially 1-dimensional periodic graph. In what follows, an edge \emph{at} level $n$ refers to an edge in $\Eb_{\G_n}$. An edge \emph{between} levels $n$ and $n+m$ refers to an edge in $\Eb_{\G_n}^\ast\cup\ldots\cup\Eb_{\G_{n+m-1}}^\ast\cup\Eb_{\G_{n+m}}$. We first define our regenerative event. Let $M$ be a positive integer and denote the set of edges between level $n$ and $n+2M$ by $E_n$. Fix a path $\gamma_n$ of shortest length between $\Vb_{\G_n}$ and $\Vb_{\G_{n+2M}}$, i.e., between two vertices at level $n$ and $n+2M$, respectively. Define the subset $\hat{E}_n$ of $E_n$ as
\be\label{eq:hEn}
\hat{E}_n:=\gamma_n\cup\Eb_{\G_n}\cup\Eb_{\G_{n+2M}}.
\ee
Define
\be\label{eq:mtMt}
\begin{aligned}
\mt&:=\inf\big\{x\geq0:\Pt\big([0,x]\big)>0\big\}\\
\Mt&:=\sup\big\{x\geq0:\Pt\big([x,\infty)\big)>0\big\}.
\end{aligned}
\ee
Note that $0\leq\mt<\Mt\leq\infty$ since we consider passage-time distributions not concentrated to a single point. Fix $\tlow$ and $\thi$ such that $\mt<\tlow<\thi<\Mt$, define the regenerative event
\be\label{eq:Ank}
A_n:=\big\{\tau_e\leq\tlow, \forall e\in\hat{E}_n\big\}\cap\big\{\tau_e\geq\thi, \forall e\in E_n\setminus\hat{E}_n\big\}.
\ee
The event $A_n$ is depicted in Figure \ref{fig:GAnk}. Trivially $P(A_n)>0$. The vertex at level $n+M$ first reached via $\gamma_n$ will be of particular interest, so we introduce the following notation.

\begin{df}
Let $\hat{v}_n$ denote the vertex at level $n$ first reached via $\gamma_{n-M}$. That is, $\hat{v}_{n+M}$ denotes the vertex at level $n+M$ first reached via $\gamma_n$.
\end{df}

\begin{figure}[htbp]
\begin{center}
\resizebox{0.9\textwidth}{!}{\input{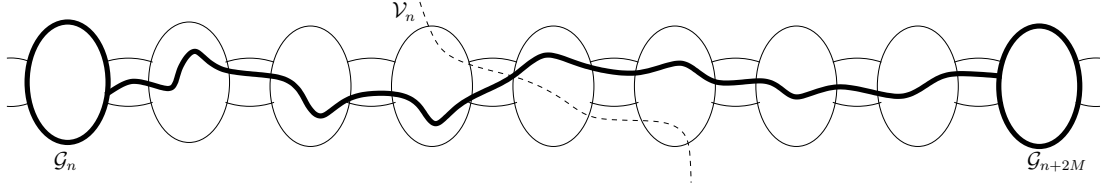}}
\end{center}
\caption{If $A_n$ occurs, the thick edges symbolising $\hat{E}_n$ are 'quick'.}
\label{fig:GAnk}
\end{figure}

We will consider random variables conditioned on the occurrence of events like $A_n$. Two random variables $X$ and $Y$ will be said to be \emph{conditionally independent} given $A$, if the random variables $X$ conditioned on $A$, and $Y$ conditioned on $A$, are independent.

\begin{lma}\label{lma:Ank}
For every $\tlow$ and $\thi$ such that $\mt<\tlow<\thi<\Mt$, there exists $M\in\N$, such that for all $u\in\bigcup_{k\leq n}\Vb_{\G_k}$ and $v\in\bigcup_{k\geq n+2M}\Vb_{\G_k}$:
\begin{enumerate}[\quad a)]
\item If $A_n$ occurs, then $T(\Gamma)>T(u,v)$ for any path $\Gamma$ between $u$ and $v$ not visiting $\hat{v}_{n+M}$, and
\be\label{eq:lma:Ank}
T(u,v)=T(u,\hat{v}_{n+M})+T\left(\hat{v}_{n+M},v\right).
\ee
\item $T(u,\hat{v}_{n+M})$ and $T\left(\hat{v}_{n+M},v\right)$ are conditionally independent given $A_n$. In addition, given $A_n$, $T(u,\hat{v}_{n+M})$ is conditionally independent of the passage time of any edge beyond level $n+2M$, and $T(\hat{v}_{n+M},v)$ is conditionally independent of the passage time of any edge before level $n$.
\end{enumerate}
\end{lma}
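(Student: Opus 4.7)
The plan is to exploit that on $A_n$ the edge set $\hat{E}_n$ forms a fast ``highway'' (each passage time $\le \tlow$) embedded in ``slow terrain'' (each passage time $\ge \thi>\tlow$). Writing $D_0$ for the graph-theoretic diameter of the finite block $\G_n$ (which is independent of $n$), I would choose $M$ large enough that $M(\thi-\tlow)>D_0\,\tlow$, so that a global deviation from $\gamma_n$ inside the band accumulates an excess cost proportional to $M$, enough to dominate the at most $2D_0\,\tlow$ of within-level navigation one may incur at levels $n$ and $n+2M$. Local detours around $\hat{v}_{n+M}$ will be handled separately by a direct edge-by-edge comparison that does not rely on $M$.

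For part (a), fix any path $\Gamma$ from $u$ to $v$ and let $x$ be its first visit to level $n$ and $y$ its last visit to level $n+2M$. Let $\Gamma^\ast$ agree with $\Gamma$ outside the band and, inside, consist of a within-level route from $x$ to the initial vertex of $\gamma_n$ in $\G_n$, followed by $\gamma_n$, followed by a within-level route from the terminal vertex of $\gamma_n$ in $\G_{n+2M}$ to $y$. Having $\Gamma^\ast$ share the outside-band trajectory of $\Gamma$ cancels the contributions from the unconstrained passage times there and reduces the analysis to a within-band estimate. When $\Gamma$ avoids $\hat{v}_{n+M}$, two regimes appear: \emph{(i)} if $\Gamma$ mostly follows $\gamma_n$ but detours locally around $\hat{v}_{n+M}$, then $\Gamma$ omits the two $\gamma_n$ edges incident to $\hat{v}_{n+M}$ and must include at least two bad between-level edges in their place, so cancelling shared edges yields $T(\Gamma)-T(\Gamma^\ast)\ge 2(\thi-\tlow)>0$; \emph{(ii)} if $\Gamma$ deviates from $\gamma_n$ globally (say, by following an alternative route across the whole band), then at least $2M$ of its between-level edges have passage time $\ge\thi$, giving $T(\Gamma)-T(\Gamma^\ast)\ge 2M(\thi-\tlow)-2D_0\,\tlow>0$ by our choice of $M$. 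Intermediate bad paths interpolate between (i) and (ii). Hence the optimiser in $T(u,v)$ passes through $\hat{v}_{n+M}$, and decomposing it at that vertex gives the identity \eqref{eq:lma:Ank}.

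For part (b) I propose the disjoint edge sets $S^-$ (edges with both endpoints at level $\le n+M$ but not in $\Eb_{\G_{n+M}}$) and $S^+$ (defined symmetrically on the right). A ``no overshoot'' claim, proved by the same comparison logic as in part (a) but restricted to the left half of the band, shows that on $A_n$ an optimal path from $u$ to $\hat{v}_{n+M}$ uses only edges in $S^-$ together with edges strictly before level $n$: an excursion into levels $>n+M$ would require a return via a between-level edge off $\gamma_n$ (passage time $\ge\thi$), while arrival at $\hat{v}_{n+M}$ through a within-level edge at $n+M$ is beaten by arrival along $\gamma_n$'s last edge. A symmetric statement holds for $T(\hat{v}_{n+M},v)$ on $S^+$. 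Since the passage times remain independent given $A_n$ and the two travel times are measurable functions of disjoint edge sets, conditional independence follows. The additional claims (conditional independence of $T(u,\hat{v}_{n+M})$ from edges beyond level $n+2M$, and of $T(\hat{v}_{n+M},v)$ from those before level $n$) are then immediate, since $S^-$ and $S^+$ lie on opposite sides of the band.

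The main obstacle is to handle regimes (i) and (ii) with a single choice of $M$. Local detours yield only a fixed penalty $\ge 2(\thi-\tlow)$ that does \emph{not} grow with $M$, so the strict inequality there must come from an edge-by-edge cancellation of the shared $\gamma_n$ portion; crude upper and lower bounds on total costs are too weak in that regime. Global deviations yield a penalty $2M(\thi-\tlow)$ that grows linearly in $M$ and can be made to exceed the fixed navigation slack $2D_0\,\tlow$. Reconciling these two disparate estimates within one value of $M$, and then porting the same dichotomy to the half-band analysis required by part (b), is where the bulk of the combinatorial work lies.
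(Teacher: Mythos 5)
Your intuition (a quick highway $\hat{E}_n$ embedded in slow terrain, with $M$ chosen so that a full off-highway crossing of the band costs at least $M(\thi-\tlow)$ more than the within-block navigation slack) matches the paper's, and your choice of $M$ is essentially the paper's $M>\tlow L/(\thi-\tlow)$. But the heart of part (a) is missing, and you say so yourself: the dichotomy between a ``local detour'' (fixed penalty $2(\thi-\tlow)$, independent of $M$) and a ``global deviation'' (penalty $2M(\thi-\tlow)-2D_0\tlow$) does not exhaust the paths avoiding $\hat{v}_{n+M}$. A path can follow $\gamma_n$ part of the way, leave it, and wander through the band using a mixture of slow edges and the quick edges of $\Eb_{\G_n}$, $\Eb_{\G_{n+2M}}$ and the far portion of $\gamma_n$, incurring neither of your two clean penalties; ``intermediate bad paths interpolate'' is not an argument, and the shared-edge cancellation against $\Gamma^\ast$ only works when edges can actually be paired off. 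Since you explicitly defer ``reconciling these two disparate estimates'' as the bulk of the work, the proposal is incomplete precisely where the lemma is nontrivial.

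The device that closes this gap in the paper is a separating vertex set rather than a path comparison: with $\beta:=\dist(\hat{v}_{n+M},\Vb_{\G_{n+2M}})$, set $\Vc_n:=\{w:\dist(w,\Vb_{\G_{n+2M}})=\beta\}$. Every path from $u$ to $v$ crosses $\Vc_n$, so it suffices to show that on $A_n$ one has $T(u,\hat{v}_{n+M})<T(u,w)$ and $T(\hat{v}_{n+M},v)<T(w,v)$ for every $w\in\Vc_n\setminus\{\hat{v}_{n+M}\}$. This is a single uniform estimate with no case analysis on how far the path strays: a path touching $\gamma_n$ is beaten by diverting along $\gamma_n$ to $\hat{v}_{n+M}$, while a path to $\Vc_n$ disjoint from $\gamma_n^-$ must use at least $|\gamma_n^-|\geq M$ slow edges, against $T(u,\hat{v}_{n+M})\leq(L+|\gamma_n^-|)\tlow$, giving a margin of at least $(\thi-\tlow)M-\tlow L>0$. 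The same set then does the work in part (b): $T(u,\hat{v}_{n+M})$ and $T(\hat{v}_{n+M},v)$ become infima over paths meeting $\Vc_n$ only at $\hat{v}_{n+M}$, hence functions of edge-disjoint parts of the graph. This is also cleaner than your level-$(n+M)$ cut, whose ``no overshoot'' claim (e.g.\ that the optimum never arrives at $\hat{v}_{n+M}$ from the right along the quick $\gamma_n$-edge on the far side) you would still have to prove. I would rework the proof around such a cut set.
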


\begin{proof}
It suffice to prove the lemma for $u\in\Vb_{\G_n}$ and $v\in\Vb_{\G_{n+2M}}$. For given $\tlow$ and $\thi$, choose an integer $M>\tlow L/(\thi-\tlow)$,
where $L$ denotes the cardinality of $\Eb_{\G_n}$. Set $\beta:=\dist(\hat{v}_{n+M},\Vb_{\G_{n+2M}})$, where $\dist(v,V)$ denotes the smallest number of edges one has to pass in order to reach a vertex of $V$ from $v$, and define (see Figure \ref{fig:GAnk})
$$
\Vc_n:=\bigg\{v\in\bigcup_{j=n}^{n+2M}\Vb_{\G_j}:\dist(v,\Vb_{\G_{n+2M}})=\beta\bigg\}.
$$
We will prove that, given $A_n$,
\be\label{eq:Vcineq}
T(u,\hat{v}_{n+M})<T(u,w)\quad\text{and}\quad T(\hat{v}_{n+M},v)<T(w,v)
\ee
for all $w\in\Vc_{n}\setminus\{\hat{v}_{n+M}\}$. This proves that $T(\Gamma)>T(u,v)$ for all paths $\Gamma$ between $u$ and $v$ that does not visit $\hat{v}_{n+M}$, since each path from $u$ to $v$ has to pass some vertex in $\Vc_n$. Thus, (\ref{eq:lma:Ank}) holds. That $T(u,\hat{v}_{n+M})$ and $T\left(\hat{v}_{n+M},v\right)$ are conditionally independent given $A_n$ is easily seen from the following observation. When $A_n$ occurs, it follows from (\ref{eq:Vcineq}) that $T(u,\hat{v}_{n+M})$ is the infimum of $T(\Gamma)$ over all paths $\Gamma$ from $u$ to $\hat{v}_{n+M}$ that intersects $\Vc_n$ only at $\hat{v}_{n+M}$, whereas $T\left(\hat{v}_{n+M},v\right)$ is the infimum of $T(\Gamma)$ over all paths $\Gamma$ from $\hat{v}_{n+M}$ to $v$ that intersects $\Vc_n$ only at $\hat{v}_{n+M}$. Hence, the infima of passage times are taken over paths in disjoint parts of the graph. The remaining statement in \emph{b)} follows similarly.

To deduce (\ref{eq:Vcineq}), condition on $A_n$. First note that, by definition of $\gamma_n$ and $\Vc_n$, $T(w^\prime,\hat{v}_{n+M})<T(w^\prime,w)$ for any vertex $w^\prime$ visited by $\gamma_n$, and $w\in\Vc_n\setminus\{\hat{v}_{n+M}\}$. Let $\gamma_n^-$ denote the part of the path $\gamma_n$ between $\Vb_{\G_n}$ and $\hat{v}_{n+M}$. Let $\Gamma$ be any path from $u$ to $\Vc_n$ disjoint from $\gamma_n^-$. Note that
$$
T(u,\hat{v}_{n+M})\,\leq\,\left(L+|\gamma_n^-|\right)\tlow\quad\text{and}\quad T(\Gamma)\,\geq\,|\gamma_n^-|\thi.
$$
(Here $\gamma_n^-$ is identified with its set of edges.) Thus, by the choice of $M$,
\bea
T(\Gamma)-T(u,\hat{v}_{n+M})\;\geq\;(\thi-\tlow)|\gamma_n^-|-\tlow L\;\geq\;(\thi-\tlow)M-\tlow L\;>\;0.
\eea
This proves that $T(u,\hat{v}_{n+M})<T(u,w)$ for all $w\in\Vc_{n}\setminus\{\hat{v}_{n+M}\}$. The proof of the remaining inequality in (\ref{eq:Vcineq}) is similar.
\end{proof}

Assume from now on that $\tlow$, $\thi$ and $M$ are chosen in accordance with Lemma \ref{lma:Ank}. Next, introduce an auxiliary random variable $\Delta$. Throughout, $\Delta$ will denote any bounded $\N$-valued random variable independent of $\{\tau_e\}_{e\in\Eb}$. Except for in Section~\ref{sec:monT}, we will assume $\Delta\equiv0$.

Let $\rho_I:=\max\{n\in\Z:\Vb_{\G_n}\cap I\neq\emptyset\}$ denote the furthest initially infected level. Define
$$
n_k:=\rho_I+\Delta+k(2M+1),\quad\text{for }k\in\Z,
$$
and note that the sequence of events $\{\Ank\}_{k\in\Z}$ is readily seen to be i.i.d. Let $\kappa=\min\{k\geq0:A_{n_k}\text{ occurs}\}$ and set $\rho_0:=n_\kappa+M$.
Define further
$$
\rho_k\;:=\;M+\min\{n_m:n_m>\rho_{k-1}\text{ and }A_{n_m}\text{ occurs}\},\quad\text{for }k\geq1.
$$
(And analogously for $k<0$.) Since $\rho_k\geq\rho_I+M$ for all $k\geq0$, Lemma \ref{lma:Ank} says that each path along which any vertex at level $\rho_k+M$ and beyond is infected has to pass the vertex $\hat{v}_{\rho_k}$.
\begin{figure}[htbp]
\begin{center}
\resizebox{0.9\textwidth}{!}{\input{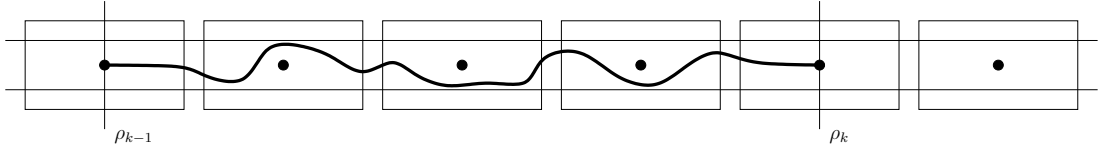}}
\end{center}
\caption{A schematic picture in which boxes indicate locations of the sequence $\{\Ank\}_{k\geq0}$, vertical lines indicate the sequence $\{\rho_k\}_{k\geq0}$, and dots indicate $\{\hat{v}_{n_k}\}_{k\geq0}$. The distance between the two vertical lines is $S_k$, and the thick curve indicates $\tSk$.}
\label{fig:regevent}
\end{figure}

\begin{df}
The vertex $\hat{v}_n$ is referred to as a \emph{regeneration point} if $n=\rho_k$ for some $k\geq0$.
\end{df}

For $k\geq1$, let $S_k$ denote the distance (measured in levels) between two regeneration points, and $\tSk$ denotes the passage time between two regeneration points. That is, 
$$
S_k:=\rho_k-\rho_{k-1},\quad\text{and}\quad\tSk:=T(\hat{v}_{\rho_{k-1}},\hat{v}_{\rho_k}).
$$
By Lemma \ref{lma:Ank} we see that $\tSk=T(I,\hat{v}_{\rho_k})-T(I,\hat{v}_{\rho_{k-1}})$, which immediately gives that
$$
\rho_n=\rho_0+\sum_{k=1}^{n}S_k,\quad\text{and}\quad T(I,\hat{v}_{\rho_n})=T(I,\hat{v}_{\rho_0})+\sum_{k=1}^{n}\tSk.
$$

\begin{lma}\label{lma:patch}
Assume that $\tlow$, $\thi$ and $M$ are chosen in accordance with Lemma \ref{lma:Ank}. Then, $\{(\tSk, S_k)\}_{k\in\Z}$ forms a sequence of i.i.d.\ $[0,\infty)\times\Z_+$-valued random variables.
\end{lma}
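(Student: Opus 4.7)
The plan is to build the i.i.d.\ structure of the pairs $(\tSk,S_k)$ in three layers. First, I would observe that the events $\{A_{n_k}\}_{k\in\Z}$ form an i.i.d.\ Bernoulli sequence. Indeed, $A_n$ is a function only of the passage times of edges in $E_n$, and the spacing $n_{k+1}-n_k=2M+1>2M$ forces the sets $\{E_{n_k}\}_{k\in\Z}$ to be pairwise disjoint; combined with the i.i.d.\ property of $\{\tau_e\}_{e\in\Eb}$, this gives the claim. Note also that the randomizer $\Delta$ is independent of the passage times, so conditioning on its value does not affect this.

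Second, I would handle the lengths $\{S_k\}$ alone. Since $\kappa_k$ is the index of the $k$-th success in the i.i.d.\ Bernoulli sequence $\{\mathbf{1}_{A_{n_k}}\}_{k\geq\kappa}$, the standard strong-Markov argument for Bernoulli trials gives that the gaps $J_k:=\kappa_k-\kappa_{k-1}$ are i.i.d.\ geometric, and hence $S_k=(2M+1)J_k$ are i.i.d.

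Third, and most substantially, I would promote this to joint i.i.d.-ness of $(\tSk, S_k)$ using the conditional independence statement of Lemma~\ref{lma:Ank}(b). Applied with $n=n_{\kappa_k}$ (so that the conditioning event $A_n$ is guaranteed by the definition of $\rho_k$), it yields that $T(\hat{v}_{\rho_{k-1}},\hat{v}_{\rho_k})$ is conditionally independent of all passage times at levels strictly greater than $\rho_k+M$, while $T(\hat{v}_{\rho_k},\hat{v}_{\rho_{k+1}})$ is conditionally independent of all passage times at levels strictly less than $\rho_k-M=n_{\kappa_k}$. Iterating this decoupling across every regeneration point, each pair $(\tSk,S_k)$ becomes a function of the passage times in the slab between $\rho_{k-1}-M$ and $\rho_k+M$, and of the pattern of failed $A_{n_j}$ inside that slab. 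These slabs are pairwise disjoint once the sequence $\{\kappa_j\}$ is fixed. Therefore, conditional on $\{\kappa_j\}$, the pairs $(\tSk,S_k)$ become independent; by translation invariance of the graph together with the i.i.d.-ness of the passage times, they are conditionally identically distributed. Averaging over $\{\kappa_j\}$, whose consecutive differences are i.i.d.\ by Step~2, preserves both the independence and the identical distribution of the pairs.

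The main obstacle is the bookkeeping in the third step: the edge sets $E_{n_{\kappa_k}}$ that encode the regeneration event also appear in the computation of $\tSk$ (and potentially $\tilde{S}_{k+1}$), so the slabs are only ``disjoint'' after one invokes Lemma~\ref{lma:Ank}(b) to excise the shared region $E_{n_{\kappa_k}}$ via conditioning. One therefore has to be careful to write the conditional distribution of a pair as depending only on the slab strictly between two regeneration points, plus the deterministic conditioning $\bigcap_k A_{n_{\kappa_k}}$; once this is in place, the translation-invariance argument finishes the proof.
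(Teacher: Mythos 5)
Your proposal is correct and follows essentially the same route as the paper: i.i.d.-ness of the well-separated events $A_{n_k}$ gives the geometric law of the $S_k$, and the conditional-independence statement of Lemma~\ref{lma:Ank}\,\emph{b)} decouples the passage-time increments across regeneration points. Your version is simply more explicit than the paper's (notably about the joint law of the pairs and the overlap of the blocks $E_{n_{\kappa_k}}$, which the paper leaves implicit).
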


\begin{proof}
It is easily seen that $\{S_k\}_{k\in\Z}$ is an i.i.d.\ sequence of geometrically distributed random variables, times a factor $2M+1$, since the events $\Ank$ are pairwise independent with equal success probabilities. For the same reason does the distribution of $\tSk$ not depend on $k$. Independence of $\tSk$ and $\tau_{S_l}$ for $k<l$ follows from Lemma~\ref{lma:Ank} part~\emph{b)}, since $\tau_{S_l}$ is independent of $\rho_k$.
\end{proof}

\begin{prop}\label{prop:regenerative}
$\{T(I,\hat{v}_n)\}_{n\geq1}$ is a regenerative sequence. Moreover, if $\tlow$, $\thi$ and $M$ are chosen in accordance with Lemma \ref{lma:Ank}, then $\{\rho_n\}_{n\geq0}$ is a sequence of regenerative levels, and
$$
T(I,v_{\rho_n+m,i})-T(I,\hat{v}_{\rho_n})\,\stackrel{d}{=}\,T(I,v_{\rho_1+m,i})-T(I,\hat{v}_{\rho_1}),\quad\text{for all }m\geq M,n\geq1,
$$
where superscript $d$ indicates that the equality holds in distribution.
\end{prop}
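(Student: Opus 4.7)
The plan is to derive both parts as fairly direct consequences of Lemma~\ref{lma:Ank} and Lemma~\ref{lma:patch}, treating the regenerative-sequence claim and the distributional equality separately.

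For the first assertion I would simply unwind definitions. By construction $\rho_k-\rho_{k-1}=S_k$, and since $\rho_{k-1}\geq\rho_I+M$ (so that every $u\in I$ lies at or before level $\rho_{k-1}-M$), Lemma~\ref{lma:Ank}(a) applied at the regeneration event $A_{\rho_{k-1}-M}$ with $v=\hat{v}_{\rho_k}$ yields
$T(I,\hat{v}_{\rho_k})-T(I,\hat{v}_{\rho_{k-1}})=T(\hat{v}_{\rho_{k-1}},\hat{v}_{\rho_k})=\tSk$.
Both $\{S_k\}_{k\geq1}$ and $\{\tSk\}_{k\geq1}$ are non-negative and i.i.d.\ by Lemma~\ref{lma:patch}, so the conditions of Definition~\ref{def:regseq} are satisfied with regenerative levels $\{\rho_n\}_{n\geq0}$.

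For the distributional equality, I would first apply Lemma~\ref{lma:Ank}(a) once more, now at $A_{\rho_n-M}$ with $v=v_{\rho_n+m,i}$, which lies at level $\geq\rho_n+M=(\rho_n-M)+2M$ because $m\geq M$, to rewrite
$T(I,v_{\rho_n+m,i})-T(I,\hat{v}_{\rho_n})=T(\hat{v}_{\rho_n},v_{\rho_n+m,i})$.
It therefore suffices to show that the right-hand side has the same distribution for every $n\geq1$. For this I would condition on $\{\rho_n=r\}$. The event decomposes as $A_{r-M}$ together with a \emph{past} event specifying which of the candidate events $A_{n_k}$ at levels $n_k<r-M$ occur, as well as the value of $\rho_0$. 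Lemma~\ref{lma:Ank}(b) ensures that conditional on $A_{r-M}$, the random variable $T(\hat{v}_r,v_{r+m,i})$ is independent of the passage times of all edges before level $r-M$; in particular, further conditioning on the past event leaves its distribution unchanged. Translation invariance of the graph together with the i.i.d.\ passage times then imply that the conditional law of $T(\hat{v}_r,v_{r+m,i})$ given $A_{r-M}$ does not depend on $r$, and averaging over the law of $\rho_n$ yields a distribution independent of $n$.

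The main subtlety I anticipate is making precise that, for the future quantity $T(\hat{v}_r,v_{r+m,i})$, conditioning on $\{\rho_n=r\}$ is equivalent to conditioning only on $A_{r-M}$. This rests on the conditional independence in Lemma~\ref{lma:Ank}(b), together with the observation that the only portion of $\{\rho_n=r\}$ that is not measurable with respect to edges before level $r-M$ is precisely $A_{r-M}$ itself. The restriction to $n\geq1$ is natural, as $\rho_0$ is defined via the first occurrence of a candidate event starting at the fixed level $\rho_I+\Delta$ and so need not share the law of the later inter-arrival increments.
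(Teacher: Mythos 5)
Your proposal is correct and follows essentially the same route as the paper: Lemma~\ref{lma:patch} for the regenerative-sequence claim, Lemma~\ref{lma:Ank}(a) to identify $T(I,v_{\rho_n+m,i})-T(I,\hat{v}_{\rho_n})$ with $T(\hat{v}_{\rho_n},v_{\rho_n+m,i})$, and the independence structure of the events $A_{n_k}$ (via Lemma~\ref{lma:Ank}(b) and translation invariance) to see that the latter's distribution does not depend on $n$. You simply spell out the conditioning on $\{\rho_n=r\}$ that the paper leaves implicit in the phrase ``by the i.i.d.\ nature of the events $A_{n_k}$''.
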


\begin{proof}
That $\{T(I,\hat{v}_n)\}_{n\geq1}$ is a regenerative sequence with regenerative levels $\{\rho_n\}_{n\geq0}$ follows from Lemma \ref{lma:patch}. By Lemma \ref{lma:Ank}, $T(I,v_{\rho_n+m,i})-T(I,\hat{v}_{\rho_n})$ equals $T(\hat{v}_{\rho_n},v_{\rho_n+m,i})$ for $m\geq M$, whose distribution is independent of $n$, by the i.i.d.\ nature of the events $\Ank$.
\end{proof}

Let $\mut:=\E[\tSk]$ and $\muS:=\E[S_k]$ denote the expected passage time and distance between two regeneration points, respectively, and define
\be\label{def:timeconstant}
\mu:=\frac{\mut}{\muS},\quad\text{and}\quad\sigma^2:=\frac{\Var(\tSk-\mu S_k)}{\muS}.
\ee
The constants $\mu$ and $\sigma^2$ are those featured in the introduction. In order to state clear moment conditions, we need to know how moments of $\tau_e$ relate to moments of $S_k$ and $\tSk$. The next result shows that $\E[\tau_e^\alpha]<\infty$, for $\alpha=1,2$ respectively, is sufficient for $0<\mu<\infty$ and $0<\sigma^2<\infty$.

\begin{prop}\label{prop:pmoments}
Assume that $\Pt$ is not concentrated to a single point. Then,
\begin{enumerate}[\quad a)]
\item there exists $\alpha>0$ such that $\E\left[e^{\alpha S_k}\right]<\infty$.
\end{enumerate}
Assume further that there are $p\geq1$ (edge) disjoint paths from $\hat{v}_0$ to $\hat{v}_1$, and let $Y_p$ denote the minimum of $p$ independent random variables distributed according to $\Pt$. Then,
\begin{enumerate}[\quad a)]
\setcounter{enumi}{1}
\item if $\E[Y_p^{\alpha}]<\infty$, for some $\alpha>0$, we have $\E\big[\tSk^\alpha\big]<\infty$.
\end{enumerate}
\end{prop}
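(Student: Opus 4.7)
Part (a) is essentially immediate from Lemma~\ref{lma:patch}: the $S_k$ equal $(2M+1)$ times a geometric random variable with success parameter $P(A_0) \in (0,1)$, and geometric distributions have exponential moments up to a positive threshold, giving $\E[e^{\alpha S_k}] < \infty$ for all sufficiently small $\alpha > 0$.

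For part (b) I would proceed in three steps. First, by periodicity of $\G$ the assumption yields $p$ edge-disjoint paths $\Gamma_1, \ldots, \Gamma_p$ of length at most some graph-dependent constant $L$ between every pair $(\hat{v}_n, \hat{v}_{n+1})$. Bounding $T(\hat{v}_n, \hat{v}_{n+1}) \leq L \cdot \min_i \max_{e \in \Gamma_i} \tau_e$, and using edge-disjointness to obtain $p$ independent maxes of at most $L$ i.i.d.\ passage times, the elementary inequality $1 - F(t)^n \leq n(1-F(t))$ yields
\[
P\!\left(\min_i \max_{e \in \Gamma_i} \tau_e > t\right) \leq L^p \, P(Y_p > t),
\]
and hence $\E[T(\hat{v}_n, \hat{v}_{n+1})^\alpha] \leq L^{\alpha+p} \E[Y_p^\alpha] < \infty$.

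Second, subadditivity gives $\tSk \leq \sum_{n=\rho_{k-1}}^{\rho_k - 1} T(\hat{v}_n, \hat{v}_{n+1})$, a sum of $S_k$ terms each of finite $\alpha$-moment. Conditional on $\{S_k = s\}$ the joint law of $\{\tau_e\}$ is reweighted by the $A_{n_j}$-events that determine $S_k$, but since these events are defined on disjoint edge sets and each has probability at least $\min(P(A_0), 1-P(A_0)) > 0$, the crude bound $\E[X \mid B] \leq \E[X]/P(B)$ shows that each summand still has $\alpha$-moment bounded by an absolute constant $C$ under this conditioning.

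Third, applying Minkowski's inequality (for $\alpha \geq 1$) or the subadditivity $(a+b)^\alpha \leq a^\alpha + b^\alpha$ (for $0 < \alpha < 1$) to the conditional sum yields $\E[\tSk^\alpha \mid S_k = s] \leq C' s^{\max(\alpha,1)}$, and taking expectations combined with (a) gives
\[
\E[\tSk^\alpha] \leq C' \, \E[S_k^{\max(\alpha,1)}] < \infty.
\]
The most delicate point is the uniform moment control in the second step: it hinges on the block-wise independence of the regeneration events, which is exactly what the construction in Section~\ref{sec:patch} affords.
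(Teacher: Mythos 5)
Your part (a) coincides with the paper's proof (direct computation of the exponential moment of a geometric variable), but your part (b) takes a genuinely different route at the key step, namely how to control the bias introduced by conditioning on the regeneration structure. The paper sidesteps conditioning altogether: it resamples every passage time that is not already $\geq\thi$ by an independent copy of $\tau_e$ conditioned on $\{\tau_e\geq\thi\}$, producing an i.i.d.\ field $\{\sigma_e\}$ with $\tau_e\leq\sigma_e$ and $P(\sigma_e>t)\leq C_1P(\tau_e>t)$ that is \emph{independent} of the indicators determining $\{A_{n_k}\}$ and hence of $\{\rho_j\}$; the per-level moments of the dominating passage times $T'(\hat v_{j-1},\hat v_j)$ are then unaffected by conditioning on $\{S_k=(2M+1)n\}$, and the sum is closed with the same elementary inequality $(\sum a_j)^\alpha\leq n^\alpha\sum a_j^\alpha$ you use in your third step. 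You instead keep the true passage times and argue that the conditional $\alpha$-moment of each summand is uniformly bounded because the conditioning event factorises over edge-disjoint blocks, only boundedly many of which meet the fixed disjoint paths bounding a given summand. This works, but it is the one place where you must be careful: the crude bound $\E[X\mid B]\leq\E[X]/P(B)$ applied literally to $B=\{S_k=s\}$ fails, since $P(S_k=s)$ decays geometrically in $s$ and the resulting factor $e^{cS_k}$ sits exactly at the critical threshold of the exponential moment from part (a); what saves you is precisely the block independence plus the locality of the bound $T(\hat v_n,\hat v_{n+1})\leq L\min_i\max_{e\in\Gamma_i}\tau_e$ (the global infimum $T(\hat v_n,\hat v_{n+1})$ itself is not measurable with respect to a bounded edge set), so that only a bounded number of factors, each of probability at least $\min(p_A,1-p_A)$, enter the denominator. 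Your version also requires conditioning on $\rho_{k-1}$ as well as $S_k$ so that the summands sit at deterministic levels. The paper's domination trick buys a cleaner argument that is reused later (in Lemma~\ref{lma:approx} and in the uniform-integrability steps of Section~\ref{sec:monT}); your Minkowski step gives the marginally sharper exponent $\E[S_k^{\max(\alpha,1)}]$ in place of $\E[S_k^{\alpha+1}]$, which is immaterial here since $S_k$ has exponential moments.
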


\begin{proof}
\emph{a)} Recall that $S_k/(2M+1)$ is geometrically distributed with parameter $p_A=P(A_n)$. So
\bea
\E\big[e^{\alpha S_k}\big]\;=\;\sum_{n=1}^{\infty}e^{\alpha(2M+1)n}(1-p_A)^{n-1}p_A\; =\;e^{\alpha(2M+1)}p_A\sum_{n=1}^{\infty}\left(e^{\alpha(2M+1)}(1-p_A)\right)^{n-1},
\eea
which is finite given that $e^{\alpha(2M+1)}(1-p_A)<1$.

\emph{b)}
Let $\eta=\{\eta_e\}_{e\in\Eb}$ denote the family of indicators where $\eta_e$ takes on the values $-1,0,1$, depending on whether $\{\tau_e\leq\tlow\}$, $\{\tau_e\in(\tlow,\thi)\}$ or $\{\tau_e\geq\thi\}$. Independently of $\{\tau_e\}_{e\in\Eb}$, let $\{\tilde{\tau}_e\}_{e\in\Eb}$ be an i.i.d.\ collection of random variables distributed as $P(\tilde{\tau}_e\in\cdot)=P(\tau_e\in\cdot|\tau_e\ge\thi)$, and define $\{\sigma_e\}_{e\in\Eb}$ as
\bea
\sigma_e:=\left\{
\begin{aligned}
 &\tau_e &\text{if }\eta_e=1,\\
&\tilde{\tau}_e &\text{otherwise}.
\end{aligned}
\right.
\eea
Note that $\{\sigma_e\}_{e\in\Eb}$ is an i.i.d.\ family independent of $\eta$, but that $\eta$ determines $\{\Ank\}_{k\in\Z}$, and hence $\{\rho_j\}_{j\geq0}$. In particular, $\{\sigma_e\}_{e\in\Eb}$ and $\{\rho_j\}_{j\geq0}$ are independent. Let $T'(u,v)$ denote the passage time between $u$ and $v$ in $\Vb$ with respect to $\{\sigma_e\}_{e\in\Eb}$. By construction $\tau_e\leq\sigma_e$ for every $e\in\Eb$, so $T(u,v)\leq T'(u,v)$.

Note that $P(\sigma_e>t)\le C_1P(\tau_e>t)$ for some finite constant $C_1$. We will next prove that for every $j\in\Z$ and $\alpha>0$, and for some $C_2<\infty$,
\be\label{eq:EYbound}
\E\big[T'(\hat{v}_{j-1},\hat{v}_j)^\alpha\big]\;\leq\; C_2\E[Y_p^\alpha].
\ee
Let $\Gamma_j^{(1)},\ldots,\Gamma_j^{(p)}$ denote the $p$ disjoint paths from $\hat{v}_{j-1}$ to $\hat{v}_j$. Let $\lambda$ denote the length of the longest of these paths. Then (\ref{eq:EYbound}) follows immediately from
\bea
\begin{split}
P\big(T'(\hat{v}_{j-1},\hat{v}_j)^\alpha>t\big)\;&\leq\,\prod_{i=1,\ldots,p}P\left(T'\big(\Gamma_j^{(i)}\big)>t^{1/\alpha}\right)\;\leq\,\prod_{i=1,\ldots,p}\lambda\,P\big(\sigma_e>t^{1/\alpha}/\lambda\big)\\
&\leq\; (\lambda C_1)^pP\big(\tau_e>t^{1/\alpha}/\lambda\big)^p\;=\;(\lambda C_1)^pP\big(Y_p^\alpha>t/\lambda^\alpha\big),
\end{split}
\eea
where the second inequality follows since $T'\big(\Gamma_j^{(i)}\big)\geq s$ implies that some edge in $\Gamma_j^{(i)}$ has $\sigma_e>s/\lambda$.

Set $\Lambda_n:=\{S_k=(2M+1)n\}$. By~\eqref{eq:EYbound}, subadditivity and above domination,
we deduce that
\be\label{eq:EtSkbound}
\begin{split}
\E\big[\tSk^{\alpha}\big]\;&\leq\;\sum_{n=1}^\infty\E\bigg[\bigg.\bigg(\sum_{j=\rho_{k-1}+1}^{\rho_k}T'(\hat{v}_{j-1},\hat{v}_j)\bigg)^\alpha\bigg|\Lambda_n\bigg]P(\Lambda_n)\\
&\leq\;\sum_{n=1}^{\infty}n^\alpha\sum_{j=1}^{n}\E\left[\left.T'(\hat{v}_{j-1},\hat{v}_j)^\alpha\right|\Lambda_n\right]P(\Lambda_n)\\
&\leq\; C_2\sum_{n=1}^{\infty}n^{\alpha+1}\E\left[Y_p^\alpha\right]P(\Lambda_n)\;\leq\; C_2\E\left[Y_p^\alpha\right]\E\left[S_k^{\alpha+1}\right],
\end{split}
\ee
where the second inequality follows since for any non-negative numbers $a_j$ we have
\be\label{ieq:sum}
\bigg(\sum_{j=1}^{n}a_j\bigg)^\alpha\leq\;(n\max_ja_j)^\alpha\;\leq\; n^\alpha\sum_{j=1}^{n}a_{j}^{\alpha}.
\ee
Thus, $\E\big[\tSk^\alpha\big]<\infty$ from part \emph{a)}.
\end{proof}

\section{Asymptotics for first-passage percolation}\label{sec:results}

The regenerative behaviour, and in particular that $\{\tSk\}_{k\geq1}$ and $\{S_k\}_{k\geq1}$ form i.i.d.\ sequences, will prove helpful when deriving asymptotics for $\{T(I,\hat{v}_n)\}_{n\geq1}$. The approach will include stopping the sequence $\{T(I,\hat{v}_{\rho_k})\}_{k\geq0}$ in a suitable way. Such object has been thoroughly studied (see e.g.\ \cite{gut09}), and we will occasionally appeal to known results in order to keep the presentation concise. However, it is often both easy and instructive to derive our results directly from the regenerative behaviour.

Assume throughout that $\tlow$, $\thi$ and $M$ are chosen in accordance with Lemma \ref{lma:Ank}, and that $\Delta\equiv0$. We will without further comment use the fact that if $X_n\raw X$ and $\eta_n\raw\infty$ almost surely as $n\raw\infty$, then $X_{\eta_n}\raw X$ almost surely as $n\raw\infty$. We also remind the reader that for any i.i.d.\ sequence $\{X_n\}_{n\geq 1}$, a simple application of the Borel-Cantelli lemmas shows that
\be\label{eq:asconvergence}
\lim_{n\raw\infty}\frac{X_n^\alpha}{n}=0,\text{ almost surely}\quad\Leftrightarrow\quad\E[|X_1|^\alpha]<\infty.
\ee
In order to approximate $T(I,\hat{v}_n)$, we will stop the regenerating sequence when $\Ank$ occurs for the least $k$ such that $n_k\geq n$. In terms of the sequence of regenerative levels, we define
$$
\nu(n):=\min\{m\geq0:\rho_m\geq n+M\}.
$$

\begin{lma}\label{lma:stopptid}
$\{\nu(n)\}_{n\geq0}$ is a non-decreasing sequence such that almost surely
$$
\lim_{n\raw\infty}\frac{n}{\nu(n)}=\muS\quad\text{and}\quad\lim_{n\raw\infty}\frac{\rho_{\nu(n)}}{n}=1.
$$
\end{lma}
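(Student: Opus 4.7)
The plan is to treat this as a standard renewal-theoretic fact, using Lemma~\ref{lma:patch} to supply the i.i.d.\ inter-arrival structure, and Proposition~\ref{prop:pmoments}~a) to guarantee that $\mu_S=\E[S_k]\in(0,\infty)$ (the lower bound is trivial since $S_k\geq 2M+1$, and the exponential moment gives the upper bound). Once these hypotheses are in place, everything is a routine inversion of the strong law of large numbers.

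First I would verify the easy structural properties. Monotonicity of $\nu(n)$ is immediate from the definition: if $n\leq n'$ then $\{m\geq0:\rho_m\geq n'+M\}\subset\{m\geq0:\rho_m\geq n+M\}$, so taking minima reverses the inclusion. Next, I would check that $\nu(n)\raw\infty$ almost surely, which is needed in order to be allowed to substitute $\nu(n)$ into any limit along $m$. Because $\{S_k\}_{k\geq 1}$ is i.i.d.\ with $\E[S_k]>0$, the strong law gives $\rho_m\raw\infty$ almost surely; hence for each fixed $m$ we have $\rho_m<n+M$ for all $n$ large enough, forcing $\nu(n)>m$ eventually.

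Next I would prove the first limit via the classical sandwich. By definition of $\nu(n)$,
\be\label{eq:sandwich}
\rho_{\nu(n)-1}\;<\;n+M\;\leq\;\rho_{\nu(n)}.
\ee
Dividing by $\nu(n)$ and using that $\rho_m/m\to\mu_S$ almost surely, together with $\nu(n)\raw\infty$, the right-hand side converges to $\mu_S$. For the left-hand side I would write
\bea
\frac{\rho_{\nu(n)-1}}{\nu(n)}\;=\;\frac{\rho_{\nu(n)-1}}{\nu(n)-1}\cdot\frac{\nu(n)-1}{\nu(n)},
\eea
where the first factor again tends to $\mu_S$ and the second to $1$. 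Squeezing gives $(n+M)/\nu(n)\raw\mu_S$ almost surely, from which the first limit $n/\nu(n)\raw\mu_S$ follows.

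Finally, the second limit is a direct consequence of the first: writing
\bea
\frac{\rho_{\nu(n)}}{n}\;=\;\frac{\rho_{\nu(n)}}{\nu(n)}\cdot\frac{\nu(n)}{n},
\eea
the right-hand side converges almost surely to $\mu_S\cdot\mu_S^{-1}=1$. There is no genuine obstacle here; the only point that requires a moment of care is ensuring $\mu_S\in(0,\infty)$ before invoking the SLLN, which is exactly what Proposition~\ref{prop:pmoments}~a) (together with the trivial lower bound $S_k\geq 2M+1$) provides.
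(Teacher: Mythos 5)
Your proposal is correct and follows essentially the same route as the paper: the sandwich $\rho_{\nu(n)-1}<n+M\leq\rho_{\nu(n)}$ combined with the strong law for $\{S_k\}$ and the substitution $\rho_{\nu(n)}/n=(\rho_{\nu(n)}/\nu(n))(\nu(n)/n)$. The only cosmetic difference is in the lower bound of the sandwich, where the paper writes $\rho_{\nu(n)-1}/\nu(n)=\rho_{\nu(n)}/\nu(n)-S_{\nu(n)}/\nu(n)$ and kills the last term via the Borel--Cantelli fact \eqref{eq:asconvergence}, whereas you rescale by $\nu(n)-1$; both work.
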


\begin{proof}
It is clear that $\nu(n)\uparrow\infty$ as $n\raw\infty$, almost surely. Lemma \ref{lma:patch} and Proposition \ref{prop:pmoments} assure that $\{S_k\}_{k\geq1}$ forms an i.i.d.\ sequence with finite mean. Since $\rho_{\nu(n)-1}<n+M\leq\rho_{\nu(n)}$, we have
$$
\frac{\rho_{\nu(n)}}{\nu(n)}-\frac{S_{\nu(n)}}{\nu(n)}\;<\;\frac{n+M}{\nu(n)}\;\leq\;\frac{\rho_{\nu(n)}}{\nu(n)}.
$$
This, together with the classical Law of Large Numbers and (\ref{eq:asconvergence}) proves the first statement. Since
$$
\frac{\rho_{\nu(n)}}{n}\,=\,\frac{\rho_{\nu(n)}}{\nu(n)}\frac{\nu(n)}{n},
$$
the second statement follows from the Law of Large Numbers and the first .
\end{proof}

To prove Theorem \ref{int:LLN}, \ref{int:CLT} and \ref{int:LIL} it suffices to consider the sequence $\{T(I,\hat{v}_n)\}_{n\geq1}$, since $T(I,v_{n,i})$ and $T(I,\hat{v}_n)$ differ by at most a finite sum of random variables. In fact, it will be sufficient to study $\{\Trnn\}_{n\geq1}$, and apply the following lemma to finish. Recall that $Y_p$ denotes the minimum of $p$ independent random variables distributed according to $\Pt$.

\begin{lma}\label{lma:approx}
Assume that there are $p\geq1$ (edge) disjoint paths from $\hat{v}_0$ to $\hat{v}_1$.
For every $\alpha>0$,
\begin{enumerate}[\quad a)]
\item $\displaystyle{\lim_{n\raw\infty}\frac{|\rho_{\nu(n)}-n|^\alpha}{n}=0}$, almost surely.
\item if $\E[Y_p^{\alpha}]<\infty$, then $\displaystyle{\lim_{n\raw\infty}\frac{|T(I,\hat{v}_n)-\Trnn|^\alpha}{n}=0}$, almost surely.
\end{enumerate}
\end{lma}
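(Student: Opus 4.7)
For \emph{a)}, the defining inequality $\rho_{\nu(n)-1}<n+M\leq\rho_{\nu(n)}$ yields the deterministic bound $0\leq\rho_{\nu(n)}-n\leq S_{\nu(n)}+M$. Proposition~\ref{prop:pmoments}~\emph{a)} gives $\E[S_1^\beta]<\infty$ for every $\beta>0$, so (\ref{eq:asconvergence}) applied to the i.i.d.\ sequence $\{S_k\}$ shows $S_k^\alpha/k\to0$ a.s. Composing with $\nu(n)\to\infty$ a.s.\ and the limit $\nu(n)/n\to1/\muS$ from Lemma~\ref{lma:stopptid}, we obtain $S_{\nu(n)}^\alpha/n\to0$ a.s., and the elementary inequality $(a+b)^\alpha\leq c_\alpha(a^\alpha+b^\alpha)$ closes the argument.

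For \emph{b)}, subadditivity gives $|T(I,\hat{v}_n)-\Trnn|\leq T(\hat{v}_n,\hat{v}_{\rho_{\nu(n)}})$, and the coupling $\sigma_e\geq\tau_e$ of Proposition~\ref{prop:pmoments}~\emph{b)} together with subadditivity level by level majorises this by $\sum_{j=n+1}^{\rho_{\nu(n)}}Z_j$, where $Z_j:=T'(\hat{v}_{j-1},\hat{v}_j)$. Since $\{\sigma_e\}$ is i.i.d.\ and independent of $\eta$ (hence of $\{\rho_k\}$), the $Z_j$ form an i.i.d.\ sequence, and the estimate (\ref{eq:EYbound}) yields $\E[Z_1^\alpha]\leq C_2\E[Y_p^\alpha]<\infty$.

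The main idea is to recognise the regenerative block
$$
V_k\;:=\;\sum_{j=\rho_{k-1}+1}^{\rho_k}Z_j.
$$
The sequence $\{V_k\}$ is i.i.d.\ because the block lengths $S_k$ are i.i.d.\ (Lemma~\ref{lma:patch}), the $\sigma_e$'s are i.i.d.\ and independent of $\{\rho_k\}$, and successive blocks involve disjoint edge sets. Conditioning on $S_k$ and applying Jensen's inequality bounds $\E[V_k^\alpha]\leq\E[S_k^\alpha]\E[Z_1^\alpha]$ (with a parallel estimate for $\alpha<1$), which is finite by Proposition~\ref{prop:pmoments}~\emph{a)}. Hence (\ref{eq:asconvergence}) yields $V_{\nu(n)}^\alpha/n\to0$ a.s. Since $\rho_{\nu(n)-1}-n\leq M-1$ whenever $\rho_{\nu(n)-1}>n$, splitting on whether $n\geq\rho_{\nu(n)-1}$ or not gives in both cases
$$
\sum_{j=n+1}^{\rho_{\nu(n)}}Z_j\;\leq\;V_{\nu(n)}+\sum_{j=1}^{M-1}Z_{n+j}.
$$
Raising to the $\alpha$-th power, using $(a+b)^\alpha\leq c_\alpha(a^\alpha+b^\alpha)$, and dividing by $n$, both resulting terms tend to $0$ a.s.: the first by the regenerative argument above, and the second because $M-1$ is fixed and $Z_{n+j}^\alpha/n\to0$ a.s.\ for each individual $j\geq1$ by (\ref{eq:asconvergence}).

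The hardest point is not letting the bound leak: the naive estimate $\sum_{j=n+1}^{\rho_{\nu(n)}}Z_j\leq(\rho_{\nu(n)}-n)\max_{j}Z_j$ forces one to control $(S_{\nu(n)}+M)^\alpha\cdot\max_{j\leq 2n}Z_j^\alpha/n$, where the rate of decay of $\max_{j\leq 2n}Z_j^\alpha/n$ is too weak to absorb the (subpolynomial but unbounded) factor $(S_{\nu(n)}+M)^\alpha$. Folding the range $(n,\rho_{\nu(n)}]$ into at most one full regeneration block $V_{\nu(n)}$ plus a deterministically bounded tail is what restores the correct scaling.
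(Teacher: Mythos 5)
Your proof is correct and follows essentially the same route as the paper: part \emph{a)} via the bound $\rho_{\nu(n)}-n\leq S_{\nu(n)}+M$ together with \eqref{eq:asconvergence} and Proposition \ref{prop:pmoments}\emph{a)}, and part \emph{b)} via subadditivity, domination by the i.i.d.\ weights $\{\sigma_e\}$, and absorbing the range $(n,\rho_{\nu(n)}]$ into one regeneration block with finite $\alpha$-th moment. The only (cosmetic) difference is that the paper extends the block backwards to $\sum_{j=\rho_{\nu(n)-1}-M+1}^{\rho_{\nu(n)}}T(\hat{v}_{j-1},\hat{v}_j)$ in a single step, whereas you split off the at most $M-1$ leftover terms separately; your explicit composition of $V_k^\alpha/k\to0$ with the random index $\nu(n)$ is, if anything, slightly cleaner than the paper's appeal to a distributional identity.
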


\begin{proof}
Since $\rho_{\nu(n)}-n\leq S_{\nu(n)}+M\stackrel{d}{=}S_k+M$, then \emph{a)} follows from (\ref{eq:asconvergence}) and part \emph{a)} of Proposition \ref{prop:pmoments}. By subadditivity
$$
|\Trnn-T(I,\hat{v}_n)|\;\leq\sum_{j=n+1}^{\rnn}T(\hat{v}_{j-1},\hat{v}_j)\;\leq\sum_{j=\rho_{\nu(n)-1}-M+1}^{\rho_{\nu(n)}}T(\hat{v}_{j-1},\hat{v}_j),
$$
which in the proof of Proposition \ref{prop:pmoments} was seen to have finite moment of the same order as $Y_p$. Thus, also \emph{b)} follows from (\ref{eq:asconvergence}).
\end{proof}

\subsection{Proof of point-wise limit theorems}

\begin{proof}[{\bf Proof of Theorem \ref{int:LLN}}]
\emph{Almost sure convergence}.
Lemma \ref{lma:patch} and Proposition~\ref{prop:pmoments} give that $\{\tSk\}_{k\geq1}$ is an i.i.d.\ sequence with finite mean. Thus, as $n\raw\infty$,
$$
\frac{\Trnn}{n}\;=\;\frac{T(I,\hat{v}_{\rho_0})+\sum_{k=1}^{\nu(n)}\tSk}{\nu(n)}\frac{\nu(n)}{n}\;\raw\;\frac{\mut}{\muS},\quad\text{almost surely},
$$
by the classical Law of Large Numbers and Lemma \ref{lma:stopptid}. Thus by Lemma~\ref{lma:approx}, as $n\raw\infty$,
$$
\frac{T(I,\hat{v}_n)}{n}\;=\;\frac{\Trnn}{n}+\frac{T(I,\hat{v}_n)-\Trnn}{n}\;\raw\;\frac{\mut}{\muS},\quad\text{almost surely}.
$$

\emph{Uniform integrability}.
Assume that $\E[\tau_e^r]<\infty$. Since the distributions of $T(I,\hat v_{\rho_0})$ and $\big(T(I,v_{n,i})-\Trnn\big)^r$ are independent of $n$ and have finite mean, it suffices to show that $\big\{\big(\frac{1}{n}T(\hat v_{\rho_0},\hat v_{\rho_{\nu(n)}})\big)^r\big\}_{n\geq1}$ is uniformly integrable. However, by subadditivity
$$
\frac{1}{n}T(\hat v_{\rho_0},\hat v_{\rho_{\nu(n)}})\;\leq\;\frac{1}{n}T(\hat v_{\rho_0},\hat v_{\rho_n})\;\leq\;\frac{1}{n}\sum_{j=1}^nT(\hat v_{\rho_{j-1}},\hat v_{\rho_j}),
$$
so uniform integrability follows from uniform integrability for i.i.d.\ sequences.

\emph{$L^r$-convergence}.
Immediate from the almost sure convergence and uniform integrability.
\end{proof}

Theorem \ref{int:CLT} will be deduced from the following result sometimes referred to as \emph{Anscombe's theorem}. For a proof, we refer the reader to e.g.\ \citet[Theorem 1.3.1]{gut09}.

\begin{lma}[Anscombe's theorem]
Let $\{\xi_k\}_{k\geq1}$ be an i.i.d.\ sequence with mean zero and variance $\sigma_\xi^2$. Assume further that $\eta(n)/n\stackrel{p}{\raw}\theta$ in probability as $n\raw\infty$. Then, as $n\raw\infty$,
$$
\frac{1}{\sigma_\xi\sqrt{\theta n}}\sum_{k=1}^{\eta(n)}\xi_k\stackrel{d}{\raw}\chi,\quad\text{in distribution},
$$
where $\chi$ has a standard normal distribution.
\end{lma}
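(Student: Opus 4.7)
The plan is to reduce to the ordinary CLT applied at the deterministic index $m_n:=\lfloor\theta n\rfloor$, and then show that replacing $m_n$ by the random index $\eta(n)$ produces only a negligible error at scale $\sqrt{n}$. Writing $S_m:=\sum_{k=1}^m\xi_k$, I would decompose
$$
\frac{1}{\sigma_\xi\sqrt{\theta n}}\sum_{k=1}^{\eta(n)}\xi_k \;=\; \frac{S_{m_n}}{\sigma_\xi\sqrt{m_n}}\cdot\sqrt{\frac{m_n}{\theta n}} \;+\; \frac{S_{\eta(n)}-S_{m_n}}{\sigma_\xi\sqrt{\theta n}}.
$$
The first summand converges in distribution to $\chi$ by the classical CLT together with Slutsky's theorem, since $m_n/(\theta n)\raw 1$. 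It therefore suffices to show that the second summand converges to $0$ in probability.

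For the residual, fix $\varepsilon>0$ and $\delta>0$. The hypothesis $\eta(n)/n\stackrel{p}{\raw}\theta$ gives $P(|\eta(n)-m_n|>\delta m_n)\raw 0$, so I may restrict attention to the event $\{|\eta(n)-m_n|\leq\delta m_n\}$. On this event, $|S_{\eta(n)}-S_{m_n}|$ is dominated by the maximum of $|S_m-S_{m_n}|$ taken over $m$ with $|m-m_n|\leq\delta m_n$. I would then apply Kolmogorov's maximal inequality separately to the forward increments $(S_{m_n+j}-S_{m_n})_{j\geq 1}$ and to the backward increments $(S_{m_n}-S_{m_n-j})_{j\geq 1}$, both of which are partial sums of i.i.d.\ mean-zero random variables with variance $\sigma_\xi^2$. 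This gives the bound
$$
P\!\left(\max_{|m-m_n|\leq\delta m_n}|S_m-S_{m_n}|>\varepsilon\sigma_\xi\sqrt{\theta n}\right) \;\leq\; \frac{2\lfloor\delta m_n\rfloor\sigma_\xi^2}{\varepsilon^2\sigma_\xi^2\theta n} \;\leq\; \frac{2\delta}{\varepsilon^2}+o(1).
$$
Combining this with the probability of the bad event for $\eta(n)$ and letting $\delta\downarrow 0$ shows that the residual vanishes in probability, which completes the reduction.

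The substantive step is what is sometimes called Anscombe's uniform continuity condition, namely the control of fluctuations of the partial sum process over a window of length $\delta m_n$ centred at $m_n$. For i.i.d.\ sequences with finite variance this is supplied directly by Kolmogorov's maximal inequality, so no real obstacle arises; the remainder is routine bookkeeping with Slutsky's theorem. (In the application to Theorem~\ref{int:CLT}, $\xi_k$ will be the centred i.i.d.\ increments $\tSk-\mu S_k$ and $\eta(n)$ will be the stopping index $\nu(n)$ from Lemma~\ref{lma:stopptid}, for which $\nu(n)/n\raw 1/\muS$ almost surely, hence in probability.)
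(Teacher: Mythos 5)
Your argument is correct and is precisely the standard proof of Anscombe's theorem (reduction to the CLT at the deterministic index $\lfloor\theta n\rfloor$, with the random-index error controlled via Kolmogorov's maximal inequality, i.e.\ the Anscombe condition); the paper itself does not prove the lemma but refers to \citet[Theorem 1.3.1]{gut09}, where essentially this argument is given. The only point worth making explicit is that the maximal-inequality bound is over a deterministic window and therefore requires no independence between $\eta(n)$ and the $\xi_k$, which is exactly what makes the reduction work; your write-up already uses this correctly.
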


\begin{proof}[{\bf Proof of Theorem \ref{int:CLT}}]
It follows from Lemma \ref{lma:patch} and Proposition \ref{prop:pmoments} that $\{\tSk-\mu S_k\}_{k\geq1}$ is an i.i.d.\ sequence with zero mean and finite variance. Anscombe's theorem, together with Lemma~\ref{lma:stopptid}, gives convergence in distribution of the former term in the right-hand side of
$$
\frac{T(I,\hat{v}_n)-\mu n}{\sigma\sqrt{n}}\;=\;\frac{\Trnn-\mu\rnn}{\sigma\sqrt{n}}+\frac{T(I,\hat{v}_n)-\Trnn-\mu(n-\rnn)}{\sigma\sqrt{n}},
$$
to a standard normal distribution, as $n\raw\infty$. The latter vanishes according to Lemma~\ref{lma:approx}.
\end{proof}


\begin{proof}[{\bf Proof of Theorem \ref{int:LIL}}]
Recall that $\tSk-\mu S_k$ are i.i.d.\ for $k\geq1$, with zero mean and finite variance, due to Lemma \ref{lma:patch} and Proposition \ref{prop:pmoments}. Trivially
\be\label{eq:LILsplit}
\begin{split}
\frac{T(I,\hat{v}_n)-\mu n}{\sigma\sqrt{2n\log\log n}}\;&=\;\frac{\Trnn-\mu\rnn}{\sigma\sqrt{\muS2\nu(n)\log\log\nu(n)}}\sqrt{\muS\frac{\nu(n)}{n}}\sqrt{\frac{\log\log\nu(n)}{\log\log n}}\\
&\quad\;+\;\frac{T(I,\hat{v}_n)-\Trnn-\mu(n-\rnn)}{\sigma\sqrt{2n\log\log n}}.
\end{split}
\ee
Since $\nu(n)$ is non-decreasing, and for each $m\in\Z_+$, there is an $n\in\Z_+$ such that $\nu(n)=m$, it follows from (an extended version of) the Law of the Iterated Logarithm for i.i.d.\ sequences that
$$
\mathcal{L}\left(\left\{\frac{\Trnn-\mu\rnn}{\sigma\sqrt{\muS2\nu(n)\log\log\nu(n)}}\right\}_{n:\nu(n)\geq3}\right)\;=\;[-1,1],\quad\text{almost surely},
$$
and, in particular, that almost surely
\bea
\limsup_{n\raw\infty}\frac{\Trnn-\mu\rnn}{\sigma\sqrt{\muS2\nu\log\log\nu}}=1,\quad\text{and}\quad\liminf_{n\raw\infty}\frac{\Trnn-\mu\rnn}{\sigma\sqrt{\muS2\nu\log\log\nu}}=-1.
\eea
Lemma \ref{lma:stopptid} gives that $\muS\nu(n)/n\raw1$, almost surely, as $n\raw\infty$, and we further conclude that
\bea
\lim_{n\raw\infty}\frac{\log\log\nu(n)}{\log\log n}=\lim_{n\raw\infty}\frac{\log\big(\log n+\log\frac{\nu(n)}{n}\big)}{\log\log n}=1,\quad\text{almost surely}.
\eea
An application of Lemma \ref{lma:approx} now completes the proof.
\end{proof}

\subsection{Functional Donsker theorem}

Let $D=D[0,\infty)$ denote the set of right-continuous functions with left-hand limits on $[0,\infty)$, and let $\mathcal{D}$ denote the $\sigma$-algebra generated by the open sets in $D$ with Skorohod's $J_1$-topology. A sequence $\{P_n\}_{n\geq1}$ of probability measures on $(D,\mathcal{D})$ is said to \emph{converge weakly} to $P$ if
$$
\int_Df\,dP_n\raw\int_Df\,dP,
$$
for all bounded, continuous $f$ from $D$ to $\R$; Denote this $P_n\stackrel{J_1}{\Rightarrow}P$. In spirit with Donsker's theorem, we prove weak convergence of travel times to Wiener measure $W$.

\begin{thm}[Functional Donsker theorem]\label{thm:Donsker}
If $\E[\tau_e^2]<\infty$, then
$$
\frac{T(I,v_{\lfloor nt\rfloor,i})-\mu\lfloor nt\rfloor}{\sigma\sqrt{n}}\stackrel{J_1}{\Rightarrow}W,\quad\text{as }n\raw\infty.
$$
\end{thm}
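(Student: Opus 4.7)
The plan is to carry out a functional version of the argument used in the proof of Theorem~\ref{int:CLT}. Since $T(I,v_{\lfloor nt\rfloor,i})$ differs from $T(I,\hat v_{\lfloor nt\rfloor})$ by at most a fixed finite number of passage times, and since by Lemma~\ref{lma:approx} both $|\Trnn-T(I,\hat v_n)|$ and $|\rnn-n|$ are $o(\sqrt n)$ pointwise, it will suffice to establish the functional convergence for the regenerated approximation
$$
Z_n(t)\,:=\,\frac{T(I,\hat v_{\rho_{\nu(\lfloor nt\rfloor)}})-\mu\rho_{\nu(\lfloor nt\rfloor)}}{\sigma\sqrt n},
$$
and then argue that the various error terms vanish uniformly on compact time intervals after division by $\sqrt n$.

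Up to the $n$-independent constant $T(I,\hat v_{\rho_0})-\mu\rho_0$, the process $Z_n(t)$ is the random-index partial sum $(\sigma\sqrt n)^{-1}\sum_{k=1}^{\nu(\lfloor nt\rfloor)}(\tSk-\mu S_k)$. By Lemma~\ref{lma:patch} and Proposition~\ref{prop:pmoments}, the variables $\xi_k:=\tSk-\mu S_k$ form an i.i.d.\ sequence with mean zero and variance $\sigma_\xi^2=\muS\,\sigma^2<\infty$, so the classical functional Donsker invariance principle gives
$$
S_n(s)\,:=\,\frac{1}{\sigma_\xi\sqrt n}\sum_{k=1}^{\lfloor ns\rfloor}\xi_k\;\stackrel{J_1}{\Rightarrow}\;W(s).
$$
Setting $\phi_n(t):=\nu(\lfloor nt\rfloor)/n$, Lemma~\ref{lma:stopptid} gives $\phi_n(t)\to t/\muS$ almost surely for each $t$; since $\phi_n$ is non-decreasing with continuous limit, a Dini-type argument upgrades this to locally uniform convergence. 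A random-time-change theorem for Skorohod convergence (e.g.\ Billingsley, \emph{Convergence of Probability Measures}, Thm.\ 14.4) then yields $S_n\circ\phi_n\stackrel{J_1}{\Rightarrow}W(\,\cdot\,/\muS)$. Using the Brownian scaling $W(\,\cdot\,/\muS)\stackrel{d}{=}W(\,\cdot\,)/\sqrt{\muS}$ and the identity $\sigma\sqrt{\muS}=\sigma_\xi$, this translates precisely to $Z_n\stackrel{J_1}{\Rightarrow}W$.

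It remains to show that the difference between $Z_n$ and the target process tends to zero in $J_1$. Decomposing as in the proof of Theorem~\ref{int:CLT}, this difference splits into the passage-time gap $T(I,\hat v_{\lfloor nt\rfloor})-T(I,\hat v_{\rho_{\nu(\lfloor nt\rfloor)}})$ and the drift correction $\mu(\lfloor nt\rfloor-\rho_{\nu(\lfloor nt\rfloor)})$. Both are bounded, uniformly in $t\in[0,T]$, by a quantity of the form $\max_{1\leq k\leq N_n}(\tilde T_k+\mu S_k)$, where $N_n=\nu(\lfloor nT\rfloor)+1$ is of order $n$ and $\tilde T_k$ are i.i.d.\ copies of a non-negative variable with finite second moment (via the domination by the $\{\sigma_e\}$-process used in the proof of Proposition~\ref{prop:pmoments}). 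Since the maximum of the first $n$ terms of an i.i.d.\ sequence with finite second moment is $o_p(\sqrt n)$, the error vanishes uniformly on $[0,T]$ after dividing by $\sigma\sqrt n$, hence also in the $J_1$ topology.

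The main obstacle is precisely this last step: the pointwise bounds furnished by Lemma~\ref{lma:approx} do not immediately give uniform control in $t$, and one has to use the monotonicity of $n\mapsto\nu(n)$ to reduce the $\sup_{t\in[0,T]}$ of the gap to a single maximum over an i.i.d.\ sequence of length $\sim n$, at which point standard maximum-moment arguments close the estimate.
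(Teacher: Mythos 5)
Your proposal is correct and follows essentially the same route as the paper: reduce to the stopped regenerative sums $T(I,\hat v_{\rho_{\nu(\lfloor nt\rfloor)}})-\mu\rho_{\nu(\lfloor nt\rfloor)}$, establish their $J_1$-convergence via an Anscombe-type functional CLT (the paper cites this as Lemma~\ref{lma:ADonsker}, whereas you rederive it from classical Donsker plus a random time change and Brownian scaling), and then kill the remainder uniformly on compacts. Your uniform error bound via a running maximum of identically distributed block variables with finite second moment is interchangeable with the paper's deterministic observation that $x_n/\sqrt n\to0$ implies $\max_{k\le bn}|x_k|/\sqrt n\to0$ applied to the pointwise estimate of Lemma~\ref{lma:approx}.
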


As for the point-wise Central Limit Theorem, there is an Anscombe version of Donsker's theorem (cf.\ \citet[Theorem 5.2.1]{gut09}), from which we will deduce Theorem \ref{thm:Donsker}.

\begin{lma}\label{lma:ADonsker}
Let $\{\xi_k\}_{k\geq1}$ be an i.i.d.\ sequence of random variables with zero mean and variance $\sigma_\xi^2$. Assume further that $\{\eta(n)\}_{n\geq0}$ is a non-decreasing
sequence of positive, integer valued random variables such that $\eta(n)/n\raw\theta$ almost surely as $n\raw\infty$. Then,
$$
\frac{1}{\sigma_\xi\sqrt{\theta n}}\sum_{k=1}^{\eta(\lfloor nt\rfloor)}\xi_k\stackrel{J_1}{\Rightarrow}W,\quad\text{as }n\raw\infty.
$$
\end{lma}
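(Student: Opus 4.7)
The plan is to reduce the lemma to the classical Donsker theorem for i.i.d.\ sequences via a random-time-change whose time-change converges to a deterministic limit. Define the step processes
\bea
Y_n(t) \,:=\, \frac{1}{\sigma_\xi\sqrt{n}}\sum_{k=1}^{\lfloor nt\rfloor}\xi_k, \qquad \psi_n(t) \,:=\, \frac{\eta(\lfloor nt\rfloor)}{n}.
\eea
Since $n\psi_n(t)$ is an integer equal to $\eta(\lfloor nt\rfloor)$, the process of interest is exactly $\tfrac{1}{\sqrt{\theta}}\,Y_n(\psi_n(t))$. It therefore suffices to show $Y_n\circ\psi_n \stackrel{J_1}{\Rightarrow} W(\theta\,\cdot\,)$ and then invoke the Brownian scaling identity $W(\theta\,\cdot\,)\stackrel{d}{=}\sqrt{\theta}\,W(\cdot)$.

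The classical Donsker theorem gives $Y_n\stackrel{J_1}{\Rightarrow} W$ on $D[0,\infty)$. For the time-change, each $\psi_n$ is non-decreasing by the monotonicity of $\eta$, and for every fixed $t\geq 0$,
\bea
\psi_n(t) \,=\,\frac{\eta(\lfloor nt\rfloor)}{\lfloor nt\rfloor}\cdot\frac{\lfloor nt\rfloor}{n}\,\raw\,\theta t,\quad\text{almost surely,}
\eea
by the hypothesis $\eta(n)/n\to\theta$ a.s. A Dini-type argument for a sequence of monotone functions with a continuous pointwise limit upgrades this to uniform convergence on compact subsets of $[0,\infty)$, almost surely.

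Because the a.s.\ limit of $\psi_n$ is the deterministic continuous function $t\mapsto\theta t$, the converging-together lemma yields joint weak convergence $(Y_n,\psi_n)\Rightarrow(W,\theta\,\cdot\,)$ in $D\times C$, with $C$ carrying the topology of uniform convergence on compact sets. I would then apply the continuous mapping theorem to the composition map $\Phi(y,\psi)(t):=y(\psi(t))$. The map $\Phi$ is continuous at $(W,\theta\,\cdot\,)$ almost surely: because $W$ has continuous paths, $J_1$-convergence to $W$ is equivalent to uniform convergence on compacts, and composing two sequences convergent uniformly on compacts with the outer limit continuous yields uniform convergence of the composition. Thus $Y_n\circ\psi_n\stackrel{J_1}{\Rightarrow} W(\theta\,\cdot\,)$, and Brownian scaling completes the proof.

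The main obstacle is verifying continuity of the composition map in the Skorohod/uniform setting, but once one reduces $J_1$-convergence to the continuous limit $W$ to uniform convergence on compacts, the rest of the argument is routine. The fact that $\eta(n)$ is integer-valued is used only to make the identification $n\psi_n(t)=\eta(\lfloor nt\rfloor)$ exact, and the monotonicity of $\eta$ is essential for the Dini upgrade from pointwise to uniform convergence. Since all these ingredients are standard, an alternative is simply to quote Gut's Theorem 5.2.1 directly, as the statement of the lemma already suggests.
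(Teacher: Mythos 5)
Your proof is correct. Note, however, that the paper does not actually prove Lemma \ref{lma:ADonsker} at all: it is stated as a known result and justified only by the citation to \citet[Theorem 5.2.1]{gut09}, exactly as your closing sentence anticipates. What you have written is a self-contained proof along the standard random-time-change route (Donsker for $Y_n$, the P\'olya/Dini upgrade of the monotone time changes $\psi_n$ to locally uniform a.s.\ convergence, the converging-together lemma to get joint convergence since the limit time change is deterministic, continuity of the composition map at a continuous limit path, and Brownian scaling $W(\theta\,\cdot)\stackrel{d}{=}\sqrt{\theta}\,W$); this is essentially the argument behind Gut's theorem, so the two approaches are not really in conflict -- yours just supplies the details the paper outsources. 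Two small points worth being explicit about if you write this up: the statement implicitly assumes $\theta>0$ (one divides by $\sqrt{\theta}$; in the application $\theta=1/\muS\in(0,\infty)$), and the converging-together step is what lets you avoid any independence assumption between $\{\xi_k\}$ and $\{\eta(n)\}$ -- which matters here, since in the paper's application $\eta=\nu$ is built from the same passage times as the increments. Both points are handled correctly by your argument as written.
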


\begin{proof}[Proof of Theorem \ref{thm:Donsker}]
Lemma \ref{lma:patch} and Proposition \ref{prop:pmoments} assure that $\{\tSk-\mu S_k\}_{k\geq1}$ is an i.i.d.\ sequence with zero mean and finite variance. From Lemma \ref{lma:ADonsker} it follows that
$$
\frac{T(I,\hat{v}_{\rho_{\nu(\lfloor nt\rfloor)}})-\mu\rho_{\nu(\lfloor nt\rfloor)}}{\sigma\sqrt{n}}\stackrel{J_1}{\Rightarrow}W,\quad\text{as }n\raw\infty.
$$
It suffices to prove that, as $n\raw\infty$,
\be\label{eq:Uconvergence}
\sup_{0\leq t\leq b}\left|\frac{T(I,v_{\lfloor nt\rfloor,i})-T(I,\hat{v}_{\rho_{\nu(\lfloor nt\rfloor)}})-\mu(\lfloor nt\rfloor-\rho_{\nu(\lfloor nt\rfloor)})}{\sigma\sqrt{n}}\right|\raw0,\quad\text{almost surely}.
\ee
It follows Lemma \ref{lma:approx} that, as $n\raw\infty$,
\be\label{eq:TnTrnn}
\left|\frac{T(I,v_{n,i})-\Trnn-\mu(n-\rnn)}{\sigma\sqrt{n}}\right|\raw0,\quad\text{almost surely}.
\ee
For any sequence of real numbers $\{x_n\}_{n\geq1}$ and $b\in\R_+$ it holds that
\bea
\lim_{n\raw\infty}\frac{x_n}{\sqrt{n}}=0\quad\Leftrightarrow\quad\lim_{n\raw\infty}\frac{\max_{k\leq bn}|x_k|}{\sqrt{n}}=0.
\eea
In fact, this improves (\ref{eq:TnTrnn}), and we get
$$
\frac{\max_{k\leq bn}\left|T(I,v_{k,i})-T(I,\hat{v}_{\rho_{\nu(k)}})-\mu(k-\rho_{\nu(k)})\right|}{\sigma\sqrt{n}}\raw0,\quad\text{almost surely},
$$
as $n\raw\infty$. But this is equivalent to (\ref{eq:Uconvergence}).
\end{proof}

\begin{remark}\label{rem:momentcond}
A closer look at the proofs of Theorem \ref{int:LLN}, \ref{int:CLT}, \ref{int:LIL} and~\ref{thm:Donsker} reveals that finite moment of $\tSk$, for $k\geq1$, of correct order is sufficient for the various modes of convergence for the sequence $\{T(\hat{v}_0,\hat{v}_n)\}_{n\geq1}$. According to Proposition \ref{prop:pmoments}, $\E[\tau_e^\alpha]<\infty$ may be replaced by $\E[Y_p^\alpha]<\infty$ (for given $\alpha$), where $p$ is the number of (edge) disjoint paths from $\hat{v}_0$ to $\hat{v}_1$, and $Y_p$ denotes the minimum of $p$ independent random variables distributed according to $\Pt$.
\end{remark}

\section{Monotonicity of mean and variance}\label{sec:monT}

In this section we will prove Theorem \ref{int:meanT} and \ref{int:varT}. Throughout this section, $\Delta$ will be assumed uniformly distributed on $\{0,1,\ldots,2M\}$. Clearly, the value of $\rho_I$ has no other effect than to the value of the constants $C$ and $C'$. For ease of presentation, we will therefore assume that $\rho_I=0$. The theory for stopped random walks tells us that $\E[\Trnn]=\mu n+C$, for some constant $C$, and $\Var(\Trnn)=\sigma^2n+o(n)$, for large $n$  (see e.g.\ \citet[Theorem 4.2.4]{gut09}). An essential additional effort is needed in order to improve the latter statement to the form $\Var(\Trnn)=\sigma^2n+C'$. What then remains in order to prove Theorem \ref{int:meanT} and \ref{int:varT}, is to show that the differences between $\E[T(I,v_{n,i})]$ and $\E[\Trnn]$, and between $\Var(T(I,v_{n,i}))$ and $\Var(\Trnn)$, converge as $n\raw\infty$. We will base the proofs of Theorem \ref{int:meanT} and \ref{int:varT} on Wald's lemma.

\begin{lma}[Wald's lemma]\label{lma:Wald}
Let $\xi_1,\xi_2,\ldots$ be i.i.d.\ random variables with mean $\mu_\xi$, and set $S_n=\sum_{k=1}^n\xi_k$. Let $N$ be a stopping time with $\E[N]<\infty$.
\begin{enumerate}[\quad a)]
\item $\E[S_N]=\mu_\xi\E[N]$.
\item If $\sigma_\xi^2=\Var(\xi_1)<\infty$, then $\E\left[(S_N-\mu_\xi N)^2\right]=\sigma_\xi^2\E[N]$.
\item If $X$ is independent of $\xi_1,\xi_2,\ldots$, then $\E[XS_N]=\mu_\xi\E[XN]$. In particular, $\Cov(X,S_N)=\mu_\xi\Cov(X,N)$.
\end{enumerate}
\end{lma}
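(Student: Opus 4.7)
The plan is to base all three parts on the identity
$$
S_N=\sum_{k=1}^\infty\xi_k\ind\{N\geq k\}
$$
together with the central observation that, for $N$ a stopping time with respect to a filtration $\F_n$, the event $\{N\geq k\}=\{N\leq k-1\}^c$ is $\F_{k-1}$-measurable and therefore independent of whatever is independent of $\F_{k-1}$ — in particular, of $\xi_k$. The relevant filtration is $\F_n=\sigma(\xi_1,\ldots,\xi_n)$ in parts a) and b), and the enlarged $\F_n=\sigma(X,\xi_1,\ldots,\xi_n)$ in part c). Each part then reduces to a Fubini-type interchange of summation and expectation.

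For part a), I would first verify absolute integrability via
$$
\E\Bigl[\sum_{k=1}^\infty|\xi_k|\ind\{N\geq k\}\Bigr]=\sum_{k=1}^\infty\E[|\xi_k|]P(N\geq k)=\E[|\xi_1|]\,\E[N]<\infty,
$$
after which the same computation without absolute values gives $\E[S_N]=\mu_\xi\E[N]$.

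For part b), set $Y_k:=\xi_k-\mu_\xi$ and expand
$$
(S_N-\mu_\xi N)^2=\sum_{k=1}^N Y_k^2+2\sum_{1\leq j<k\leq N}Y_jY_k.
$$
Part a), applied to the i.i.d.\ sequence $\{Y_k^2\}$, yields $\sigma_\xi^2\E[N]$ for the diagonal. For each off-diagonal pair $j<k$, the random variable $Y_j\ind\{N\geq k\}$ is $\F_{k-1}$-measurable and hence independent of the mean-zero $Y_k$, so $\E[Y_jY_k\ind\{N\geq k\}]=0$. To make the interchange of sum and expectation rigorous, I would first prove the identity for the bounded stopping time $N\wedge n$, where all sums are finite, and then pass to the limit using monotone convergence on the right side and $L^2$-convergence of the centred partial sums on the left, the latter following from the orthogonality of $\{Y_k\}$ and $\E[N]<\infty$.

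For part c), the argument is a small elaboration of part a): since $N$ is a stopping time for $\sigma(X,\xi_1,\ldots,\xi_n)$, the random variable $X\ind\{N\geq k\}$ is $\sigma(X,\xi_1,\ldots,\xi_{k-1})$-measurable and therefore independent of $\xi_k$, so $\E[X\xi_k\ind\{N\geq k\}]=\mu_\xi\E[X\ind\{N\geq k\}]$. Summing and interchanging (with the usual integrability assumption on $X$ that makes $\E[XN]$ meaningful) gives $\E[XS_N]=\mu_\xi\E[XN]$, and the covariance identity follows by subtracting $\mu_\xi\E[X]\E[N]$ from both sides and invoking part a). The only subtlety worth flagging is the interpretation of the filtration in part c): if $N$ depended only on the $\xi_k$, it would automatically be independent of $X$ and the identity would reduce to a triviality, so the substantive content of part c) resides precisely in allowing $N$ to be a stopping time for the enlarged filtration $\sigma(X,\xi_1,\ldots,\xi_n)$.
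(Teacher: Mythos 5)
Your proof is correct and is precisely the standard argument the paper relies on without writing out: the paper states Wald's lemma as a known result (cf.\ the references to \citet{gut09}) and only remarks that part \emph{c)} is ``a slight extension of the first part, proved in an analogous way,'' which is exactly what your treatment via the decomposition $S_N=\sum_{k\geq1}\xi_k\ind\{N\geq k\}$ and the enlarged filtration $\sigma(X,\xi_1,\ldots,\xi_n)$ makes precise. Your identification of the enlarged filtration as the substantive content of part \emph{c)} is also the right reading, since in the paper's application $\nu(n)$ is a stopping time for $\mathcal{F}_n=\sigma(\{(\rho_0,T(I,\hat{v}_{\rho_0})),(S_1,\tau_{S_1}),\ldots,(S_n,\tau_{S_n})\})$, which includes the variable playing the role of $X$.
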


The third part of the lemma is a slight extension of the first part, and proved in an analogous way. If $\mathcal{F}_n=\sigma(\{(\rho_0,T(I,\hat{v}_{\rho_0})),(S_1,\tau_{S_1}),\ldots,(S_n,\tau_{S_n})\})$, then it is immediate from the definition that $\nu(n)$ is a stopping time with respect to the sequence of $\sigma$-algebras $\{\mathcal{F}_n\}_{n\geq1}$.

The importance of $\Delta$ is the following. Regeneration may only occur every $2M+1$ levels; Introducing a shift uniformly distributed on $\{0,1,\ldots,2M\}$ makes 'regeneration possible' equally likely for every level. The proof of the following lemma clearly show the advantage of this.

\begin{lma}\label{lma:meannu}
Assume that $\rho_I=0$. Then for $n\geq0$,
$\quad\displaystyle
\E[\nu(n)]=\frac{n}{\muS}.
$
\end{lma}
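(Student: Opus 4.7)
The plan is to rewrite $\nu(n)$ as a sum of indicators over levels and then use the uniform distribution of $\Delta$ to evaluate each indicator's expectation. The key observation is that averaging over $\Delta$ makes each level equally likely to be one of the $n_j$'s.

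First I would unpack the recursive definition of $\{\rho_k\}_{k \geq 0}$. Since consecutive values $n_j,\, n_{j+1}$ differ by $2M+1$, the condition $n_m > \rho_{k-1} = n_{m_{k-1}} + M$ is equivalent to $m \geq m_{k-1} + 1$, so the recursion simply picks the next index $m$ with $A_{n_m}$ occurring. Thus $\{\rho_k - M\}_{k \geq 0}$ is the increasing enumeration of $\{n_j : j \geq 0,\, A_{n_j}\text{ occurs}\}$, and $\rho_m \geq n + M$ iff $n_{m_m} \geq n$. This gives
\[
\nu(n) \;=\; \big|\{j \geq 0 : A_{n_j}\text{ occurs and } n_j \leq n - 1\}\big| \;=\; \sum_{\ell = 0}^{n-1} \ind\{\ell \in N\}\,\ind\{A_\ell\text{ occurs}\},
\]
where $N := \{n_j : j \geq 0\}$, using that for each $\ell \geq 0$ there is at most one $j \geq 0$ with $n_j = \ell$.

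Next I would evaluate $P(\ell \in N)$. Since $\rho_I = 0$, we have $n_j = \Delta + j(2M+1)$, so $\ell \in N$ iff $\Delta$ equals the unique residue $r = \ell \bmod (2M+1) \in \{0, \ldots, 2M\}$ and $j = (\ell - r)/(2M+1) \geq 0$. The latter is automatic since $r \leq \ell$. As $\Delta$ is uniform on $\{0, 1, \ldots, 2M\}$, $P(\ell \in N) = 1/(2M+1)$ for every $\ell \geq 0$. Moreover $\Delta$ is independent of the passage times, so $\{\ell \in N\}$ is independent of $A_\ell$. Combining these with $P(A_\ell) = p_A$ and $\mu_S = (2M+1)/p_A$ (from Lemma~\ref{lma:patch}),
\[
\E[\nu(n)] \;=\; \sum_{\ell = 0}^{n-1} P(\ell \in N)\, P(A_\ell) \;=\; n \cdot \frac{p_A}{2M+1} \;=\; \frac{n}{\mu_S}.
\]

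The main potential obstacle is the bookkeeping needed to identify $\{\rho_k - M\}_{k \geq 0}$ with the ordered enumeration of successful $n_j$'s (carefully handling that the recursion uses strict inequality $n_m > \rho_{k-1}$); once that is in hand, the uniform distribution of $\Delta$ makes the expectation computation a one-line calculation. As an optional sanity check, Wald's identity applied to $\rho_{\nu(n)} = \rho_0 + \sum_{k=1}^{\nu(n)} S_k$ would reduce the claim to $\E[\rho_{\nu(n)}] - \E[\rho_0] = n$, which in turn also follows from the same level-counting argument.
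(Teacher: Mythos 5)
Your proof is correct and follows essentially the same route as the paper: both identify $\nu(n)$ with the number of indices $k\geq0$ for which $A_{n_k}$ occurs with $n_k<n$, and both exploit the uniformity of $\Delta$ on $\{0,\ldots,2M\}$ together with its independence from the passage times. Your level-by-level indicator sum is in fact a cleaner rendering of the paper's final step (which argues somewhat informally that $\E[\nu(n)]$ increases by $p_A/(2M+1)$ per level), so no gap remains.
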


\begin{proof}
Assume that $n\geq0$. We may interpret $\nu(n)$ as the number of regeneration points before (but not including) level $n+M$, that is, the number of $k\geq0$ such that $\Ank$ occurs for $n_k<n$. Since $n_0=\Delta$, this number is at most $n_A=\big\lfloor\frac{n+2M-\Delta}{2M+1}\big\rfloor$. The shift $\Delta$ is independent of $\{\tau_e\}_{e\in\Eb}$, and conditioned on $\Delta$, we can think of $\nu(n)$ as the number of successes in 
$n_A$ independent Bernoulli trials, each with success probability $p_A=P(\Ank)$. Conditioning on $\Delta$, we see that
\be\label{eq:meannu}
\E[\nu(n)]\;=\;p_A\E\left[\left\lfloor\frac{n+2M-\Delta}{2M+1}\right\rfloor\right].
\ee
If $n-\rho_I=(2M+1)k$, for some $k\geq0$, one realise from (\ref{eq:meannu}) that
$$
\E[\nu(n)]\;=\;\frac{p_A}{2M+1}(2M+1)k\;=\;\frac{n}{\muS},
$$
where the latter equality follows from the fact that $S_k$ is geometrically distributed with parameter $p_A$, times a factor $2M+1$, that is, $\muS=(2M+1)/p_A$. Again from (\ref{eq:meannu}), one realises that as $n$ increases from $(2M+1)k$ to $(2M+1)k+2M$, then $\E[\nu(n)]$ will have to increase with $p_A/(2M+1)$ for each step.
\end{proof}

\begin{proof}[{\bf Proof of Theorem \ref{int:meanT}}]
Assume that $\rho_I=0$. Wald's lemma together with Lemma \ref{lma:meannu} gives
\bea
\E\big[\Trnn\big]\,=\,\E\bigg[T(I,\hat{v}_{\rho_0})+\sum_{k=1}^{\nu(n)}\tSk\bigg]
\,=\,\E\big[T(I,\hat{v}_{\rho_0})\big]+\mu n.
\eea
It remains to prove that there is a finite constant $C=C(i,I,\G,\Pt)$ such that
\be\label{eq:meanTnTr}
\E\big[T(I,v_{n,i})-\Trnn\big]\raw C,\quad\text{as }n\raw\infty.
\ee
Arguments of the type we will use to prove (\ref{eq:meanTnTr}) will be used repeatedly in the proof of Theorem~\ref{int:varT}. For this reason, we present the argument in detail here. To make the argument clear, we will define a random variable to which $T(I,v_{n,i})-\Trnn$ converges in distribution. The limit $C$ will then equal the expectation of this random variable.

Recall that $n_k=\Delta+k(2M+1)$ for $k\ge0$, set $m_{n,k}:=n-(2M+1)k$ for $k\geq1$, and let
\bea
\begin{aligned}
r_+&:=M+\min\{n_k\geq0:\Ank\text{ occurs}\},\\
r_0&:=M+\max\{m_{0,k}<0:A_{m_{0,k}}\text{ occurs}\}.
\end{aligned}
\eea
Observe that $r_+$ denotes the first element of the sequence $\{\rho_k\}_{k\geq0}$ greater than zero, whereas $r_0$ is not defined along the same subsequence of the integers as $\{\rho_k\}_{k\geq0}$. Introduce
\bea
Y_{k,i}:=T(\hat{v}_{r_0},v_{k,i}),\quad\text{and}\quad Y_+:=T(\hat{v}_{r_0},\hat{v}_{r_+}),
\eea
and the events
\bea
\begin{aligned}
D_{T,n}&:=\{A_{m_{n,k}}\text{ occurs for some $k$ such that }0\leq m_{n,k}<n\},\\
D_{Y,n}&:=\{A_{m_{0,k}}\text{ occurs for some $k$ such that }0\leq m_{0,k}+n<n\}.
\end{aligned}
\eea
Clearly $P(D_{T,n})=P(D_{Y,n})\raw1$ as $n\raw\infty$. Moreover,
$$
\big(T(I,v_{n,i})-\Trnn\big)\cdot\ind_{D_{T,n}}\;\stackrel{d}{=}\;\big(Y_{0,i}-Y_+\big)\cdot\ind_{D_{Y,n}}.
$$
So, if we let $T^\ast=T(I,v_{n,i})-\Trnn$ and $Y^\ast=Y_{0,i}-Y_+$, then as $n\raw\infty$,
\be\label{eq:TYconvergence}
T^\ast\;=\;T^\ast(\ind_{D_{T,n}}+\ind_{D_{T,n}^c})\;\stackrel{d}{=}\;Y^\ast+T^\ast\cdot\ind_{D_{T,n}^c}-Y^\ast\cdot\ind_{D_{Y,n}^c}\;\stackrel{d}{\to}\;Y^\ast.
\ee
If, in addition, $\big\{T(I,v_{n,i})-\Trnn\big\}_{n\geq1}$ is uniformly integrable, then the convergence carries over in mean, and we have proved~\eqref{eq:meanTnTr} with $C=\E\left[Y_{0,i}-Y_+\right]$.
To deduce uniform integrability, note that subadditivity gives
\be\label{eq:TvTvnbound}
T(I,v_{n,i})-\Trnn\;\leq\; T(v_{n,i},\hat{v}_n)+\sum_{j=n+1}^{\rnn}T(\hat{v}_{j-1},\hat{v}_j).
\ee
But, the distribution of the right-hand side of (\ref{eq:TvTvnbound}) does not depend on $n$. Thus, it suffices to see that it has finite expectation. This is easily achieved in an analogous way as in~\eqref{eq:EtSkbound} in the proof of Proposition \ref{prop:pmoments}, conditioning on $\Lambda_k=\{\rnn-n=k\}$. We omit the details.
\end{proof}

The proof of Theorem \ref{int:varT} needs considerable extra work, due to arising covariance terms. Moment convergence arguments similar to the one used to prove (\ref{eq:meanTnTr}) will be used repeatedly.

\begin{proof}[{\bf Proof of Theorem \ref{int:varT}}]
Assume that $\rho_I=0$. To begin with,
\be\label{eq:varTnTrnn}
\begin{split}
\Var\big(T(I,v_{n+2M,i})\big)\;&=\;\Var\left(\Trnn-\mu\rnn\right)\\
&\quad\;+\;\Var\left(T(I,v_{n+2M,i})-\Trnn+\mu\rnn\right)\\
&\quad\;+\;2\Cov\left(\Trnn-\mu\rnn,T(I,v_{n+2M,i})-\Trnn\right)\\
&\quad\;+\;2\mu\Cov\big(\Trnn,\rnn\big)-2\mu^2\Var\left(\rnn\right).
\end{split}
\ee
We will have to treat each of the terms on the right-hand side one by one. Consider the first term in the right-hand side in \eqref{eq:varTnTrnn}, and note that
\bea
\begin{split}
\Var\left(\Trnn-\mu\rnn\right)\;&=\;\Var\bigg(T(I,\hat{v}_{\rho_0})-\mu\rho_0+\sum_{k=1}^{\nu(n)}(\tSk-\mu S_k)\bigg)\\
&=\;\E\bigg[\bigg(\sum_{k=1}^{\nu(n)}(\tSk-\mu S_k)\bigg)^2\bigg]+\Var\big(T(I,\hat{v}_{\rho_0})-\mu\rho_0\big)\\
&\quad\;+\;2\Cov\bigg(\sum_{k=1}^{\nu(n)}(\tSk-\mu S_k),T(I,\hat{v}_{\rho_0})-\mu\rho_0\bigg)\\
\end{split}
\eea
So, an application of both part \emph{b)} and \emph{c)} of Wald's lemma, together with Lemma \ref{lma:meannu}, yield
\bea
\begin{split}
\Var\left(\Trnn-\mu\rnn\right)\;&=\;\Var\left(\tSk-\mu S_k\right)\E[\nu(n)]+\Var\big(T(I,\hat{v}_{\rho_0})-\mu\rho_0\big)\\
&\quad\;+\;\E[\tSk-\mu S_k]\Cov\big(\nu(n),T(I,\hat{v}_{\rho_0})-\mu\rho_0\big)\\
&=\;\sigma^2n+\Var\big(T(I,\hat{v}_{\rho_0})-\mu\rho_0\big).
\end{split}
\eea
Next, to conclude that $\Var(\rnn)$ is constant, interpret $\rnn$ as the level of the first regeneration after level $n$. Since a regeneration is equally likely to occur at any level, due to the shift variable $\Delta$, it follows that $\Var(\rnn)=\Var(\rnn-n)$ is independent of $n$, and therefore constant.

The remaining three terms in the right-hand side of (\ref{eq:varTnTrnn}) will in some way or other need an argument similar to that used to prove (\ref{eq:meanTnTr}). Recall the notation used for that purpose.

\begin{claim}
$\quad\displaystyle\Var\left(T(I,v_{n+2M,i})-\Trnn+\mu\rnn\right)\,=\,\Var(Y_{2M,i}-Y_++\mu r_+)+o(1)$.
\end{claim}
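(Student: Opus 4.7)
The plan is to reduce the claim to convergence of the second moment of a centered version of $T(I,v_{n+2M,i})-\Trnn+\mu\rnn$, and to obtain such convergence from distributional convergence together with uniform integrability of the squares. Set $Z_n:=T(I,v_{n+2M,i})-\Trnn+\mu(\rnn-n)$ and $Z_\infty:=Y_{2M,i}-Y_++\mu r_+$. Since $\mu n$ is deterministic, $\Var\bigl(T(I,v_{n+2M,i})-\Trnn+\mu\rnn\bigr)=\Var(Z_n)$, so it suffices to show that $Z_n\stackrel{d}{\to}Z_\infty$ and that $\{Z_n^2\}_{n\ge1}$ is uniformly integrable; combining these yields $\E[Z_n]\to\E[Z_\infty]$ and $\E[Z_n^2]\to\E[Z_\infty^2]$, hence $\Var(Z_n)\to\Var(Z_\infty)$.

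For the convergence in distribution, I would mirror the derivation of (\ref{eq:TYconvergence}). The key input is that $\Delta$ being uniformly distributed on $\{0,1,\ldots,2M\}$ makes the distribution of the regenerative geometry around level $n$ translation-invariant. Define $D_{T,n}$ and $D_{Y,n}$ as in the proof of Theorem~\ref{int:meanT}, both with probability tending to $1$. On $D_{T,n}$ the last regeneration point at or before level $n$ exists, and Lemma~\ref{lma:Ank} forces both $T(I,v_{n+2M,i})$ and $\Trnn$ to pass through that point, so subtracting shows that $\bigl(T(I,v_{n+2M,i})-\Trnn,\,\rnn-n\bigr)\cdot\ind_{D_{T,n}}$ has the same joint distribution as $(Y_{2M,i}-Y_+,\,r_+)\cdot\ind_{D_{Y,n}}$. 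Multiplying the second coordinate by $\mu$, adding it to the first, and then discarding the indicators exactly as in (\ref{eq:TYconvergence}) yields $Z_n\stackrel{d}{\to}Z_\infty$.

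For the uniform integrability of $\{Z_n^2\}$, subadditivity gives $|T(I,v_{n+2M,i})-\Trnn|\leq T(v_{n+2M,i},\hat{v}_{\rnn})$, which in turn is dominated by a sum of $T(\hat{v}_{j-1},\hat{v}_j)$ over at most $|\rnn-n-2M|+1\leq S_{\nu(n)}+M$ adjacent regenerative links; similarly $|\rnn-n|\leq S_{\nu(n)}+2M$. The joint distribution of this bound depends only on the local regenerative structure near level $n$, which by the uniformity of $\Delta$ and the i.i.d.\ nature of $\{(\tSk,S_k)\}$ is stationary in $n$. To verify $\sup_n\E[Z_n^2]<\infty$, I dominate passage times by the $\sigma_e$ family introduced in the proof of Proposition~\ref{prop:pmoments}(b) and apply the double-sum bound of (\ref{eq:EtSkbound}): under $\E[\tau_e^2]<\infty$ one has $\E[Y_p^2]<\infty$, and by the exponential tail in Proposition~\ref{prop:pmoments}(a), $\E[S_k^3]<\infty$, which together give finite second moment of the dominating random variable.

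The main technical obstacle will be the clean identification of the joint distributional equality on $D_{T,n}$ and $D_{Y,n}$: the argument in the proof of Theorem~\ref{int:meanT} treated only the first coordinate, whereas here the joint law with $\rnn-n$ is required. Once this is settled together with the uniform second-moment bound, convergence of the first two moments of $Z_n$ gives $\Var(Z_n)\to\Var(Z_\infty)=\Var(Y_{2M,i}-Y_++\mu r_+)$, which is the claim.
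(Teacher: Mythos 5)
Your proposal follows essentially the same route as the paper: the variance is reduced to the centered quantity $T(I,v_{n+2M,i})-\Trnn+\mu(\rnn-n)$, its convergence in distribution to $Y_{2M,i}-Y_++\mu r_+$ is obtained by the same partition on $D_{T,n}$ and $D_{Y,n}$ as in \eqref{eq:TYconvergence}, and uniform integrability of the squares is derived from the subadditivity bound \eqref{eq:TvTvnbound} together with the moment estimates of Proposition~\ref{prop:pmoments}. The only difference is that you spell out the joint distributional identity with $\rnn-n$ and the moment bookkeeping more explicitly than the paper does, which is a correct and welcome elaboration rather than a deviation.
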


To prove the claim, note that we in an analogous way as in (\ref{eq:TYconvergence}) may divide into cases whether $D_{T,n}$ and $D_{Y,n}$ occurs or not, to show that as $n\to\infty$
$$
T(I,v_{n+2M,i})-\Trnn+\mu(\rnn-n)\,\stackrel{d}{\raw}\,Y_{2M,i}-Y_++\mu r_+.
$$
It remains to prove uniform integrability of $\big\{\big(T(I,v_{n+2M,i})-\Trnn+\mu(\rnn-n)\big)^2\big\}_{n\geq1}$. This can be proved similar to the uniform integrability needed for (\ref{eq:meanTnTr}), via~\eqref{eq:TvTvnbound}. It follows that for $r=1,2$, as $n\to\infty$
$$
\E\big[\big(T(I,v_{n+2M,i})-\Trnn+\mu(\rnn-n)\big)^r\big]\raw\E\big[(Y_{2M,i}-Y_++\mu r_+)^r\big],
$$
from which the claim follows.

\begin{claim}
$\quad\displaystyle\Cov\big(\Trnn,\rnn\big)\,=\,\Cov(Y_+,r_+)+o(1)$.
\end{claim}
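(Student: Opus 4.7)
The strategy mirrors the approach of the previous claim (weak convergence combined with uniform integrability), adapted to the covariance setting. The obstacle here is that neither $\Trnn$ nor $\rnn$ admits a distributional limit when centered, since each has variance of order $n$; we must first reduce to an object whose covariance structure is bounded.

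I would begin with the identity
$$\Cov(\Trnn,\rnn) \,=\, \Cov(\Trnn - \mu\rnn,\rnn) + \mu\Var(\rnn),$$
which, combined with the previously-established fact that $\Var(\rnn) = \Var(r_+)$, reduces the claim to showing
$$\Cov(\Trnn - \mu\rnn,\rnn) \,=\, \Cov(Y_+ - \mu r_+,r_+) + o(1).$$
Using the regenerative decomposition $\Trnn - \mu\rnn = Z_0 + \sum_{k=1}^{\nu(n)} U_k$ with $Z_0 := T(I,\hat v_{\rho_0}) - \mu\rho_0$ and $U_k := \tSk - \mu S_k$ (zero-mean i.i.d.), together with $\rnn = \rho_0 + \sum_{k=1}^{\nu(n)} S_k$, bilinear expansion and Wald's lemma part (c) dispose of the cross-terms involving $Z_0$ and $\rho_0$ as $O(1)$. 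The main term $\Cov(\sum_{k=1}^{\nu(n)} U_k, \sum_{k=1}^{\nu(n)} S_k)$ must be handled separately: using that $\nu(n)$ is a stopping time for the filtration $\{\mathcal{F}_k\}$ generated by the i.i.d.\ blocks, a careful bilinear expansion yields
$$\Cov\Big(\sum_{k=1}^{\nu(n)} U_k,\sum_{k=1}^{\nu(n)} S_k\Big) \,=\, \E[\nu(n)]\,\Cov(U_1,S_1) + \mu_S \sum_{k\geq1} \E[U_k(\nu(n)-k)^+],$$
where both terms are individually $\Theta(n)$ but cancel at leading order: the elementary renewal identity $\E[(\nu(n)-k)^+ \mid \mathcal{F}_k] = (n+M-\rho_k)^+/\mu_S + O(1)$ combined with the clean identity $\E[U_k\rho_k] = \Cov(U_1,S_1)$ (which follows from the i.i.d.\ block structure) gives exact cancellation of the linear-in-$n$ contributions.

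The $O(1)$ residual encodes the boundary behaviour of the renewal process at level $n+M$. By the translation-invariance provided by the uniform $\Delta$ shift, the distribution of this residual matches that of the corresponding boundary structure near level $0$, which is captured precisely by the pair $(Y_+ - \mu r_+, r_+)$. Uniform integrability of the relevant centered products, obtained via subadditivity bounds in the spirit of \eqref{eq:TvTvnbound} together with $\E[\tau_e^2]<\infty$, then promotes the distributional convergence to convergence of the covariance, yielding $\Cov(\Trnn - \mu\rnn,\rnn) \to \Cov(Y_+ - \mu r_+, r_+)$. Recombining with $\mu\Var(\rnn) = \mu\Var(r_+)$ recovers $\Cov(Y_+, r_+)$, as claimed. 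The main obstacle is isolating the $O(1)$ residual from the two exactly-cancelling $\Theta(n)$ contributions via the renewal-theoretic analysis; once this is done, the $\Delta$-induced translation-invariance makes identifying the residual with $\Cov(Y_+ - \mu r_+, r_+)$ essentially automatic.
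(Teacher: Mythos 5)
Your reduction to $\Cov(\Trnn-\mu\rnn,\rnn)$ and the stopped-sum identity are fine as far as they go, but there is a genuine gap exactly where you declare victory. Write $U_k:=\tSk-\mu S_k$. After the two $\Theta(n)$ contributions cancel, the $O(1)$ residual is a sum over $k$ of boundary corrections: the error in replacing $\E[(\nu(n)-k)^+\mid\mathcal F_k]$ by $(n+M-\rho_k)/\muS$, and the error in the ``clean identity'' $\E[U_k\rho_k\ind_{\{\nu(n)\ge k\}}]=\Cov(U_1,S_1)P(\nu(n)\ge k)$, both of which are nonzero precisely because $U_k$ is correlated with $S_k$ and hence with the event $\{\rho_k<n+M\}$. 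This sum is dominated by indices $k$ near $\nu(n)$ and plausibly converges, but showing that its limit equals $\Cov(Y_+-\mu r_+,r_+)$ is the entire content of the claim. The uniform shift $\Delta$ tells you the residual converges to \emph{some} fixed functional of the boundary renewal structure; it does not identify that functional as the covariance of the specific pair $(Y_+-\mu r_+,r_+)$, and you offer no argument for the identification. As written you obtain at best $\Cov(\Trnn,\rnn)=c+o(1)$ for an unidentified constant $c$ (and even existence of the limit requires the omitted bookkeeping), not the stated formula.

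The paper's proof avoids all of this. With $r_n$ the last regeneration level before $n$, it splits $\Cov(\Trnn,\rnn-n)=\Cov(\Trnn-T(I,\hat v_{r_n}),\rnn-n)+\Cov(T(I,\hat v_{r_n}),\rnn-n)$. The first summand involves only the piece straddling level $n$: partitioning on $D_{T,n}$ and $D_{Y,n}$ as in the proof of \eqref{eq:meanTnTr} gives $(\Trnn-T(I,\hat v_{r_n}),\rnn-n)\stackrel{d}{\to}(Y_+,r_+)$, and uniform integrability of the squares upgrades this to convergence of the covariance to $\Cov(Y_+,r_+)$. The second summand vanishes because, on $D_{T,n}$, $T(I,\hat v_{r_n})$ depends only on edge weights below level $n$ while $\rnn-n$ is independent of these, so the expectation of the product factorises there and the covariance collapses to a term supported on $D_{T,n}^c$, which tends to zero. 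No centering by $\mu$, no cancellation of divergent terms, and the limit is identified directly. I recommend reworking your argument along these lines, or else actually carrying out the renewal-theoretic computation you are currently asserting.
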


To prove the second claim, set $r_n:=M+\max\{m_{n,k}<n:A_{m_{n,k}}\text{ occurs}\}$, and rewrite
\bea
\begin{split}
\Cov\big(\Trnn,\rnn\big)\;&=\;\Cov\big(\Trnn-T(I,\hat{v}_{r_n}),\rnn-n\big)\\
&\quad\;+\;\Cov\big(T(I,\hat{v}_{r_n}),\rnn-n\big).
\end{split}
\eea
It is easy to see that $\rnn-n\stackrel{d}{=}r_+$ for $n\geq0$. Partitioning on whether $D_{T,n}$ and $D_{Y,n}$ occur or not, we find that, as $n\raw\infty$,
$$
\Trnn-T(I,\hat{v}_{r_n})\stackrel{d}{\raw}Y_+,\quad\text{and}\quad\big(\Trnn-T(I,\hat{v}_{r_n})\big)(\rnn-n)\stackrel{d}{\raw}Y_+r_+.
$$
Uniform integrability of $\{(\rnn-n)^2\}_{n\geq1}$ and $\big\{\big(\Trnn-T(I,\hat{v}_{r_n})\big)^2\big\}_{n\geq1}$ is possible to deduce in a similar way as before, conditioning on $\Lambda_k=\{\rnn-r_n=k\}$. This implies that also $\big\{\big(\Trnn-T(I,\hat{v}_{r_n})\big)(\rnn-n)\big\}_{n\geq1}$ is uniformly integrable. We conclude that, as $n\to\infty$
\bea
\Cov\big(\Trnn-T(I,\hat{v}_{r_n}),\rnn-n\big)\;\raw\;\E\left[Y_+r_+\right]-\E\left[Y_+\right]\E\left[r_+\right]\;=\;\Cov(Y_+,r_+).
\eea
On the event $D_{T,n}$, $T(I,\hat{v}_{r_n})$ depends on passage times below level $n$, but not on $\Delta$, whereas $\rnn-n$ is independent of passage times below level $n$, and hence on $D_{T,n}$. It follows that
$$
\E\big[T(I,\hat{v}_{r_n})(\rnn-n)\cdot\ind_{D_{T,n}}\big]\,=\,\E\big[T(I,\hat{v}_{r_n})\cdot\ind_{D_{T,n}}\big]\E[r_+].
$$
In particular,
\bea
\begin{split}
\Cov\big(T(I,\hat{v}_{r_n}),\rnn-n\big)\;&=\;\E\big[T(I,\hat{v}_{r_n})(\rnn-n)\cdot\ind_{D_{T,n}}\big]-\E\big[T(I,\hat{v}_{r_n})\cdot\ind_{D_{T,n}}\big]\E[r_+]\\
&\quad\;+\;\E\big[T(I,\hat{v}_{r_n})(\rnn-n)\cdot\ind_{D_{T,n}^c}\big]-\E\big[T(I,\hat{v}_{r_n})\cdot\ind_{D_{T,n}^c}\big]\E[r_+]\\
&=\;\E\big[T(I,\hat{v}_{r_n})\big(\rnn-n-\E[r_+]\big)\cdot\ind_{D_{T,n}^c}\big].
\end{split}
\eea
As $n\raw\infty$, this expression vanishes, since on $D_{T,n}^c$ then $r_n\leq M$, and we may obtain an upper bound on $T(I,\hat{v}_{r_n})\big(\rnn-n-\E[r_+]\big)$ as in the proof of Proposition~\ref{prop:pmoments}. The claim follows.

\begin{claim}
$\quad\displaystyle\lim_{n\to\infty}\displaystyle\Cov\Big(\Trnn-\mu\rnn,T(I,v_{n+2M,i})-\Trnn\Big)$ exists finitely.
\end{claim}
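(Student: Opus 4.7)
The plan is to follow the template of the two preceding claims: decompose the covariance into pieces whose limits can be identified using the regenerative structure, and then promote convergence in distribution to convergence of the covariance via uniform integrability. Write $\Trnn - \mu \rnn = X_n^- + V_n$ with
\begin{equation*}
X_n^- := T(I,\hat v_{\rho_{\nu(n)-1}}) - \mu\,\rho_{\nu(n)-1}, \qquad V_n := \tau_{S_{\nu(n)}} - \mu\, S_{\nu(n)},
\end{equation*}
and set $W_n := T(I,v_{n+2M,i}) - \Trnn$; bilinearity of covariance then gives
\begin{equation*}
\Cov(\Trnn - \mu\rnn,\, W_n) \;=\; \Cov(X_n^-,W_n) + \Cov(V_n,W_n),
\end{equation*}
and I handle the two terms separately.

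The pair $(V_n, W_n)$ is a functional of passage times only in a bounded window around level $n+M$, so by the shift invariance afforded by $\Delta$ and the i.i.d.\ structure of $\{A_{n_k}\}$ its joint distribution converges as $n\to\infty$. Partitioning on $D_{T,n}$ as in the two preceding claims identifies the limit, and Cauchy--Schwarz applied together with uniform bounds on $\E[V_n^2]$ (via $\E[\tau_e^2] < \infty$ and Proposition~\ref{prop:pmoments}, in the style of~\eqref{eq:EtSkbound}) and $\E[W_n^2]$ (already shown uniformly bounded in the first claim of this proof, via~\eqref{eq:TvTvnbound}) yields the convergence of $\E[V_nW_n]$. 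Since $\E[V_n]$ and $\E[W_n]$ likewise converge---the first by Wald's lemma combined with the constancy of $\E[\Trnn - \mu\rnn]$, the second by the proof of Theorem~\ref{int:meanT}---so does $\Cov(V_n,W_n)$.

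For $\Cov(X_n^-,W_n)$ the crucial observation is that $\hat v_{\rho_{\nu(n)-1}}$ is a regeneration point lying at a level at least $M$ below both $\hat v_{\rnn}$ and $v_{n+2M,i}$. Writing, via Lemma~\ref{lma:Ank} applied at this regeneration point, $W_n = T(\hat v_{\rho_{\nu(n)-1}},v_{n+2M,i}) - T(\hat v_{\rho_{\nu(n)-1}},\hat v_{\rnn})$, one sees that $X_n^-$ and $W_n$ are conditionally independent given the $\sigma$-algebra $\mathcal H_n$ generated by $\rho_{\nu(n)-1}$ together with the (finitely many) passage times on the edges of the associated regeneration event. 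Consequently
\begin{equation*}
\Cov(X_n^-,W_n) \;=\; \Cov\!\big(\E[X_n^-\mid\mathcal H_n],\, \E[W_n\mid\mathcal H_n]\big).
\end{equation*}
The usual shift-invariance argument (partitioning again on $D_{T,n}$) shows that both conditional means are bounded functions of stationary overshoot/undershoot quantities near level $n+M$, and hence the right-hand side converges to a finite limit.

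The chief obstacle is this second term: since $\Var(X_n^-) = O(n)$ while $\Var(W_n) = O(1)$, a naive Cauchy--Schwarz bound yields only $O(\sqrt n)$, which cannot produce even a finite limit. Exploiting the regenerative conditional-independence structure from Lemma~\ref{lma:Ank} to reduce matters to a covariance of two uniformly bounded, asymptotically stationary conditional means is what makes the argument work. Once both contributions converge, their sum gives the finite limit asserted in the claim.
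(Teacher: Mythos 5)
Your decomposition $\Trnn-\mu\rnn=X_n^-+V_n$ and your treatment of $\Cov(V_n,W_n)$ are sound and parallel what the paper does for its first boundary term ($\E[(\tau_{S_{\nu(n)}}-\mu S_{\nu(n)})T^\ast\ind_{\{\nu(n)\ge1\}}]$ converges by distributional convergence plus uniform integrability of local quantities near level $n+M$). The conditional-independence observation for $\Cov(X_n^-,W_n)$ is also correct as far as it goes. The gap is in the final step: you assert that \emph{both} conditional means $\E[X_n^-\mid\mathcal{H}_n]$ and $\E[W_n\mid\mathcal{H}_n]$ are ``bounded functions of stationary overshoot/undershoot quantities near level $n+M$,'' justified by ``the usual shift-invariance argument (partitioning again on $D_{T,n}$).'' This is fine for $\E[W_n\mid\mathcal{H}_n]$, which is genuinely local, but not for $\E[X_n^-\mid\mathcal{H}_n]$: conditionally on $\rho_{\nu(n)-1}=l$ and the window configuration, this equals $\E[T(I,\hat{v}_l)-\mu l\mid A_{l-M},\,\omega]$, a \emph{global} renewal-theoretic quantity depending on the entire history from level $0$ to level $l$ (and on $\Delta$, which is exactly the channel through which the limiting covariance is nonzero). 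Its convergence as $l\to\infty$, its uniform square-integrability, and its joint convergence with the local data near level $n+M$ are precisely the second-order renewal facts the claim is about; partitioning on $D_{T,n}$ only controls whether a regeneration occurs at all and does not deliver them. Without this, your argument for $\Cov(X_n^-,W_n)$ is circular in spirit: you have reduced the hard covariance to another quantity whose convergence is equally hard.

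The paper avoids this by a different accounting. It splits $\Trnn-\mu\rnn$ into the initial segment $T(I,\hat{v}_{\rho_0})-\mu\rho_0$ plus the full stopped sum $\sum_{k=1}^{\nu(n)}(\tSk-\mu S_k)$, then \emph{reverses} the direction of the walk: viewed from $\rho_{\nu(n)-1}$ backwards, $T^\ast$ is independent of the reversed increments and the reversed index is a stopping time, so Wald's lemma (parts \emph{a)} and \emph{c)}) shows the bulk $\sum_{k=0}^{\nu(n)-1}$ contributes \emph{exactly zero} covariance. What survives are two boundary increments, $\tau_{S_{\nu(n)}}-\mu S_{\nu(n)}$ (handled as you handle $V_n$) and $\tau_{S_0}-\mu S_0$ (shown to vanish by conditional independence), together with the initial-segment term $\Cov(T(I,\hat{v}_{\rho_0})-\mu\rho_0,T^\ast)$, which is treated via an independent copy $\{\tau_e'\}$ sharing only $\Delta$. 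If you want to salvage your route, you would need to prove that $g(l,\omega):=\E[T(I,\hat{v}_l)-\mu l\mid A_{l-M},\omega]$ converges boundedly as $l\to\infty$; the cleanest way to do that is essentially the paper's reversed Wald identity, so you may as well use it directly.
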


Once this final claim is proved, we have proved existence of a finite constant $C'$ such that
$$
\Var\big(T(I,v_{n+2M,i})\big)\,=\,\sigma^2n+C'+o(1),\quad\text{as }n\raw\infty,
$$
and hence the theorem. To see that the claim is true, we will `reverse' our point of view in the following sense.
The sequence $\{\tSk-\mu S_k\}_{k=1}^{\nu(n)}$ has until now been considered as a sequence started at $k=1$ and stopped at $k=\nu(n)$. But, we can equally well see $\{\tSk-\mu S_k\}_{k=0}^{\nu(n)-1}$ as a sequence in the opposite direction (where $S_0=\rho_0-\rho_{-1}$ and $\tau_{S_0}=T(\hat v_{\rho_{-1}},\hat v_{\rho_0})$). That is, as a sequence started at the first point of regeneration `left' of level $n+M$, and that is stopped at the first point of regeneration `left' of level $0$.

Let $T^\ast=T(I,v_{n+2M,i})-\Trnn$. On the event $\{\nu(n)\geq1\}$, $T^\ast$ may be expressed as $T(\hat{v}_{\rho_{\nu(n)-1}},v_{n+2M,i})-T(\hat{v}_{\rho_{\nu(n)-1}},\hat{v}_{\rho_{\nu(n)}})$ and is independent of $\tSk-\mu S_k$ for $k<\nu(n)$. The event $\{\nu(n)\geq1\}$ is itself independent of $\{\tSk-\mu S_k\}_{k\geq1}$. This allows us to apply the first and later the third part of Wald's lemma to obtain
\bea
\begin{aligned}
\Cov\bigg(\sum_{k=1}^{\nu(n)}\tSk-\mu S_k,T^\ast\bigg)\;&=\;\E\bigg[\sum_{k=1}^{\nu(n)}(\tSk-\mu S_k)\,T^\ast\bigg]\;=\;\E\bigg[\sum_{k=1}^{\nu(n)}(\tSk-\mu S_k)\,T^\ast\,\ind_{\{\nu(n)\ge1\}}\bigg]\\
&=\;\E\big[(\tau_{S_{\nu(n)}}-\mu S_{\nu(n)})T^\ast\ind_{\{\nu(n)\ge1\}}\big]-\E\big[(\tau_{S_0}-\mu S_0)T^\ast\ind_{\{\nu(n)\ge1\}}\big].
\end{aligned}
\eea
Let $r_-:=M+\max\{n_k<0:\Ank\text{ occurs}\}$, $Y_-:=T(\hat{v}_{r_-},\hat{v}_{r_+})$ and $Z_{k,i}:=T(\hat{v}_{r_-},v_{k,i})$. Note that
$$
(\tau_{S_{\nu(n)}}-\mu S_{\nu(n)})\,T^\ast\cdot\ind_{\{\nu(n)\geq1\}}\;\stackrel{d}{=}\;\big(Y_--\mu(r_+-r_-)\big)\big(Z_{2M,i}-Y_-\big)\cdot\ind_H,
$$
where $H=\{\Ank\text{ occurs for some }\Delta-n\leq n_k<0\}$. That the random variables $Y_-$ and $Z_{2M,i}$ have finite variance is concluded as in the proof of Proposition~\ref{prop:pmoments} (conditioning on $\Lambda_n=\{r_+-r_-=n\}$). Thus, an application of the Monotone Convergence Theorem shows that
$$
\E\big[(\tau_{S_{\nu(n)}}-\mu S_{\nu(n)})\,T^\ast\cdot\ind_{\{\nu(n)\ge1\}}\big]\,=\,\E\big[(Y_--\mu(r_+-r_-))(Z_{2M,i}-Y_-)\big]+o(1),
$$
which is then finite. Moreover, due to the conditional independence between $\tau_{S_0}-\mu S_0$ and $T^\ast$, given $\{\nu(n)\ge1\}$, it follows that
$$
\E\big[(\tau_{S_0}-\mu S_0)\,T^\ast\cdot\ind_{\{\nu(n)\ge1\}}\big]\,=\,\E\big[(\tau_{S_0}-\mu S_0)\cdot\ind_{\{\nu(n)\ge1\}}\big]\E\big[\,T^\ast\cdot\ind_{\{\nu(n)\ge1\}}\big]P(\nu(n)\ge1)^{-1},
$$
which as $n\to\infty$ vanishes, again via an application of the Monotone Convergence Theorem.

It remains to measure the correlation between $T(I,\hat v_{\rho_0})-\mu\rho_0$ and $T^\ast$. Let $Z_{2M,i}^\prime$ and $Y_-^\prime$ be defined in the same way as $Z_{2M,i}$ and $Y_-$ above, but now for a set of passage times $\{\tau_e^\prime\}_{e\in\Eb}$ independent of $\{\tau_e\}_{e\in\Eb}$ (that defines $T(I,\hat{v}_{\rho_0})-\mu\rho_0$), but with the same $\Delta$. By conditioning on the events $\{\nu(n)\geq1\}$ (with respect to $\{\tau_e\}_{e\in\Eb}$) and $H$ (with respect to $\{\tau_e^\prime\}_{e\in\Eb}$), we see that
\bea
T^\ast\stackrel{d}{\raw}(Z_{2M,i}^\prime-Y_-^\prime)\quad\text{and}\quad
T^\ast\big(T(I,\hat{v}_{\rho_0})-\mu\rho_0\big)\stackrel{d}{\raw}(Z_{2M,i}^\prime-Y_-^\prime)\big(T(I,\hat{v}_{\rho_0})-\mu\rho_0\big),
\eea
as $n\to\infty$. That $\big\{\big(T(I,v_{n+2M,i})-\Trnn\big)^2\big\}_{n\geq1}$ is uniformly integrable was argued for during the proof of the first claim, and $T(I,\hat{v}_{\rho_0})-\mu\rho_0$ has finite variance. Consequently also $\big\{\big(T(I,v_{n+2M,i})-\Trnn\big)(T(I,\hat{v}_{\rho_0})-\mu\rho_0)\big\}_{n\geq1}$ is uniform integrable, and we have that
\bea
\Cov\big(T(I,\hat{v}_{\rho_0})-\mu\rho_0,T^\ast\big)\,=\,\Cov\big(T(I,\hat{v}_{\rho_0})-\mu\rho_0,Z_{2M,i}^\prime-Y_-^\prime\big)+o(1).
\eea
This ends the proof of the claim, and hence the theorem.
\end{proof}

\section{Properties of time constants}\label{sec:geodesic}

\subsection{Comparing a tube to the lattice}

We begin with a comparison between the $(K,d)$-tube and the $\Z^d$ lattice, for some $d\ge2$. Let $\mu_K$ denote the time constant associated with the $(K,d)$-tube, and assume that $\tau_e$ has finite mean, or that $K\ge3$ and $Y_{2d}$ has finite mean, so that $\mu_K$ is well defined. Let $\{\tau_e\}_{e\in\Eb_{\Z^d}}$ be i.i.d.\ passage times associated to the $\Z^d$ lattice, and denote by $T_K(u,v)$ the passage time with respect to $\{\tau_e\}_{e\in\Eb_{\Z^d}}$, between $u$ and $v$ when only paths visiting vertices in $\Z\times\{0,\ldots,K-1\}^{d-1}$ are allowed. This produces a simultaneous coupling of the passage time on $(K,d)$-tubes for all $K\geq1$. Trivially, $T_{K+1}(u,v)\leq T_K(u,v)$ for any $u$ and $v$ in $\Z\times\{0,\ldots,K-1\}^{d-1}$.

\begin{prop}\label{prop:muKdecreasing}
For all $K\geq1$, $\quad\mu_{K+1}<\mu_K$.
\end{prop}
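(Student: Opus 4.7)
The weak inequality $\mu_{K+1}\leq\mu_K$ is immediate from the coupling $T_{K+1}\leq T_K$ recorded just above. To upgrade it to a strict inequality it suffices, by Theorem~\ref{int:LLN} applied to each tube, to exhibit a constant $c>0$ such that
$$
\E\big[T_K(\mathbf{0},n\mathbf{e}_1)-T_{K+1}(\mathbf{0},n\mathbf{e}_1)\big]\;\geq\;cn
$$
for all sufficiently large $n$. The strategy is to identify a local event of positive probability on which traversing a window of bounded length through the added slice saves a deterministic amount of time, and then to turn this into a linear-in-$n$ gain by a straightforward ergodic argument.

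Since $\Pt$ is not concentrated at a single point, fix $\mt<t_0<t_1<\Mt$ and set $p_0:=\Pt([0,t_0])>0$, $p_1:=\Pt([t_1,\infty))>0$. Choose an integer $L\geq 2$ (to be fixed below), and for $i\geq0$ let $B_i$ denote the subgraph of the $(K+1)$-tube spanned by the levels $2iL,2iL+1,\ldots,2iL+L$. Inside $B_i$ pre-specify a tree $\Upsilon_i$ with at most $L+C$ edges (where $C=C(K,d)$) that connects every boundary vertex of $B_i$ (at level $2iL$ or $2iL+L$) through short branches to a length-$L$ straight segment running along the added slice (the vertices with at least one coordinate equal to $K$) from level $2iL$ to level $2iL+L$. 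The improvement event is
$$
F_i\;:=\;\{\tau_e\leq t_0\text{ for all }e\in\Upsilon_i\}\cap\{\tau_e\geq t_1\text{ for every }K\text{-tube edge }e\text{ in }B_i\}.
$$
Each $F_i$ has the same positive probability $p$, and because distinct $B_i$'s involve disjoint edge sets, $\{F_i\}_{i\geq 0}$ is a mutually independent family.

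Fix $L$ large enough that $\delta:=L(t_1-t_0)-C\,t_0>0$. When $F_i$ occurs, any $K$-path from a vertex at level $2iL$ to a vertex at level $2iL+L$ must include at least one inter-level edge at each of the $L$ floors strictly inside $B_i$, each with passage time $\geq t_1$; hence its passage time is at least $Lt_1$. Meanwhile $\Upsilon_i$ contains a $(K+1)$-path between any such pair of passage time at most $(L+C)t_0$. Let $\gamma_K^{\ast}$ be the $K$-geodesic from $\mathbf{0}$ to $n\mathbf{e}_1$. For each $i<\lfloor n/(2L)\rfloor$ with $F_i$ occurring, replace the segment of $\gamma_K^{\ast}$ between its first visit to level $2iL$ and its last visit to level $2iL+L$ by the corresponding $\Upsilon_i$-path joining these two boundary vertices. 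Since after the last visit to level $2iL+L$ the geodesic remains at levels strictly above $B_i$ (otherwise that level would be revisited, contradicting simplicity of $\gamma_K^{\ast}$), the replaced segments for different good blocks are time-disjoint; hence the modifications may be performed simultaneously and yield a valid $(K+1)$-path from $\mathbf{0}$ to $n\mathbf{e}_1$ of passage time at most $T_K(\mathbf{0},n\mathbf{e}_1)-\delta N_n$, where $N_n:=\#\{i<\lfloor n/(2L)\rfloor:F_i\text{ occurs}\}$. Taking expectations and using that $N_n$ is a sum of $\lfloor n/(2L)\rfloor$ i.i.d.\ $\mathrm{Bernoulli}(p)$ variables yields
$$
\E[T_K(\mathbf{0},n\mathbf{e}_1)]-\E[T_{K+1}(\mathbf{0},n\mathbf{e}_1)]\;\geq\;\delta\,\E[N_n]\;=\;\frac{\delta p}{2L}\,n+O(1),
$$
and so $\mu_K-\mu_{K+1}\geq\delta p/(2L)>0$ follows on dividing by $n$.

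The most delicate point is justifying the path modification above: the $K$-geodesic may enter and leave a given $B_i$ several times, and one must verify that the surgery between its first entry to level $2iL$ and its last exit from level $2iL+L$ both leaves a valid simple path and produces a saving of at least $\delta$ on $F_i$. The floor-crossing argument provides the deterministic lower bound $Lt_1$ on the removed passage time irrespective of such meanderings, while simplicity of $\gamma_K^{\ast}$ together with the edge-disjointness of the blocks ensures that the replacements in different good $B_i$'s do not conflict and that the individual savings accumulate.
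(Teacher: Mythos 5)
Your overall strategy is the same as the paper's: build a local event of positive probability on which any path confined to the $K$-tube must pay a fixed extra cost $\delta>0$ compared to a path using the added slice, observe that these events are i.i.d.\ over disjoint blocks, and accumulate a linear gain. The paper implements this by taking the regenerative event $A_n^{K+1}$ of \eqref{eq:Ank} with $\gamma_n$ the straight segment along the new slice, so that Lemma~\ref{lma:Ank} does all the work: every near-optimal $(K+1)$-path is forced through $\hat v_{n+M}$ (which lies outside the $K$-tube), the passage time decomposes exactly at these vertices, and the proof of Lemma~\ref{lma:Ank} already supplies the deterministic gap $(\thi-\tlow)M-\tlow L$. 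Your version rebuilds this machinery by hand, and two steps do not survive scrutiny as written.

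First, for $K\geq2$ your event $F_i$ is empty. The tree $\Upsilon_i$ must connect boundary vertices such as $(2iL,0,\dots,0)$ to the added slice, and every edge incident to a cross-section point with all coordinates $\leq K-2$ has both endpoints in $\{0,\dots,K-1\}^{d-1}$, i.e.\ is a $K$-tube edge. So $\Upsilon_i$ necessarily contains $K$-tube edges in $B_i$, which $F_i$ simultaneously requires to satisfy $\tau_e\leq t_0$ and $\tau_e\geq t_1>t_0$. The repair is the one built into \eqref{eq:Ank}: demand $\tau_e\geq t_1$ only on $B_i\setminus\Upsilon_i$ (or only on the inter-level $K$-tube edges, which are the only ones the floor-crossing bound uses and which never belong to $\Upsilon_i$). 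Second, the surgery is not justified: you cut the $K$-geodesic at its \emph{first} visit to level $2iL$ and its \emph{last} visit to level $2iL+L$, and the asserted disjointness of the excised segments does not follow from the fact that the path stays above $B_i$ after its last visit to level $2iL+L$ --- the geodesic may first reach level $2(i+1)L$, return to level $2iL+L$, and only then proceed, in which case the segment for block $i$ overlaps the one for block $i+1$. Cutting at \emph{first} visits to both boundary levels fixes this: first-visit times to increasing levels are ordered along the path, the excised portions are then disjoint sub-arcs whose concatenated complement splices correctly with the $\Upsilon_i$-paths, and each excised portion still crosses all $L$ floors, so the lower bound $Lt_1$ and hence the saving $\delta$ per good block survive. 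With these two corrections your argument is sound and is, in substance, the argument of the paper.
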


\begin{proof}
Let $A_n^K$ be the event defined in (\ref{eq:Ank}) with respect to the $(K,d)$-tube, for $\gamma_n$ chosen to be the straight line segment between the points $(n,K,0,\ldots,0)$ and $(n+2M,K,0,\ldots,0)$. It follows from Lemma \ref{lma:Ank} that if $A_n^{K+1}$ occurs, then
$$
\delta:=T_K\big(n\ebf_1,(n+2M)\ebf_1\big)-T_{K+1}\big(n\ebf_1,(n+2M)\ebf_1\big)\,>\,0.
$$
Thus, if $m_k=(2M+1)k$, then
$$
T_{K+1}(\mathbf{0},m_k\ebf_1)+\delta\sum_{j=0}^{k-1}\ind_{A_{m_j}^{K+1}}\;\leq\; T_K(\mathbf{0},m_k\ebf_1)
$$
for all $k\geq0$. The claimed statement now follows dividing by $m_k$, and sending $k$ to infinity.
\end{proof}

We modify the above coupling slightly, and let $\tilde{T}_K(u,v)$ denote the passage time with respect to $\{\tau_e\}_{e\in\Eb_{\Z^d}}$, between $u$ and $v$, over paths restricted to the $\Z\times\{-K,\ldots,K\}^{d-1}$ nearest neighbour graph. This produces a simultaneous coupling of the passage time on $(2K+1,d)$-tubes.

\begin{prop}\label{prop:muKlimit}
$\quad\displaystyle{\lim_{K\raw\infty}\mu_K=\mu(\ebf_1)}$.
\end{prop}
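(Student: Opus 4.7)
The lower bound $\lim_K \mu_K \geq \mu(\mathbf{e}_1)$ is immediate: by the coupling, any path in the $(2K+1,d)$-tube is also a path in $\Z^d$, so $\tilde T_K(\mathbf{0}, n\mathbf{e}_1) \geq T(\mathbf{0}, n\mathbf{e}_1)$ pointwise. Dividing by $n$ and letting $n \to \infty$ yields $\mu_{2K+1} \geq \mu(\mathbf{e}_1)$ for every $K$. Combined with Proposition~\ref{prop:muKdecreasing}, $\{\mu_K\}_K$ is non-increasing and bounded below by $\mu(\mathbf{e}_1)$, so $\lim_K \mu_K$ exists and is at least $\mu(\mathbf{e}_1)$.

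For the matching upper bound I plan to use a one-scale truncation. Subadditivity of $n \mapsto \E[\tilde T_K(\mathbf{0}, n\mathbf{e}_1)]$ (inherited from $T(\mathbf{0}, (n+m)\mathbf{e}_1) \leq T(\mathbf{0}, n\mathbf{e}_1) + T(n\mathbf{e}_1, (n+m)\mathbf{e}_1)$) together with the $L^1$ convergence of Theorem~\ref{int:LLN} and Fekete's lemma gives
\[
\mu_{2K+1} \;=\; \inf_{N \geq 1} \frac{\E[\tilde T_K(\mathbf{0}, N\mathbf{e}_1)]}{N}.
\]
Fix $\epsilon>0$ and, using that $\E[T(\mathbf{0}, N\mathbf{e}_1)]/N \to \mu(\mathbf{e}_1)$ on the full lattice, choose $N$ so that $\E[T(\mathbf{0}, N\mathbf{e}_1)]/N \leq \mu(\mathbf{e}_1)+\epsilon$. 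It therefore suffices to show that, at this fixed $N$,
\[
\lim_{K \to \infty} \E[\tilde T_K(\mathbf{0}, N\mathbf{e}_1)] \;=\; \E[T(\mathbf{0}, N\mathbf{e}_1)],
\]
since then $\mu_{2K+1} \leq \mu(\mathbf{e}_1)+2\epsilon$ for all sufficiently large $K$, and $\epsilon\to0$ finishes.

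The convergence at fixed scale $N$ I expect to obtain by dominated convergence, after showing $\tilde T_K(\mathbf{0}, N\mathbf{e}_1) \downarrow T(\mathbf{0}, N\mathbf{e}_1)$ almost surely. Monotonicity in $K$ is built into the coupling. For the limit, given $\eta>0$ pick a finite $\Z^d$-path $\gamma$ from $\mathbf{0}$ to $N\mathbf{e}_1$ realising the unrestricted passage time to within $\eta$; since $\gamma$ uses only finitely many edges, it is contained in $[0,N]\times\{-K,\ldots,K\}^{d-1}$ for all sufficiently large $K$, whence $\tilde T_K(\mathbf{0}, N\mathbf{e}_1) \leq T(\gamma) \leq T(\mathbf{0}, N\mathbf{e}_1)+\eta$, and $\eta\to0$ gives the a.s. limit. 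The domination $\tilde T_K \leq \tilde T_1$, together with the finiteness of $\E[\tilde T_1(\mathbf{0}, N\mathbf{e}_1)]$ under the standing moment assumption, transfers the convergence to means. The only step that needs a little care is the identification $\mu_{2K+1} = \inf_N \E[\tilde T_K(\mathbf{0}, N\mathbf{e}_1)]/N$ above; everything else is routine bookkeeping.
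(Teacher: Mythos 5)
Your argument is correct and is essentially the paper's own proof: both rest on the a.s.\ monotone convergence $\tilde T_K(\mathbf{0},N\mathbf{e}_1)\downarrow T(\mathbf{0},N\mathbf{e}_1)$ at fixed $N$ (transferred to expectations by monotone/dominated convergence) together with the subadditive characterisation $\mu_{2K+1}=\inf_{N\geq1}\frac{1}{N}\E[\tilde T_K(\mathbf{0},N\mathbf{e}_1)]$. The paper merely packages your $\epsilon$-argument as an interchange of the two infima, $\inf_K\inf_N=\inf_N\inf_K$.
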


\begin{proof}
Clearly $\tilde{T}_K(\mathbf{0},\mathbf{n})\geq\tilde{T}_{K+1}(\mathbf{0},\mathbf{n})$, and $T(\mathbf{0},\mathbf{n})=\lim_{K\raw\infty}\tilde{T}_K(\mathbf{0},\mathbf{n})$.
An application of the Monotone Convergence Theorem shows that
$$
\E\big[T(\mathbf{0},\mathbf{n})\big]\;=\;\lim_{K\raw\infty}\E\big[\tilde{T}_K(\mathbf{0},\mathbf{n})\big]\;=\;\inf_{K\geq0}\E\big[\tilde{T}_K(\mathbf{0},\mathbf{n})\big].
$$
Since the limit $\lim_{n\raw\infty}\frac{1}{n}a_n$ exists and equals $\inf_{n\geq1}\frac{1}{n}a_n$ for any subadditive real-valued sequence $\{a_n\}_{n\geq1}$, we have for any $K\geq1$ (including $K=\infty$, corresponds to the $\Z^d$ lattice) that
$$
\mu_{2K+1}\;=\;\lim_{n\raw\infty}\frac{1}{n}\E\big[\tilde{T}_K(\mathbf{0},\mathbf{n})\big]\;=\;\inf_{n\geq1}\frac{1}{n}\E\big[\tilde{T}_K(\mathbf{0},\mathbf{n})\big].
$$
Thus, since $\mu_K$ is non-increasing in $K$
\bea
\lim_{K\raw\infty}\mu_{2K+1}\;=\;\inf_{K\geq0}\inf_{n\geq1}\frac{1}{n}\E\big[\tilde{T}_K(\mathbf{0},\mathbf{n})\big]\;=\;\inf_{n\geq1}\inf_{K\geq0}\frac{1}{n}\E\big[\tilde{T}_K(\mathbf{0},\mathbf{n})\big]
\;=\;\mu(\ebf_1).\qedhere
\eea
\end{proof}

\subsection{Relating time constants to geodesics}

We return to consider arbitrary essentially 1-dimensional periodic graphs. We first deduce existence of minimising paths from Lemma \ref{lma:Ank}.

\begin{prop}\label{prop:geodesic}
Let $U$ and $V$ be two finite sets of vertices of an essentially 1-dimensional periodic graph. There is an almost surely finite path $\gamma$ from $U$ to $V$, such that
$$
T(\gamma)=T(U,V).
$$
Moreover, if the distribution $\Pt$ does not have any point masses, then $\gamma$ is almost surely unique.
\end{prop}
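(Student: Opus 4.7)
The plan is to use Lemma~\ref{lma:Ank} to confine the infimum $T(U,V)$ to a finite subgraph, where the minimum over a finite collection of simple paths is trivially attained. Uniqueness under the atomless hypothesis will then follow from a standard argument that distinct simple paths almost surely have distinct passage times.

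For existence, I would first exploit the regenerative structure. The events $\{A_{n_k}\}_{k\in\Z}$ indexed along $n_k=k(2M+1)$ form an i.i.d.\ family with positive success probability, so almost surely infinitely many occur as $k\to+\infty$ and as $k\to-\infty$. Let $L$ be an integer bounding $|\mathrm{level}(w)|$ for every $w\in U\cup V$, and pick (random) $n^+>L$ and $n^-<-L-2M$ with $A_{n^+}$ and $A_{n^-}$ both occurring. Let $\G'$ denote the finite subgraph spanned by the vertices at levels in $[n^-+M,\,n^++2M-1]$; by construction $U\cup V\cup\{\hat v_{n^-+M},\hat v_{n^++M}\}\subset\Vb_{\G'}$, and $\G'$ carries only finitely many edges, hence only finitely many simple paths from $U$ to $V$. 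I claim that
\begin{equation*}
T(U,V)\ =\ T_{\G'}(U,V)\ :=\ \min\bigl\{T(\gamma):\gamma\text{ a simple path from }U\text{ to }V\text{ in }\G'\bigr\},
\end{equation*}
the right-hand side being attained as a minimum over a finite set, thereby producing the (almost surely finite) geodesic.

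The inequality $T(U,V)\le T_{\G'}(U,V)$ is trivial. For the reverse, any simple path $\Gamma$ from $u\in U$ to $v\in V$ that exits $\G'$ on the right must visit some $w$ at level $\ge n^++2M$; splitting $\Gamma$ at $w$ and applying Lemma~\ref{lma:Ank} to both pieces (since $u,v$ have level $\le n^+\le L<n^++2M$) gives
\begin{equation*}
T(\Gamma)\ \ge\ T(u,w)+T(w,v)\ \ge\ T(u,\hat v_{n^++M})+T(\hat v_{n^++M},v)+2\,T(\hat v_{n^++M},w),
\end{equation*}
and the symmetric argument with $A_{n^-}$ handles excursions below level $n^-$. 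By Lemma~\ref{lma:Ank}\emph{(b)}, given $A_{n^+}$ the value of $T(u,\hat v_{n^++M})$ depends only on passage times at levels $\le n^++2M$ (equivalently, the separating set $\Vc_{n^+}$ from the proof of that lemma already restricts the relevant infimum to the ``lower side'' of the cut). Combined with the analogous statement on the left, this allows the two sub-infima to be realised by walks in $\G'$, so that the right-hand side above is itself $\ge T_{\G'}(U,V)$. The main technical obstacle is precisely this last reduction of the sub-infima to $\G'$; it can be handled either directly through the cut structure in the proof of Lemma~\ref{lma:Ank}, or by iterating the same trick with further regenerative events $A_{n^{++}},A_{n^{--}},\ldots$ (which almost surely exist), the iteration terminating because any candidate simple path uses only finitely many edges.

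For uniqueness, assume $\Pt$ is atomless and let $\gamma_1\ne\gamma_2$ be two distinct simple paths in $\G$. Then $\gamma_1\triangle\gamma_2\ne\emptyset$, so
\begin{equation*}
T(\gamma_1)-T(\gamma_2)\ =\ \sum_{e\in\gamma_1\setminus\gamma_2}\tau_e\ -\ \sum_{e\in\gamma_2\setminus\gamma_1}\tau_e
\end{equation*}
is a nondegenerate signed sum of independent continuously distributed random variables and therefore has a continuous law, giving $P(T(\gamma_1)=T(\gamma_2))=0$. A union bound over the countably many pairs of simple paths in the locally finite graph $\G$ shows that, almost surely, all simple paths carry pairwise distinct passage times. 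Together with the existence part, this forces the geodesic from $U$ to $V$ to be almost surely unique.
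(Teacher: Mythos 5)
Your proof is correct and follows essentially the same route as the paper's: use the almost-sure occurrence of the regenerative events $A_n$ on both sides of $U\cup V$ together with Lemma~\ref{lma:Ank} to confine the infimum to a finite subgraph, where it is attained as a minimum over finitely many simple paths, and then get uniqueness from the atomless distribution. The one technical point you flag --- reducing the sub-infima $T(u,\hat v_{n^++M})$ and $T(\hat v_{n^++M},v)$ to paths inside $\G'$ via the cut $\Vc_{n}$ --- is handled at the same (terse) level of detail in the paper itself, so there is no gap relative to the original argument.
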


\begin{proof}
We may assume that $U\cup V\subseteq\bigcup_{k=0}^m\Vb_{\G_k}$. Assume further that $\tlow$, $\thi$ and $M$ are chosen in accordance with Lemma \ref{lma:Ank}. With probability one the event $A_{m+n}\cap A_{-2M-n}$ will occur for infinitely many $n\geq0$. Let $l$ be the least such $n$. It follows from Lemma \ref{lma:Ank} that for any path $\Gamma$ between $u$ and $v$ that reach beyond level $m+l+2M$ in the positive direction, or level $-2M-l$ in the negative direction, there is another path $\Gamma^\prime$ that only visits vertices in $\bigcup_{k=-2M-l}^{m+l+2M}\Vb_{\G_k}$, and that satisfies $T(\Gamma)\geq T(\Gamma^\prime)$. Thus, since there are only finitely many edges between level $-2M-l$ and $m+l+2M$, it follows that $T(U,V)$ is the minimum of the passage times over an almost surely finite number of paths. This proves the first statement. The second statement also follows from this, together with the fact that the probability of two paths having the same passage time is zero, when the passage-time 
distribution is free of point masses.
\end{proof}

As in the introduction, we will use the term \emph{geodesic} to refer to a path attaining the minimal passage time between two vertices, or two finite sets of vertices. Since geodesics are not necessarily unique, we assume a fixed deterministic rule to choose one when several are possible; Denote by $\gamma(u,v)$ this geodesic between $u$ and $v$, and let $N(u,v)$ denote its length. In particular, Theorem~\ref{int:geodesic} gives the existence of
\be\label{def:lengthconstant}
\alpha:=\lim_{n\to\infty}\frac{\E\big[N(I,v_{n,i})\big]}{n},
\quad\text{and}\quad\sigma_N^2:=\lim_{n\to\infty}\frac{\Var\big(N(I,v_{n,i})\big)}{n}.
\ee

Geodesics are, as seen via Lemma \ref{lma:Ank}, locally determined. Thus, it makes sense to talk about an infinite geodesic from $-\infty$ to $\infty$. Let to this end $\gamma^\ast$ denote the unique infinite path that between level $\rho_{k-1}$ and $\rho_k$ coincides with $\gamma(\hat{v}_{\rho_{k-1}},\hat{v}_{\rho_k})$, for each $k\in\Z$ (when $\rho_k$ is suitably defined for $k<0$). The resulting infinite path is indeed a geodesic, i.e., any finite portion $\tilde{\gamma}^\ast$ of $\gamma^\ast$ with endpoints $u$ and $v$ satisfies $T(\tilde{\gamma}^\ast)=T(u,v)$. It is possible to characterize time and length constants in terms of this infinite geodesic.

\begin{prop}\label{prop:alphagamma}
\bea
\begin{aligned}
\alpha\;&=\;\sum_{v\in\Vb_{\G_0}}P(v\in\gamma^\ast)\;=\;\sum_{e\in\Eb_{\G_0}^\ast}P(e\in\gamma^\ast),\\
\mu\;&=\;\sum_{e\in\Eb_{\G_0}^\ast}\E[\tau_e\cdot\ind_{\{e\in\gamma^\ast\}}]\;=\;\sum_{e\in\Eb_{\G_0}^\ast}\E[\tau_e|e\in\gamma^\ast]P(e\in\gamma^\ast).
\end{aligned}
\eea
\end{prop}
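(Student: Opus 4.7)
The plan is to exploit translation invariance of the law of $\gamma^\ast$ together with a Law of Large Numbers computation, so as to identify $\alpha$ and $\mu$ with the per-level averages of contributions from $\gamma^\ast$. Since the passage-time field $\{\tau_e\}_{e\in\Eb}$ is i.i.d.\ and the graph is periodic, its joint distribution is invariant under shifting all levels by one. The construction of $\gamma^\ast$ as the infinite geodesic (glued from unique finite geodesics by Lemma~\ref{lma:Ank}) is equivariant under this shift, provided the tie-breaking rule is chosen translation-equivariantly. Consequently, $P(v_{m,i}\in\gamma^\ast)$, $P(e\in\gamma^\ast)$, and $\E[\tau_e\ind_{\{e\in\gamma^\ast\}}]$ depend only on the `phase' of $v$ or $e$ inside one period, so that the sums
\bea
c_V:=\sum_{v\in\Vb_{\G_0}}P(v\in\gamma^\ast),\quad c_E:=\sum_{e\in\Eb_{\G_0}^\ast}P(e\in\gamma^\ast),\quad c_T:=\sum_{e\in\Eb_{\G_0}^\ast}\E[\tau_e\ind_{\{e\in\gamma^\ast\}}]
\eea
equal the expected number of edges (respectively vertices, accumulated passage time) contributed by $\gamma^\ast$ at every individual level.

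I would then sum over $m=0,\ldots,n-1$ to obtain $n c_E=\E[N_n^\ast]$ and $n c_T=\E[T_n^\ast]$, where $N_n^\ast$ and $T_n^\ast$ denote respectively the length and total weight of $\gamma^\ast$ restricted to the slab $\bigcup_{m=0}^{n-1}\Eb_{\G_m}^\ast$. By Lemma~\ref{lma:Ank}, the piece of $\gamma^\ast$ between two consecutive regeneration points is a geodesic, and hence the restriction of $\gamma^\ast$ to this slab coincides with $\gamma(\hat{v}_{\rho_0},\hat{v}_{\rho_{\nu(n)}})$ up to two boundary excursions, one near level $0$ and one near level $n$. Each such excursion is contained in a single regenerative block, whose expected length and weight are finite by Proposition~\ref{prop:pmoments}. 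Hence $\E[N_n^\ast]$ and $\E[T_n^\ast]$ differ from $\E[N(\hat{v}_{\rho_0},\hat{v}_{\rho_{\nu(n)}})]$ and $\E[T(\hat{v}_{\rho_0},\hat{v}_{\rho_{\nu(n)}})]$ only by bounded quantities. Dividing by $n$ and invoking the Law of Large Numbers together with the uniform integrability already verified in the proof of Theorem~\ref{int:LLN} (and its analogue contained in Theorem~\ref{int:geodesic}) identifies $c_E=\alpha$ and $c_T=\mu$.

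The vertex identity $c_V=\alpha$ finally follows from the fact that a finite self-avoiding subpath satisfies $\#\text{vertices}=\#\text{edges}+1$, so that summing the vertex contributions over $m=0,\ldots,n$ yields $(n+1)c_V=nc_E+O(1)$. I expect the bookkeeping of the two boundary excursions to be the main obstacle: one must confirm that what lies `outside' the regeneration points $\hat{v}_{\rho_0}$ and $\hat{v}_{\rho_{\nu(n)}}$ is contained in at most one regenerative block on each side and has bounded expected length and passage time. These estimates are of the same flavour as those in~\eqref{eq:EtSkbound} and~\eqref{eq:TvTvnbound}, and rest ultimately on part \emph{a)} of Proposition~\ref{prop:pmoments} (together with $\E[\tau_e]<\infty$ for the weight bound); beyond this, translation invariance does all of the work.
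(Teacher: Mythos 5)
Your proposal is correct and follows essentially the same route as the paper's proof: translation invariance turns the per-level sums into expected counts of $\gamma^\ast$ over a window of $n$ levels, these are compared with the corresponding quantities for a finite geodesic between (near-)regeneration points, and the boundary discrepancies are controlled by the finite expected size of a regenerative block exactly as in \eqref{eq:EtSkbound}. The paper works with the symmetric window $[-n,n]$, truncates at $\pm(n-\sqrt{n})$ and controls the error via an explicit regeneration event $D_n$, but this is only a cosmetic difference from your use of $\rho_0$ and $\rho_{\nu(n)}$.
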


\begin{proof}
We will deduce the characterisation of $\alpha$ in terms of vertices, and leave the remaining cases, which are deduced similarly, to the reader. Observe that
\bea
N(u,w)=\sum_{k\in\Z}\sum_{v\in\Vb_{\G_k}}\ind_{\{v\in\gamma(u,w)\}}-1,\quad\text{and}\quad\lim_{n\to\infty}\frac{\E[N(\hat{v}_{-n},\hat{v}_n)]}{2n}=\alpha.
\eea
Define $N^\ast:=\sum_{k=-n+\sqrt{n}}^{n-\sqrt{n}}\sum_{v\in\Vb_{\G_k}}\ind_{\{v\in\gamma^\ast\}}$. Clearly
$$
\frac{\E[N^\ast]}{2n}\;=\;\frac{2(n-\sqrt{n})}{2n}\sum_{v\in\Vb_{\G_0}}P(v\in\gamma^\ast)\;\raw\;\sum_{v\in\Vb_{\G_0}}P(v\in\gamma^\ast),\quad\text{as }n\raw\infty,
$$
so we are finished if we show that $\E\big[|N(\hat{v}_{-n},\hat{v}_n)-N^\ast|\big]/n\to0$, as $n\to\infty$. Let
$$
D_n:=\big\{A_k\cap A_{-k-2M}\text{ occurs for some }k\in[n-\sqrt{n},n-2M]\big\},
$$
where $A_k$ and $M$ are as defined in Section \ref{sec:patch}. Let $\kappa_n:=\min\{k\geq n:A_k\cap A_{-k-2M}\text{ occurs}\}$. Trivially, $|N(\hat{v}_{-n},\hat{v}_n)-N^\ast|\leq4|\Vb_{\G_0}|(\kappa_n+2M)$. On the event $D_n$ we have
\bea
\begin{split}
N(\hat{v}_{-n},\hat{v}_n)-N^\ast\;&=\;\sum_{\substack{k>n-\sqrt{n}\\ k<-n+\sqrt{n}}}\sum_{v\in\Vb_{\G_k}}\ind_{\{v\in\gamma(u,w)\}}-1\\
&\leq\; 2|\Vb_{\G_0}|(\kappa_n+2M-n+\sqrt{n}).
\end{split}
\eea
Since $\E[\kappa_n-n]$ is bounded, we easily realise that
\bea
\begin{split}
\E\big[|N(\hat{v}_{-n},\hat{v}_n)-N^\ast|\big]\;&=\;\E\big[|N(\hat{v}_{-n},\hat{v}_n)-N^\ast|(\ind_{D_n}+\ind_{D_n^c})\big]\\
&\leq\;2|\Vb_{\G_0}|\big(\E[\kappa_n-n]+2M+\sqrt{n}\big)+4|\Vb_{\G_0}|\big(\E[\kappa_n]+2M\big)P(D_n^c)\\
&\leq\;4|\Vb_{\G_0}|\big(C+\sqrt{n}+nP(D_n^c)\big)\;=\;o(n).
\end{split}
\eea
As mentioned, the remaining characterisations are deduced similarly.
\end{proof}

\cite{benkalsch03} posed the question whether for first-passage percolation on the $\Z^d$ lattice, $P\big(\mathbf{0}\in\gamma(-\mathbf{n},\mathbf{n})\big)\raw0$ as $n\raw\infty$? One may pose a corresponding question for first-passage percolation on essentially 1-dimensional graphs such as the $(K,d)$-tube: How does $P(v\in\gamma^\ast)$ behave as $K\to\infty$?

Let the \emph{$(K,d)$-cylinder} be the graph obtained from the $(K,d)$-tube by connecting opposing vertices on the boundary. Alternatively, the graph can be described as the $\Z\times\Z_K^{d-1}$ nearest neighbour graph, where $\Z_K^{d-1}$ denotes the $(d-1)$-dimensional torus of width $K$. All vertices in this graph are equivalent, so Proposition~\ref{prop:alphagamma} says that every vertex in the $(K,d)$-cylinder satisfies
$$
P(v\in\gamma^\ast)\,=\,\frac{\alpha}{K^{d-1}},
$$
for some constant $\alpha=\alpha(K,d)$. An argument due to Kesten shows that there is a finite constant $C=C(d)$ such that $\alpha(K)\leq C$ for all $K\geq1$ (cf.\ \citet[page 146]{howard04}). Since also $\alpha\geq1$ by triviality, we conclude that for $(K,d)$-cylinders
$$
\frac{1}{K^{d-1}}\;\leq\; P(v\in\gamma^\ast)\;\leq\;\frac{C}{K^{d-1}},\quad\text{uniformly in $v$ and $K$}.
$$
We cannot rely on symmetry to deduce similar asymptotics for $(K,d)$-tubes (although Kesten's argument remains valid). Indeed $(K,d)$-tubes would be the more interesting case, and it even remains an open question whether $\max_{v\in\Vb}P(v\in\gamma^\ast)\raw0$ as $K\raw\infty$ in this case.

\subsection{Continuity of constants}

The following result is inspired by a similar result due to \cite{cox80} and \cite{coxkes81}, who in their case consider first-passage percolation on the $\Z^d$ lattice. Their proof of the lattice case is rather lengthy. Due to the regenerative behaviour in the case of essentially 1-dimensional periodic graphs, this case turns out to be much simpler.

\begin{prop}\label{prop:contofconst}
Let $F_m$ for $m=1,2,\ldots,\infty$ be distribution functions such that $F_m\stackrel{d}{\raw}F_\infty$ as $m\raw\infty$. Then, as $m\raw\infty$,
$$
\alpha(F_m)\raw\alpha(F_\infty)\quad\text{and}\quad\sigma_N(F_m)\raw\sigma_N(F_\infty).
$$
Assume further that there is a distribution function $V$ such that $F_m\geq V$ for all $m\geq1$. If $\mu(V)<\infty$ and $\sigma(V)<\infty$ respectively, then as $m\raw\infty$
$$
\mu(F_m)\raw\mu(F_\infty)\quad\text{and}\quad\sigma(F_m)\raw\sigma(F_\infty).
$$
\end{prop}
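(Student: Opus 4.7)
The plan is to express each of $\mu,\sigma,\alpha,\sigma_N$ as a functional of the joint law of the regenerative increments $(S_k,\tSk,N_k)$, where $N_k:=N(\hat v_{\rho_{k-1}},\hat v_{\rho_k})$ is the length analogue of $\tSk$, via~(\ref{def:timeconstant}) together with its obvious length-version, and then to prove continuity of these functionals as $F_m\to F_\infty$. Since $F_\infty$ is non-degenerate, I would first select continuity points $\tlow<\thi$ of $F_\infty$ with $F_\infty(\tlow)>0$ and $F_\infty(\thi)<1$. Weak convergence at continuity points gives $F_m(\tlow)\to F_\infty(\tlow)$ and $1-F_m(\thi)\to 1-F_\infty(\thi)$, both bounded away from $0$ for all large $m$; hence $\mt(F_m)\le\tlow<\thi\le\Mt(F_m)$ eventually, a single integer $M$ from Lemma~\ref{lma:Ank} serves for all such $m$, and the regenerative success probability satisfies $p_A^{(m)}\to p_A^{(\infty)}>0$.

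Next, via the Skorohod representation I would realize the processes on a common probability space so that $\tau_e^{(m)}\to\tau_e^{(\infty)}$ almost surely for each edge, with independence across edges. Because $\tlow,\thi$ are continuity points of $F_\infty$, $\ind_{A_n^{(m)}}\to\ind_{A_n^{(\infty)}}$ a.s.\ for every $n$. Since each $\rho_k^{(\infty)}$ is a.s.\ finite and determined by only finitely many such indicators, $\rho_k^{(m)}\to\rho_k^{(\infty)}$ a.s., and in fact $\rho_k^{(m)}=\rho_k^{(\infty)}$ eventually; together with edgewise convergence this gives
\bea
\big(S_k^{(m)},\tSk^{(m)},N_k^{(m)}\big)\;\raw\;\big(S_k^{(\infty)},\tSk^{(\infty)},N_k^{(\infty)}\big),\quad\text{almost surely.}
\eea

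To upgrade a.s.\ convergence to convergence of means and variances, uniform integrability is required. For $\alpha,\sigma_N$ no extra hypothesis is needed: $N_k^{(m)}\le L\cdot S_k^{(m)}$ (at most $L$ slab-edges per level), while Proposition~\ref{prop:pmoments}\emph{a)} yields uniform exponential moments of $S_k^{(m)}$ because $p_A^{(m)}\ge p_A^{(\infty)}/2$ for $m$ large. This gives uniform boundedness of all relevant polynomial moments and hence $\alpha(F_m)\to\alpha(F_\infty)$, $\sigma_N(F_m)\to\sigma_N(F_\infty)$. For $\mu,\sigma$ I would enlarge the coupling with $\tau_e^{(V)}:=V^{-1}(U_e)$ so that $\tau_e^{(m)}\le\tau_e^{(V)}$ pointwise. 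Running the auxiliary $\sigma_e$-construction from the proof of Proposition~\ref{prop:pmoments}\emph{b)} under $F_m$, the intra-slab travel times $T_j^{(m)}$ satisfy
\bea
P\big(T_j^{(m)}>s\big)\;\le\;(\lambda C_1)^p\, P\big(Y_p^{(V)}>s/\lambda\big),
\eea
uniformly in $m$: uniformity of $C_1$ comes from $P_{F_m}(\tau_e\ge\thi)\to P_{F_\infty}(\tau_e\ge\thi)>0$, while the replacement of $Y_p^{(m)}$ by $Y_p^{(V)}$ is allowed by the stochastic domination $\tau_e^{(m)}\le\tau_e^{(V)}$. Under $\mu(V)<\infty$ (resp.\ $\sigma(V)<\infty$) this tail bound forces $\{T_j^{(m)}\}_m$ to be uniformly integrable in $L^1$ (resp.\ $L^2$), and the pointwise bound $\tSk^{(m)}\le\sum_{j}T_j^{(m)}$, combined with uniform exponential moments of $S_k^{(m)}$ and an elementary truncation on $\{S_k^{(m)}\le M_0\}$ versus $\{S_k^{(m)}>M_0\}$, transfers uniform integrability to the random sum and hence to $\tSk^{(m)}$ and $(\tSk^{(m)}-\mu^{(m)}S_k^{(m)})^2$, yielding $\mu(F_m)\to\mu(F_\infty)$ and $\sigma(F_m)\to\sigma(F_\infty)$. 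The main obstacle is running this uniform-in-$m$ version of the $\sigma$-device while the thresholds and regenerative levels themselves depend on $m$; the crucial point is that the auxiliary $\sigma_e^{(m)}$'s are constructed to be independent of the regeneration events, so after that decoupling everything reduces to the quantile-coupling tail comparison between $Y_p^{(m)}$ and $Y_p^{(V)}$.
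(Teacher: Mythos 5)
Your proposal is correct in outline and shares the paper's overall strategy: couple all the distributions on one probability space, express the constants through the regenerative increments via~\eqref{def:timeconstant} (and its length analogue), and pass to the limit using domination by $V$. The one genuine structural difference lies in how the $m$-dependence of the regeneration structure is handled, and it is precisely the point of the paper's proof. You let the events $A_n^{(m)}$ and levels $\rho_k^{(m)}$ depend on $m$ and argue that they stabilise almost surely (which works, given your choice of $\tlow,\thi$ as continuity points of $F_\infty$), but you then must pay for this with uniform-in-$m$ integrability estimates: a uniform lower bound on $p_A^{(m)}$, a uniform version of the $\sigma_e$-device from Proposition~\ref{prop:pmoments}, and a truncation argument to push uniform integrability through random sums over $m$-dependent blocks. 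The paper instead defines a single event $\tilde A_n:=\{U_e\leq a,\ \forall e\in\hat E_n\}\cap\{U_e\geq 1-a,\ \forall e\in E_n\setminus\hat E_n\}$ directly in terms of the uniforms, chosen so that $\tilde A_n\subseteq A_n(F_m)$ for every $m\geq L$ \emph{including} $m=\infty$. The resulting levels $\{\tilde\rho_k\}$ are then literally the same for all these distributions: the denominator $\E[\tilde\rho_1-\tilde\rho_0]$ is constant in $m$, the increment $T_{F_m}(\hat v_{\tilde\rho_0},\hat v_{\tilde\rho_1})$ converges almost surely because $F_m^{-1}(U)\to F_\infty^{-1}(U)$, and a single application of dominated convergence with the fixed dominating variable $T_V(\hat v_{\tilde\rho_0},\hat v_{\tilde\rho_1})$ finishes the proof. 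This is what makes the 1-dimensional case "much simpler" than the lattice result of Cox and Kesten, and it eliminates essentially all of the uniform moment bookkeeping your route requires. Two small points on your write-up: you should commit to the quantile coupling $\tau_e^{(m)}=F_m^{-1}(U_e)$ from the outset (a generic Skorohod representation does not give you the pointwise domination $\tau_e^{(m)}\leq\tau_e^{(V)}$ you later invoke), and the passage from $\mu(V)<\infty$ to $\E[Y_p(V)]<\infty$, which both you and the paper use implicitly, deserves a word (it follows since, by Menger's theorem, any path from $\hat v_0$ to $\hat v_1$ must cross a cut of $p$ edges, so $\tau_{S_k}(V)$ stochastically dominates $Y_p(V)$).
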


In order to compare different distributions, we will couple random variables via their inverse distribution functions $F^{-1}(u):=\inf\{x\in\R:F(x)\geq u\}$. (The same approach is used in \cite{cox80} and \cite{coxkes81}.) Indeed, if $U$ is uniformly distributed on $[0,1]$, then $F^{-1}(U)$ has distribution $F$.
Thus, when $F$ runs over the class of distribution functions, then $\{F^{-1}(U)\}_F$ generates a coupling of all differently distributed random variables.
It is not hard to prove that as $m\raw\infty$, $F_m\stackrel{d}{\raw}F_\infty$ implies $F_m^{-1}(U)\raw F_\infty^{-1}(U)$ almost surely (see e.g.\ \cite[Section 1.8.4]{thorisson00}).

Once we have the above coupling, the idea is the following. If $F_m\stackrel{d}{\raw}F_\infty$, $F_m\geq V$ for all $m$, and $V$ has finite mean, then $F_m^{-1}(U)\leq V^{-1}(U)$ and $\E\big[F_m^{-1}(U)\big]\raw\E\big[F_\infty^{-1}(U)\big]$ as $m\raw\infty$, by the Dominated Convergence Theorem. The regenerative structure allows a similar approach.

\begin{proof}[Proof of Proposition \ref{prop:contofconst}]
Let $\{U_e\}_{e\in\Eb}$ be a collection of independent random variables uniformly distributed on $[0,1]$. Thus, as $F$ ranges over the class of passage-time distributions, then $\big\{\{F^{-1}(U_e)\}_{e\in\Eb}\big\}_F$ simultaneously couples i.i.d.\ sets of passage times of the graph. Choose $a\in(0,1/2)$ such that $F_\infty^{-1}(1-a)>F_\infty^{-1}(a)$, and $F_\infty^{-1}$ is continuous in both $a$ and $1-a$. Take $\epsilon>0$ such that $F_\infty^{-1}(1-a)-F_\infty^{-1}(a)>2\epsilon$. Choose $L<\infty$ such that
$$
\big|F_m^{-1}(a)-F_\infty^{-1}(a)\big|\leq\epsilon\quad\text{and}\quad\big|F_m^{-1}(1-a)-F_\infty^{-1}(1-a)\big|\leq\epsilon,
$$
for all $m\geq L$. Recall the definition of $A_n=A_n(M,\tlow,\thi)$ in Section \ref{sec:patch}. Set $\tlow=F_\infty^{-1}(a)+\epsilon$ and $\thi=F_\infty^{-1}(1-a)-\epsilon$, and let $M$ be chosen in accordance with Lemma \ref{lma:Ank}. For the same $M$ (and with notation as in Section \ref{sec:patch}), define
$$
\tilde{A}_n=\tilde{A}_n(M):=\big\{U_e\leq a, \forall e\in\hat{E}_n\big\}\cap\big\{U_e\geq1-a, \forall e\in E_n\setminus\hat{E}_n\big\}.
$$
Since $a>0$, we have $P(\tilde{A}_n)>0$. For all $m\geq L$ we have
\bea
\left\{
\begin{aligned}
F_m^{-1}(u)&\leq\tlow,\quad\text{for }u\leq a,\\
F_m^{-1}(u)&\geq\thi,\quad\text{for }u\geq1-a.
\end{aligned}
\right.
\eea
With a slight abuse of notation, we let $A_n(F)$ denote the event $A_n$ with respect to $\big\{F^{-1}(U_e)\big\}_{e\in\Eb}$. In particular, this implies that $\tilde{A}_n\subseteq A_n(F_m)$ for all $L\leq m\geq\infty$. Define a sequence $\{\tilde{\rho}_k\}_{k\geq0}$ with respect to $\tilde{A}_n$ analogously as in Section \ref{sec:patch}. Note that for $m\geq L$, the sequence $\{\tilde{\rho}_k\}_{k\geq0}$ is a subsequence of $\{\rho_k\}_{k\geq0}$ defined with respect to $A_n(F_m)$. The advantage of this is that we get a regenerative sequence valid for all distributions $F_m$ with $L\leq m\leq\infty$.

From here the result follows quickly. Let $T_F(u,v)$ denote the passage time between $u$ and $v$ with respect to $\big\{F^{-1}(U_e)\big\}_{e\in\Eb}$. For $m\geq L$ we have the characterisation
\bea
\mu(F_m)=\frac{\E\big[T_{F_m}(\hat{v}_{\tilde{\rho}_0},\hat{v}_{\tilde{\rho}_1})\big]}{\E[\tilde{\rho}_1-\tilde{\rho}_0]}.
\eea
Thus, in order to prove that $\mu(F_m)\raw\mu(F_\infty)$ as $m\raw\infty$, it suffices to show that
\be\label{eq:ETFm}
\E\big[T_{F_m}(\hat{v}_{\tilde{\rho}_0},\hat{v}_{\tilde{\rho}_1})\big]\raw\E\big[T_{F_\infty}(\hat{v}_{\tilde{\rho}_0},\hat{v}_{\tilde{\rho}_1})\big],\quad\text{as }m\raw\infty.
\ee
But, as $m\to\infty$, $F_m^{-1}(U)\raw F_\infty^{-1}(U)$, and hence $T_{F_m}(\hat{v}_{\tilde{\rho}_0},\hat{v}_{\tilde{\rho}_1})\raw T_{F_\infty}(\hat{v}_{\tilde{\rho}_0},\hat{v}_{\tilde{\rho}_1})$, almost surely. Since $T_{F_m}(\hat{v}_{\tilde{\rho}_0},\hat{v}_{\tilde{\rho}_1})\leq T_V(\hat{v}_{\tilde{\rho}_0},\hat{v}_{\tilde{\rho}_1})$ and the latter has finite mean, we conclude by the Dominated Convergence Theorem that (\ref{eq:ETFm}) holds. The remaining conclusions are drawn similarly.
\end{proof}

\begin{remark}
The true condition for the convergence $\E[F_m^{-1}(U)]\raw\E[F_\infty^{-1}(U)]$ is uniform integrability of $\{F_m\}_{m\geq1}$. In the same way it is possible to relax the condition in the above proposition. Assume that there are $p$ (edge) disjoint paths from $\hat{v}_0$ and $\hat{v}_1$, and let $Y_p(F)$ denote the minimum of $p$ independent random variables distributed as $F$. The precise condition for convergence of $\mu(F_m)$ and $\sigma(F_m)$ is uniform integrability of $\{Y_p(F_m)^\alpha\}_{m\geq1}$, for $\alpha=1$ and 2 respectively.
\end{remark}

\section{Exact coupling and a 0--1 law}\label{sec:coupling}

A \emph{coupling}\nocite{lindvall02} of two random variables $X\sim P$ and $Y\sim P^\prime$ on a measurable space $(E,\mathcal{E})$, is a joint distribution $\hat{P}$ of $(X,Y)$, i.e., a measure on $(E^2,\mathcal{E}^2)$, such that its marginal distributions coincide with $P$ and $P^\prime$. When we couple two time-dependent random elements $\{X_t\}_{t\geq0}$ and $\{Y_t\}_{t\geq0}$, we say that the coupling is \emph{exact} if with probability one there exists a $\Tc<\infty$ such that $X_t=Y_t$, for all $t\geq\Tc$.

We will present an exact coupling of the sets of infected vertices $B_t$ and $B_t^\prime$ of two first-passage percolation processes with different initial configurations. Recall that we let $\Pt(\;\cdot\;)$ denote the distribution of $\tau_e$, and let $\mathcal{R}_+$ denote the Borel $\sigma$-algebra on $[0,\infty)$. Then $\{\tau_e\}_{e\in\Eb}$ and $\{\tau_e^\prime\}_{e\in\Eb}$ are random elements on the product space $\left([0,\infty)^{\Eb},\mathcal{R}_+^{\Eb}\right)$, each with distribution given by the product measure $\Pt^{\Eb}$. Let $\Eb_n$ denote the set of edges between level $-n$ and $n$, but not including edges between two vertices at level $-n$ and $n$. In the same manner $\Eb_n^c$ denotes the set of edges at and before level $-n$, as well as at level $n$ and beyond.

\begin{prop}[Coupling, continuous times]\label{prop:coupling:c}
Let $I$ and $I^\prime$ be finite subsets of the set of vertices $\Vb$ of an essentially 1-dimensional periodic graph $\G$. Assume that the passage time distribution $\Pt$ has an absolutely continuous component (with respect to Lebesgue measure). For any $m\geq0$, there exists a coupling of $\{\tau_e\}_{e\in\Eb_m^c}$ and $\{\tau_e^\prime\}_{e\in\Eb_m^c}$ such that if $\{\tau_e\}_{e\in\Eb_m}$ and $\{\tau_e^\prime\}_{e\in\Eb_m}$ each have distribution $\Pt^{\Eb_m}$, then the marginal distributions of $\{\tau_e\}_{e\in\Eb}$ and $\{\tau_e^\prime\}_{e\in\Eb}$ are given by the product measure $\Pt^\Eb$, and such that if first-passage percolation is performed with $\big(I,\{\tau_e\}_{e\in\Eb}\big)$ and $\big(I^\prime,\{\tau_e^\prime\}_{e\in\Eb}\big)$, respectively, then with probability one there exists an $\Nc<\infty$ such that
\be\label{eq:coupling:c}
T(I,v_{n,i})=\Tp(I',v_{n,i})\quad\text{and}\quad B_t=B_t^\prime,
\ee
for all $i$, all $n\geq\Nc$, and all $t\geq\Nc$.
\end{prop}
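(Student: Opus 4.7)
The plan is to exploit the regenerative structure of Section~\ref{sec:patch} together with the absolutely continuous component of $\Pt$. The guiding principle is Lemma~\ref{lma:Ank}: if both processes arrive at a common regeneration vertex $\hat v_\rho$ at exactly the same absolute time, and all passage times beyond level $\rho$ agree in the two coupled families, then by part \emph{a)} all subsequent travel times coincide, and an analogous statement in the other direction handles the leftward regime. The problem therefore reduces to engineering a coupling that matches the absolute arrival times at one regeneration point on each side.

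I would first decompose $\Pt = (1-q)F + qG$ where $G$ is a nontrivial probability measure with a density and $q>0$, and generate each $\tau_e$ by drawing a Bernoulli mark $B_e$ with $P(B_e=1)=q$ and then sampling $\tau_e$ from $G$ when $B_e=1$ and from $F$ otherwise. In the coupling I take the $B_e$'s and the $F$-distributed variables to be common to the two processes, so the marginals of $\{\tau_e\}_{e\in\Eb}$ and $\{\tau_e^\prime\}_{e\in\Eb}$ remain $\Pt^{\Eb}$, while the $G$-distributed variables provide a continuous reservoir of randomness to exploit. Using these marks I would define a strengthened regeneration event $\tilde A_n \subseteq A_n$ by additionally requiring a designated edge $e_n^\ast \in \hat E_n$ lying on $\gamma_n$ to satisfy $B_{e_n^\ast}=1$. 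The events $\{\tilde A_{n_k}\}_{k\in\Z}$ are i.i.d.\ with positive probability and produce a thinned sequence of regeneration levels $\tilde\rho_k$.

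The core step is to use these thinned regenerations to annihilate the discrepancy $D_k := T(I,\hat v_{\tilde\rho_k}) - \Tp(I',\hat v_{\tilde\rho_k})$. If the $G$-distributed passage times are initially drawn independently in the two processes, then after the initial transient $\{D_k\}$ is a random walk with i.i.d.\ symmetric zero-mean increments of finite variance, hence recurrent to any open interval around $0$. At each $\tilde\rho_k$ I would attempt the following: conditional on everything already revealed, redraw $(\tau_{e_{n_k}^\ast}, \tau_{e_{n_k}^\ast}^\prime)$ from a joint distribution with both marginals equal to $G$, designed so that with positive probability $\tau_{e_{n_k}^\ast} - \tau_{e_{n_k}^\ast}^\prime = -D_k$, provided $|D_k|$ lies in a fixed interval $J$ determined by the support and density of $G$. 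Existence of such a matching coupling is a standard Ornstein-type construction that uses the density of $G$. Since $\{D_k\}$ visits $J$ infinitely often and each attempt succeeds with probability bounded below by some $p_0>0$, a Borel--Cantelli argument delivers a finite random level $\rho^\ast$ at which the match succeeds. Beyond $\rho^\ast$ I would set $\tau_e = \tau_e^\prime$ for all $e$; this preserves the product-measure marginal on $\Eb_m^c$ because the two families are both i.i.d.\ under $\Pt$. Lemma~\ref{lma:Ank} then yields $T(I,v_{n,i}) = \Tp(I',v_{n,i})$ for all $n \geq \rho^\ast + M$.

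The mirror argument, carried out with the role of ``right'' and ``left'' interchanged, produces an analogous matched level $\rho^{\ast\ast}$ on the left of $I\cup I'$; combining the two gives $B_t = B_t^\prime$ for $t \geq \Nc$, because outside the finite window $[\rho^{\ast\ast}-M, \rho^\ast+M]$ travel times agree by construction, while inside the window every vertex is infected by some finite deterministic time in both processes. The main obstacle is the core step, where one must simultaneously control (i) that $\{D_k\}$ returns to the match-feasible window $J$ infinitely often, (ii) that the conditional matching at each attempt is a valid coupling with both marginals equal to $G$, and (iii) that failed attempts leave the subsequent conditional distribution of the configuration unchanged, so the next attempt is genuinely independent. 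Point (iii) is the most delicate and is ensured by arranging each attempt to consume only the fresh $G$-randomness at the edge $e_{n_k}^\ast$, whose distribution prior to the attempt is (by construction) independent of the past used to compute $D_k$.
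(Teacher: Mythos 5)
Your overall architecture is the right one and matches the paper's: make the two families of passage times identical except on a sparse sequence of designated edges that lie on forced geodesic segments inside regeneration windows, and use the residual randomness at those edges to drive the discrepancy in arrival times to zero. However, the execution of the key cancellation step contains genuine gaps. First, your claim that $\{D_k\}$ is a recurrent random walk rests on its increments being ``i.i.d.\ symmetric zero-mean of finite variance.'' The increments are differences of block travel times between consecutive regeneration points, and these have finite variance (indeed finite mean) only under moment assumptions on $\Pt$ that the proposition does not make; a symmetric walk on $\R$ with infinite mean can be transient, so Chung--Fuchs is not available for free. The paper avoids this entirely: in Lemma~\ref{lma:coupling}\emph{a)} the two sequences are built so that they agree except for occasional shifts of exactly $\pm\delta$, where $\delta$ is chosen with $\Td=m\delta$ and $2\delta\le b-a$; the discrepancy is then a lazy simple random walk on $\delta\Z$ with \emph{bounded} increments, which is recurrent with no moment hypotheses. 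Second, your point (iii) is false as stated. After a failed one-shot matching attempt, the conditional law of $\tau_{e^\ast}-\tau'_{e^\ast}$ given failure is \emph{not} the law of the difference of two independent $G$-variables (in particular one cannot arrange ``success or else equality,'' since $G_1\ast\delta_d=G_1$ is impossible for $d\neq0$). So once attempts begin, $\{D_k\}$ is no longer the i.i.d.-increment symmetric walk whose recurrence you invoked, and nothing in the proposal controls the drift that repeated conditioning on failure can introduce. Ensuring independence of \emph{future} blocks from the failure event does not address this. Again, the $\pm\delta$ lattice construction is precisely the device that keeps the post-attempt conditional structure a symmetric walk.

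There is also a problem with your strengthened regeneration event $\tilde A_n$. You draw the Bernoulli marks $B_e$ and the $F$-parts commonly but the $G$-parts independently in the two processes. The event $A_n$ requires $\tau_e\le\tlow$ on $\hat E_n$; if $e^\ast_n\in\hat E_n$ carries $B_{e^\ast_n}=1$, whether $\tau_{e^\ast_n}\le\tlow$ holds then depends on the independent $G$-draws, so $\tilde A_n$ may occur for one process and not the other, and the forced common vertex $\hat v_{n+M}$ of Lemma~\ref{lma:Ank} is lost. Moreover the density component $G$ produced by the Lebesgue decomposition need not be supported where it is compatible with $\tau\le\tlow$ at all. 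The paper's construction resolves this by choosing $a<\tlow<\thi<b$ inside an interval on which $\Pt$ has density bounded below, defining $A_n^\ast$ \emph{without} any condition on the designated edge, and then drawing the designated pair as $(\theta_k,\theta'_k)$ with both marginals $\Pt(\,\cdot\,|\tau\le\tlow)$ precisely when $A^\ast_{l_k}$ occurs, so that $A_{l_k}$ occurs or fails simultaneously for both processes and the marginals remain $\Pt$. In short: your reduction to a delayed one-dimensional coupling is the same as the paper's, but the coupling you propose for that one-dimensional problem does not go through as written; the Lindvall-type $\pm\delta$ construction of Lemma~\ref{lma:coupling} (or a correct Mineka/Ornstein argument controlling the conditional increment law) is needed.
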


When the passage time distribution $\Pt$ is discrete, i.e., $\Pt(\Lambda)=1$ for the set of point masses
$$
\Lambda:=\{t_j\in[0,\infty):\Pt(t_j)>0\},
$$
the statement of Proposition \ref{prop:coupling:c} is not true in general (cf.\ Remark \ref{rem:no1dcoupling}). In the discrete case, we will therefore restrict our attention to the case of $(K,d)$-tubes.

\begin{prop}[Coupling, discrete times]\label{prop:coupling:d}
Let $I$ and $I^\prime$ be finite subsets of the set of vertices $\Vb$ of the $(K,d)$-tube, for $K,d\geq2$. Assume that the passage time distribution $\Pt$ is such that $\Pt(\Lambda)=1$ for the set of point masses $\Lambda$ and that either of the following hold:
\begin{enumerate}[\quad a)]
\item\label{eq:dPtcond} there are $t_j\in\Lambda$ and integers $n_j$ for $j$ in some finite set of indices $J^\ast$, such that
$$\sum_{j\in J^\ast}n_j\text{ is odd},\quad\text{and}\quad\sum_{j\in J^\ast}n_jt_j=0.
$$
\item\label{eq:distcond} $\dist(\mathbf{x},\mathbf{y})$ is even, for all $\mathbf{x}\in I$, $\mathbf{y}\in I^\prime$.
\end{enumerate}
For any $m\geq0$, there exists a coupling of $\{\tau_e\}_{e\in\Eb_m^c}$ and $\{\tau_e^\prime\}_{e\in\Eb_m^c}$ such that if $\{\tau_e\}_{e\in\Eb_m}$ and $\{\tau_e^\prime\}_{e\in\Eb_m}$ each have distribution $\Pt^{\Eb_m}$, then the marginal distributions of $\{\tau_e\}_{e\in\Eb}$ and $\{\tau_e^\prime\}_{e\in\Eb}$ are given by the product measure $\Pt^\Eb$, and such that if first-passage percolation is performed with $\big(I,\{\tau_e\}_{e\in\Eb}\big)$ and $\big(I^\prime,\{\tau_e^\prime\}_{e\in\Eb}\big)$, respectively, then with probability one there exists an $\Nc<\infty$ such that
\be\label{eq:coupling:d}
T(I,v_{n,i})=\Tp(I',v_{n,i})\quad\text{and}\quad B_t=B_t^\prime,
\ee
for all $i$, all $n\geq\Nc$, and all $t\geq\Nc$.
\end{prop}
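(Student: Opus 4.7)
The plan is to adapt the coupling used for Proposition~\ref{prop:coupling:c} by replacing the absolute-continuity step with a lattice-recurrence argument for the difference of travel times; conditions~\emph{(a)} and~\emph{(b)} are precisely what is needed to match $T(I,\hat v)$ and $\Tp(I',\hat v)$ at a common regeneration point in the discrete setting. I would begin by running the two processes $\bigl(I,\{\tau_e\}\bigr)$ and $\bigl(I',\{\tau'_e\}\bigr)$ with independent passage times on $\Eb_m^c$. Applying the regenerative framework of Section~\ref{sec:patch} separately to both families, the events $A_n$ and $A'_n$ jointly occur with positive probability at each sufficiently-spaced $n$, so there are infinitely many common regeneration levels $\rho_k$ past level $m$, almost surely. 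By Lemma~\ref{lma:Ank}, past each such $\rho_k$ all infection in both processes flows through the same bottleneck vertex $\hat v_{\rho_k}$, and the subsequent evolution depends only on $T(I,\hat v_{\rho_k})$, $\Tp(I',\hat v_{\rho_k})$, and the edges strictly past $\rho_k$.

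The central object is then the difference $D_k := T(I,\hat v_{\rho_k})-\Tp(I',\hat v_{\rho_k})$. By the common-regeneration property (Lemma~\ref{lma:patch}) and the symmetry $(\tau,\tau')\leftrightarrow(\tau',\tau)$, the increments $\{D_k-D_{k-1}\}_{k\geq1}$ form an i.i.d.\ symmetric sequence with finite variance (Proposition~\ref{prop:pmoments} applies, since the $(K,d)$-tube with $K,d\geq2$ admits two edge-disjoint paths between successive regeneration vertices). Hence $\{D_k\}$ is a symmetric one-dimensional random walk with finite variance on the countable subgroup $G\subseteq\R$ generated by its increment distribution, and by Chung--Fuchs recurrence it returns to every point of the coset $D_0+G$ infinitely often.

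The decisive step, and the main obstacle, is to verify that $0\in D_0+G$. Each increment of $D_k$ is a difference of weights over two paths between the same pair of regeneration vertices, and those paths have lengths of identical parity since the $(K,d)$-tube is bipartite. Consequently $G\subseteq\{\sum_jn_jt_j:n_j\in\Z,\ \sum_jn_j\text{ even}\}$. Under hypothesis~\emph{(b)}, all $I$-to-$\hat v_{\rho_0}$ and $I'$-to-$\hat v_{\rho_0}$ paths share the same length parity, so $D_0$ is itself a combination with even coefficient-sum and lies in $G$. Under hypothesis~\emph{(a)}, the identity $\sum n_jt_j=0$ with $\sum n_j$ odd collapses the even-sum sublattice onto the full lattice $\sum_j\Z t_j$, so again $D_0\in G$. (Without either hypothesis, e.g.\ $\Lambda=\{1,3\}$ with $I,I'$ at odd distance, $D_0$ is forced to be odd while $G=2\Z$, and no such coupling can exist.) Thus $D_K=0$ for some a.s.-finite random index $K$.

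Finally I would complete the coupling by declaring $\tau_e=\tau'_e$, drawn i.i.d.\ from $\Pt$, for every edge strictly past level $\rho_K$; this preserves the marginal $\Pt^{\Eb_m^c}$ of each family. Because $T(I,\hat v_{\rho_K})=\Tp(I',\hat v_{\rho_K})$ and the two edge-weight families coincide past $\rho_K$, Lemma~\ref{lma:Ank} forces $T(I,v)=\Tp(I',v)$ for every vertex $v$ at level $\geq\rho_K+M$, which in turn yields $B_t=B'_t$ for $t$ exceeding some a.s.-finite $\Nc$. This establishes~\eqref{eq:coupling:d}.
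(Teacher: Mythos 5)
Your high-level intuition (a parity obstruction plus a random-walk recurrence argument at common regeneration points) is in the right spirit, but the decisive step fails. You let the two processes evolve independently and wait for the difference $D_k=T(I,\hat v_{\rho_k})-\Tp(I',\hat v_{\rho_k})$ to hit $0$, invoking Chung--Fuchs recurrence on ``the countable subgroup $G$ generated by the increments.'' Chung--Fuchs gives only neighbourhood recurrence for a non-lattice walk; point recurrence requires the walk to live on a discrete lattice $h\Z$. Each increment of $D_k$ has the form $\sum_j m_jt_j$ with $m_j\in\Z$, so if $\Lambda$ contains $r$ rationally independent atoms the walk $D_k$ is the image of a mean-zero walk on $\Z^r$ under $(m_j)\mapsto\sum_jm_jt_j$, and hitting $D_k=0$ amounts to that $\Z^r$-walk returning to a fixed point --- which fails almost surely for $r\geq3$ by transience. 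So your scheme does not terminate for, say, $\Lambda=\{1,\sqrt2,\sqrt3\}$. This is exactly the difficulty that Lemma~\ref{lma:coupling}\emph{b)} is built to circumvent: there the two sequences are \emph{actively} coupled to be identical except on the shrinking index set $J_{k-1}$ of still-unbalanced atoms, so that each atom-count difference $Z_j^n$ is a one-dimensional lazy simple random walk frozen at $0$ once it hits $0$; recurrence is then applied one coordinate at a time, not to the $r$-dimensional walk. A passive ``run independently and wait'' coupling cannot be repaired into this.

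Two further gaps. First, even granting recurrence, you assert that $G$ equals the full even-coefficient-sum lattice $\{\sum_jn_jt_j:\sum_jn_j\text{ even}\}$; the group generated by the actually occurring increments could a priori be a proper sublattice, and under hypothesis~\emph{a)} you need the relation $\sum_{j\in J^\ast}n_jt_j=0$ to be realisable by the coupling dynamics, not merely to hold as an algebraic identity. The paper handles this constructively in two stages: it first forces the geodesic-length difference $N(I,v_{n,i})-N'(I',v_{n,i})$ onto the correct residue ($0$ or the odd number $n^\ast=\sum_{j\in J^\ast}n_j$) via explicit events $C_n$, $D_n$ that shift path lengths by $2$, and only then applies Lemma~\ref{lma:coupling}\emph{b)} along a designated sequence of bottleneck edges. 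Second, your final step of ``declaring $\tau_e=\tau'_e$ past level $\rho_K$'' redefines one family on a random region determined by both families; preserving the product marginal requires setting the coupling up adaptively from the start (as the paper does, by prescribing the joint law edge by edge), rather than overwriting weights after the fact.
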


Before we construct the couplings, we focus on the promised 0--1 law that follows from Proposition \ref{prop:coupling:c} and \ref{prop:coupling:d}. For this we will use \emph{L\'evy's 0--1 law}. It states that for $\sigma$-algebras $\{\F_t\}_{t\geq0}$ such that $\F_t\uparrow\F_\infty$ as $t\raw\infty$, if $A\in\F_\infty$, then $P(A|\F_t)\raw\ind_A$, as $n\raw\infty$, almost surely. A proof for the discrete case can be found in e.g.\ \citet[Theorem 4.5.8]{durrett05}. The continuous case follows via the Martingale Convergence Theorem.

Recall that $\T_t=\sigma\big(\{B_s\}_{s\geq t}\big)$, $\T=\bigcap_{t\geq0}\T_t$, and let $\F_t:=\sigma\big(\{B_s\}_{0\leq s\leq t}\big)$, As before $B_s$ is the set of infected vertices at time $s$, and we may think of $\T_t$ as the $\sigma$-algebra of events $A\in\sigma\big(\bigcup_{t\geq0}\F_t\big)$ that do not depend on the times at which vertices were infected before time $t$. Note that Theorem~\ref{int:0--1law} is a special case of Theorem~\ref{thm:0--1law}.

\begin{thm}[0--1 law]\label{thm:0--1law}
Consider first-passage percolation performed under the assumptions of either Proposition \ref{prop:coupling:c} or \ref{prop:coupling:d}. Then $P(A)\in\{0,1\}$, for any event $A\in\T$.
\end{thm}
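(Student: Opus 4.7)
The plan is to exploit the exact coupling of Propositions \ref{prop:coupling:c}--\ref{prop:coupling:d} to prove that every $A\in\T$ is independent of $\sigma(\{\tau_e\}_{e\in\Eb})$, and hence independent of itself, which forces $P(A)\in\{0,1\}$.

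Fix $A\in\T$ and $m\ge 0$. I would invoke the coupling with $I'=I$, obtaining on a single probability space two families $\{\tau_e\}_{e\in\Eb}$ and $\{\tau_e^\prime\}_{e\in\Eb}$, each distributed as $\Pt^\Eb$, such that the restrictions to $\Eb_m$ are independent copies of $\Pt^{\Eb_m}$, also independent of the coupled restrictions to $\Eb_m^c$, while the corresponding processes $\{B_t\}$ and $\{B_t'\}$ satisfy $B_t=B_t'$ for all $t\ge\Nc$, with $\Nc<\infty$ almost surely. In particular, $\{\tau_e\}_{e\in\Eb_m}$ is independent of the entire primed family $\{\tau_e^\prime\}_{e\in\Eb}$.

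Let $A'$ denote the event obtained from the same measurable description of $A$ but applied to $\{B_t'\}_{t\ge 0}$. Because $I=I'$, the two processes have the same law, so $P(A')=P(A)$. Since $A\in\T_t$ one may write $A=\{(B_s)_{s\ge t}\in\Omega_t\}$ for some measurable $\Omega_t$; on $\{\Nc\le t\}$ the tails of the two processes coincide, hence $\ind_A=\ind_{A'}$ there. Letting $t\to\infty$ gives $\ind_A=\ind_{A'}$ almost surely. For any bounded measurable function $F$ of $\{\tau_e\}_{e\in\Eb_m}$ it then follows that
\[
\E\big[F\cdot\ind_A\big]\;=\;\E\big[F\cdot\ind_{A'}\big]\;=\;\E[F]\,P(A'),
\]
the second equality being the independence of $\{\tau_e\}_{e\in\Eb_m}$ from $\{\tau_e^\prime\}_{e\in\Eb}$. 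Hence $A$ is independent of $\sigma(\{\tau_e\}_{e\in\Eb_m})$.

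To conclude, $\bigcup_{m\ge 0}\sigma(\{\tau_e\}_{e\in\Eb_m})$ is a $\pi$-system generating $\sigma(\{\tau_e\}_{e\in\Eb})$, so $A$ is independent of the full $\sigma$-algebra of passage times. Since $\{B_t\}_{t\ge 0}$, and therefore $A$, is measurable with respect to $\{\tau_e\}_{e\in\Eb}$, the event $A$ is independent of itself, so $P(A)=P(A)^2\in\{0,1\}$. The one piece of bookkeeping to take care of is the careful formulation of ``the same event $A$ applied to the primed process,'' together with the verification that this twin event satisfies $\ind_A=\ind_{A'}$ almost surely under the coupling; once that identification is in place, everything else is a routine appeal to the coupling and to independence.
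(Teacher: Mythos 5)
Your argument is correct, and it proves the theorem by a genuinely different route than the paper. The paper fixes $t$, applies the coupling propositions at the \emph{random} level $m=\nu_t+1$ (so that the edge weights determining $\F_t$ and $\F_t'$ lie in the freely chosen block $\Eb_m$), deduces that $P(A\mid\F_t)=P(A\mid\F_t')$ is nonrandom and equal to $P(A)$, and then invokes L\'evy's 0--1 law $P(A\mid\F_t)\to\ind_A$ to conclude. You instead run a Kolmogorov-style argument: for each \emph{deterministic} $m$ you arrange the three blocks $\{\tau_e\}_{e\in\Eb_m}$, $\{\tau_e'\}_{e\in\Eb_m}$ and the coupled pair on $\Eb_m^c$ to be mutually independent (which the propositions permit, and which is anyway forced if both marginals are to be product measures), use the exact coupling to get $\ind_A=\ind_{A'}$ a.s., and conclude that $A$ is independent of $\sigma(\{\tau_e\}_{e\in\Eb_m})$, hence by the $\pi$--$\lambda$ theorem of all passage times and of itself. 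Your version buys two things: it avoids L\'evy's 0--1 law entirely, and it sidesteps the slightly delicate point that the propositions are stated for deterministic $m$ while the paper applies them at the random level $\nu_t+1$ (which strictly requires an extra conditioning on $\F_t$ that the paper leaves implicit). The paper's version is shorter once L\'evy's law is granted and makes the heuristic ``$A$ does not depend on the early history of the infection'' more visible. The one subtlety you correctly flag --- that $A\in\T_t$ only gives, for each $t$, some $\Omega_t$ measurable in the coordinates $s\ge t$ with $A=\{(B_s)_{s\ge t}\in\Omega_t\}$, and these representations need only agree up to null sets when transferred to the primed process --- does go through, since the primed process has the same law and you only need $\ind_A=\ind_{A'}$ almost surely; the paper's own proof relies on the same identification (with a random $t=\Nc$, in fact), so your treatment is no less rigorous than the original.
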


\begin{proof}[Proof of Theorem \ref{thm:0--1law} from Propositions \ref{prop:coupling:c} and \ref{prop:coupling:d}]
Consider two infections with the respective sets of passage times $\{\tau_e\}_{e\in\Eb}$ and $\{\tau_e^\prime\}_{e\in\Eb}$. For $t\geq0$, let $\F_t$ and $\F_t^\prime$ be $\sigma$-algebras generated by their respective realisations up to time $t$. Let
$$
\nu_t=\max\big\{n\geq0:(B_t\cup B_t^\prime)\cap(\Vb_{\G_n}\cup\Vb_{\G_{-n}})\neq\emptyset\big\}
$$
denote the furthest level (in positive or negative direction) infected at time $t$. Clearly $\nu_t<\infty$ almost surely, for every $t<\infty$.

For any fixed $t\geq0$, by Propositions \ref{prop:coupling:c} and \ref{prop:coupling:d}, there is a coupling of $\{\tau_e\}_{e\in\Eb_{\nu_t+1}^c}$ and $\{\tau_e^\prime\}_{e\in\Eb_{\nu_t+1}^c}$, such that there exists an almost surely finite time $\Nc$, such that $B_s=B_s^\prime$ for all $s\geq\Nc$. Since $A\in\T_{\Nc}$, the outcome of $A$ only depends on $B_s$ for $s\geq\Nc$. In particular
$$
P(A|\F_t)=P(A|\F_t^\prime).
$$
Thus, $P(A|\F_t)$ is nonrandom and equals $P(A)$, for all $t\geq0$. But, according to L\'evy's 0--1 law, $P(A|\F_t)\raw\ind_A$ as $t\raw\infty$, almost surely. Hence, $P(A)=\ind_A$ almost surely, and $P(A)\in\{0,1\}$.
\end{proof}

\subsection{Exact coupling of time-delayed infections on $\Z$}\label{sec:dcoupling}

Before proving Proposition \ref{prop:coupling:c} and \ref{prop:coupling:d}, we shall first provide a coupling of two infections on $\Z$, where one is delayed for some time $\Td$. This lemma will figure as a key step in the proof of Proposition \ref{prop:coupling:c} and \ref{prop:coupling:d}.

\begin{lma}\label{lma:coupling}
Let $\Td$ be a non-negative constant, and assume that either of the following hold:
\begin{enumerate}[\quad a)]
\item\label{lma:coupling:c} $\Pt$ has an absolutely continuous component (with respect to Lebesgue measure).
\item\label{lma:coupling:d} $\Pt$ is such that for some finite index set $J$, there are non-negative integers $n_j$ and $\np_j$, such that $\sum_{j\in J}n_j=\sum_{j\in J}\np_j$, and for atoms $t_j\in\Lambda$ of $\Pt$
\be\label{eq:dPtdelay}
\sum_{j\in J}n_jt_j\;=\;\sum_{j\in J}\np_jt_j+\Td.
\ee
\end{enumerate}
Then, there exists a coupling of $\{\tau_k\}_{k\geq1}$ and $\{\tp_k\}_{k\geq1}$ such that their marginal distributions are that of i.i.d.\ random variables with distribution $\Pt$, and such that almost surely
\be\label{eq:Zcoupling}
\sum_{k=1}^n\tau_k\;=\;\Td+\sum_{k=1}^n\tp_k,\quad\text{for large }n.
\ee
\end{lma}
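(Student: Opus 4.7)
The plan is to construct, in both cases, a coupling under which the cumulative deficit $D_n := \sum_{k=1}^n (\tau_k - \tp_k)$ evolves as a lazy one-dimensional simple random walk on a lattice containing $\Td$; since such a walk is recurrent, there will be an a.s.\ finite stopping time $\nu$ with $D_\nu = \Td$, after which I switch to the identity coupling $\tau_k = \tp_k \sim \Pt$ (freshly sampled, independent of the past), freezing $D_n \equiv \Td$ for all $n \ge \nu$. A short computation conditioning on $\{k \le \nu\}$ and $\{k > \nu\}$ (both of which are measurable with respect to the step-$(k-1)$ past, hence independent of the fresh sample at step $k$) then shows that both pre- and post-$\nu$ the marginal distribution of $\tau_k$ is $\Pt$ and independence across $k$ is preserved, so each of $\{\tau_k\}$ and $\{\tp_k\}$ is overall i.i.d.\ $\Pt$.

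For part~(\ref{lma:coupling:c}), the absolutely continuous component yields constants $a<b$ and $c>0$ with $\Pt$ having Lebesgue density at least $c$ on $[a,b]$. Setting $\delta := (b-a)/2$ and choosing $N \in \Z_+$ with $\Delta_0 := \Td/N \le \delta$, at each step I independently sample one of three outcomes with probabilities $1-2p,\, p,\, p$ (for $p>0$ small): (i)~$\tau_k = \tp_k \sim \mu_0$; (ii)~$\tp_k \sim \text{Unif}[a,a+\delta]$ and $\tau_k := \tp_k + \Delta_0$; (iii)~$\tau_k \sim \text{Unif}[a,a+\delta]$ and $\tp_k := \tau_k + \Delta_0$. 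Taking
\[
\mu_0 := \frac{\Pt - p\,\text{Unif}[a,a+\delta] - p\,\text{Unif}[a+\Delta_0,a+\delta+\Delta_0]}{1-2p}
\]
(a probability measure for $p$ small enough by the density bound on $[a,b]$), the manifest symmetry between (ii) and (iii) makes both marginal sequences i.i.d.\ $\Pt$. Moreover $\tau_k - \tp_k \in \{0, \pm \Delta_0\}$ with respective probabilities $1-2p,\,p,\,p$, so $D_n$ is a lazy simple random walk on $\Delta_0 \Z$ which hits $N\Delta_0 = \Td$ almost surely.

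For part~(\ref{lma:coupling:d}), let $N := \sum_j n_j = \sum_j \np_j$ and fix enumerations $\mathbf{a}, \mathbf{a}' \in \Lambda^N$ of the multisets $\{t_j \text{ with multiplicity } n_j\}_{j \in J}$ and $\{t_j \text{ with multiplicity } \np_j\}_{j \in J}$, so that $\sum_k \mathbf{a}_k - \sum_k \mathbf{a}'_k = \Td$ by hypothesis. Partition $\Z_+$ into consecutive blocks of length $N$, and for each block independently sample one of three types with probabilities $1-2p,\,p,\,p$: (i)~$\tau = \tp$ jointly drawn as a $\Lambda^N$-valued vector of law $M_0 := (\Pt^{\otimes N} - p\delta_{\mathbf{a}} - p\delta_{\mathbf{a}'})/(1-2p)$ (a probability measure for $p \le \min(\Pt^{\otimes N}(\mathbf{a}), \Pt^{\otimes N}(\mathbf{a}'))$); (ii)~$\tau := \mathbf{a}$, $\tp := \mathbf{a}'$; (iii)~$\tau := \mathbf{a}'$, $\tp := \mathbf{a}$. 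Although on the shift blocks (ii) and (iii) the individual-coordinate marginals of $\tau$ are deterministic, the mixture over the three types gives unconditionally the product measure $\Pt^{\otimes N}$ for each of $\tau$ and $\tp$; combined with independence across blocks this yields the required overall i.i.d.\ $\Pt$ marginals. The per-block sum deficit takes values $\{+\Td, 0, -\Td\}$ with probabilities $p,\,1-2p,\,p$, so the block-sum process is a lazy simple random walk on $\Td\Z$ that hits $\Td$ at an a.s.\ finite block index, as needed.

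The hard part is reconciling two competing demands: each marginal sequence must be i.i.d.\ $\Pt$ (a rigid constraint --- no single-step coupling can realize $\tau_k - \tp_k = \Td$ a.s.\ with $\Pt$-marginals, as that would force $\Pt$ to be invariant under translation by $\Td$), while the joint law must still drive $D_n$ to hit the specific real number $\Td$ exactly. Case~(\ref{lma:coupling:c}) handles this at the per-step level via arbitrarily fine AC-based shifts $\Delta_0 = \Td/N$. Case~(\ref{lma:coupling:d}) cannot be handled per-step in general, since per-step differences are confined to the discrete set $\Lambda - \Lambda$ which need not contain any useful rational multiple of $\Td$; the block construction sidesteps this by realizing $\pm\Td$ as a length-$N$ difference of multisets, and the residual measure $M_0$ is precisely what repairs the product-measure marginal structure that is lost on shift blocks.
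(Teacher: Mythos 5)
Your proposal is correct, and for part \emph{a)} it is essentially the paper's own construction: the paper also carves a sub-probability $c\,\lambda(\,\cdot\cap[a,a+2\delta])$ out of $\Pt$, couples the two variables identically on the residual, and on the remaining event produces a difference of $\pm\delta$ with $\delta$ a divisor of $\Td$, so that $D_n$ is a lazy simple random walk on $\delta\Z$ killed at the identity coupling once it hits the target; your two-offset-uniforms parametrisation is just a cosmetic variant of the paper's fold-over map $\tau\mapsto\tau\pm\delta$ on $[a,a+2\delta]$.

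For part \emph{b)} your route is genuinely different. The paper works step by step and adaptively: it tracks, for each atom $t_j$, the difference $Z_j^n$ between the number of times $\tau_k=t_j$ and $\tp_k=t_j$ (offset by $n_j-\np_j$), resamples $\tau_k$ and $\tp_k$ independently on the set of atoms whose counts have not yet matched, and argues that each coordinate $Z_j^n$ is a lazy recurrent walk absorbed at $0$, with the constraint $\sum_j Z_j^n=0$ guaranteeing the resampling set never degenerates to a single atom. You instead group indices into blocks of length $N=\sum_j n_j$ and swap the two multisets wholesale, so the block-sum deficit is a single lazy walk on $\Td\Z$ with steps $0,\pm\Td$. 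Your version is easier to verify (one one-dimensional walk; marginals checked by a three-way mixture identity at the block level, exactly as in the continuous case), at the mild cost that the residual measure must be taken on the product space $\Lambda^N$ rather than on $\Lambda$, and that the exchange happens only at block boundaries; since the lemma only asserts the partial-sum identity \eqref{eq:Zcoupling} for large $n$, and equality at a block boundary propagates to all later $n$ once the identity coupling takes over, this is harmless for the application. Your framing of the switch at the a.s.\ finite hitting time as a stopping time measurable in the pre-$k$ past, so that freshness of the step-$k$ sample preserves the i.i.d.\ $\Pt$ marginals, is the right justification and is the same (largely implicit) argument the paper relies on.
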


The key to prove this lemma is to identify suitable random walks. The identification of the random walk in case \emph{a)} heavily exploits ideas similar to those found in \citet[Chapter III.5]{lindvall02}. In case \emph{b)}, a multi dimensional random walk will be based on condition (\ref{eq:dPtdelay}). This walk is then coupled with techniques found e.g.\ in \citet[Chapter II.12--17]{lindvall02}.

\begin{proof}[Proof of case \ref{lma:coupling:c})]
Let $[a,b]$ be an interval on which $\Pt$ has density $\geq c$, for some $c>0$. Define
$$
\delta:=\max\Big\{d\geq0:d\leq\frac{b-a}{2},d=\frac{\Td}{m}\text{ for some }m\in\N\Big\},
$$
and couple $\{\tau_k\}_{k\geq1}$ and $\{\tau_k^\prime\}_{k\geq1}$ in the following way. With probability $1-c2\delta$ we choose $\tau_k=\tp_k$, drawn from the distribution
$$
\tilde{\Pt}(\;\cdot\;):=\big(\Pt(\;\cdot\;)-c\lambda(\;\cdot\cap[a,a+2\delta])\big)\big/(1-c2\delta),
$$
where $\lambda$ denotes Lebesgue measure. With the remaining probability $c2\delta$, draw $\tau_k$ uniformly on the interval $[a,a+2\delta]$, and choose $\tp_k$ as
\bea
\tp_k=
\left\{
\begin{aligned}
&\tau_k+\delta, & &\text{if }\tau_k\leq a+\delta,\\
&\tau_k-\delta, & &\text{if }\tau_k> a+\delta.
\end{aligned}
\right.
\eea
That $\tp_k$ also is uniformly distributed on $[a,a+2\delta]$ is immediate. Thus, it is easy to see that the marginal distribution of both $\tau_k$ and $\tp_k$ is $\Pt$, and this is indeed a coupling of the two infections.

\begin{figure}[htbp]
\begin{center}
\resizebox{0.9\textwidth}{!}{\input{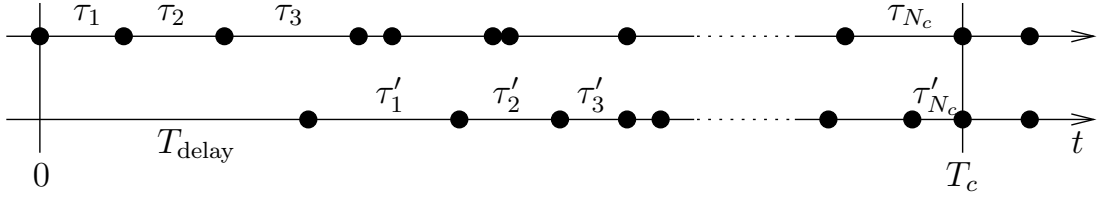}}
\end{center}
\caption{The dots represent the times at which the respective infection spreads. In this realisation $\tau_1=\tp_1-\delta$, $\tau_2=\tp_2$ and $\tau_3=\tp_3+\delta$.}
\label{fig:DnonZdot}
\end{figure}
The coupling is constructed so that each time $\tau_k$ and $\tp_k$ are chosen differently, the difference $D_n:=\Td+\sum_{k=1}^n(\tp_k-\tau_k)$ will jump $\pm\delta$. Since $\Td=m\delta$, for some integer $m$, $\{D_n\}_{n\geq1}$ constitutes a simple random walk on $\delta\Z$. Let $\Nc$ denote the first $n$ for which $D_n$ hits zero. From this moment on, $\tau_k$ and $\tp_k$ are chosen identically, and (\ref{eq:Zcoupling}) holds for $n\geq\Nc$. That the coupling is successful follows from the recurrence of 1-dimensional (lazy) simple random walks.\\

\noindent
{\it Proof of case \ref{lma:coupling:d})}.
By assumption, for some set $\{t_j\}_{j\in J}\subseteq\Lambda$ of atoms for the distribution $\Pt$, there are non-negative integers $n_j$ and $\np_j$ such that $\sum_{j\in J}n_j=\sum_{j\in J}\np_j$ and (\ref{eq:dPtdelay}) holds. It is easily seen that we may assume that $J$, $n_j$ and $\np_j$ are chosen such that for each $j\in J$, exactly one of the integers $n_j$ and $\np_j$ is positive. We introduce integer valued random variables
\bea
\begin{aligned}
X_j^n\;&=\;\#\{k\leq n:\tau_k=t_j\}-n_j,\\
Y_j^n\;&=\;\#\{k\leq n:\tp_k=t_j\}-\np_j.
\end{aligned}
\eea
Define $Z_j^n=X_j^n-Y_j^n$. It is clear that we from (\ref{eq:dPtdelay}) can conclude that (\ref{eq:Zcoupling}) holds, if $Z_j^n=0$ for all $j\in J$ and $\tau_k=\tp_k$ for all $k\leq n$ such that $\tau_k\not\in\{t_j\}_{j\in J}$ or $\tp_k\not\in\{t_j\}_{j\in J}$.

Let $J_n=\{j\in J:Z_j^n\neq0\}$, let $p_j=\Pt(t_j)$, and $q_n=\sum_{j\in J_n}p_j$. In particular, $J_0=J$. Couple $\{\tau_k\}_{k\geq1}$ and $\{\tp_k\}_{k\geq1}$ by choosing $\tau_k$ and $\tp_k$ identically from the distribution
$$
\tilde{\Pt}(\;\cdot\;):=\frac{1}{1-q_{k-1}}\bigg(\Pt(\;\cdot\;)-\sum_{j\in J_{k-1}}p_j\mathbf{1}_{\{t_j\}}(\;\cdot\;)\bigg)
$$
with probability $1-q_{k-1}$. With remaining probability $q_{k-1}$ we choose $\tau_k$ and $\tp_k$ independently with distribution $P(\tau=t_j)=p_j/q_{k-1}$, for $j\in J_{k-1}$. The marginal distribution of $\tau_k$ and $\tp_k$ is readily seen to be $\Pt$, whence this is a coupling of $\{\tau_k\}_{k\geq1}$ and $\{\tp_k\}_{k\geq1}$.

Note that $\tau_k=\tp_k$ for all $k$ such that $\tau_k\not\in\{t_j\}_{j\in J}$ and $\tp_k\not\in\{t_j\}_{j\in J}$. For each fixed $j\in J$, $\{Z_j^n\}_{n\geq0}$ will, as $n$ increases, jump $\pm1$ with equal probability. Hence, for fixed $j$, $\{Z_j^n\}_{n\geq0}$ constitutes a (lazy) simple random walk on $\Z$. Note that if $n^\ast$ denotes the first $n$ such that $Z_j^n=0$, then, by definition, $j\in J_n$ for $n<n^\ast$, but $j\not\in J_n$ for $n\geq n^\ast$.

By assumption we have that
$$
0\;=\;\sum_{j\in J}\left(n_j-\np_j\right)\;=\;\sum_{j\in J}Z_j^0\;=\;\sum_{j\in J}Z_j^n,\quad\text{for all }n\geq0.
$$
It follows that $|J_n|\neq1$ for all $n$. There will therefore always be a positive probability to choose $\tau_{n+1}\neq\tau_{n+1}^\prime$ as long as $Z_j^n\neq0$ for some $j$. From this observation and the recurrence of 1-dimensional simple random walks, we conclude that $N_c=\min\{n\geq0: Z_j^n=0\text{ for all }j\in J\}$ is almost surely finite.
\end{proof}

\subsection{Exact coupling of two infections}

In order to prove Proposition \ref{prop:coupling:c} and \ref{prop:coupling:d}, we will arrange matters so that Lemma \ref{lma:coupling} can be applied. We first outline the general idea. It follows from the regenerative behaviour that if $\tau_e=\tau_e^\prime$ for all $e\in\Eb$, then there is a real number $T_d$ such that
\bea
T(I,v_{n,i})-T'(I',v_{n,i})\,=\,T_d
\eea
for all $i$ and all $n$ large enough. The idea will be to assign identical passage times for both infections, that is $\tau_e=\tau_e^\prime$, except for certain edges which we make sure both infections have to pass. This generates a sequence of edges for which we invoke Lemma \ref{lma:coupling}. This will complete the coupling for positive levels $n$. The opposite direction is treated analogously.

To make this precise, recall the notation in (\ref{eq:hEn}) and (\ref{eq:Ank}). Introduce the notation $\hat{e}_{n+M}$ for the edge in $\gamma_n$ with endpoints $\hat{v}_{n+M}$ and $u$, where $\hat{v}_{n+M}$ is the vertex in $\Vb_{\G_{n+M}}$ first reached by $\gamma_{n}$, and $u$ the vertex first reached after $\hat{v}_{n+M}$ by $\gamma_{n}$. Define the event
$$
A_n^\ast:=\big\{\tau_e\leq\tlow, \forall e\in\hat{E}_n\setminus\{\hat{e}_{n+M}\}\big\}\cap\big\{\tau_e\geq\thi, \forall e\in E_n\setminus\hat{E}_n\big\}.
$$
Note that $A_n=A_n^\ast\cap\{\tau_{\hat{e}_{n+M}}\leq\tlow\}$ for $A_n$ as defined in (\ref{eq:Ank}).

\begin{proof}[{\bf Proof of Proposition \ref{prop:coupling:c}}]
By assumption, $\Pt$ has an absolutely continuous component, so suppose that $[a,b]$ is an interval on which $\Pt$ has density $\geq c>0$. Let $a<\tlow<\thi<b$ and choose $M$ in accordance with Lemma \ref{lma:Ank}. We may further assume that $I\cup I^\prime$ contains no vertex beyond level $m$. Let $l_k:=m+k(2M+1)$ for $k\geq0$. Couple $\{\tau_e\}_{e\in\Eb_m^c}$ and $\{\tau_e^\prime\}_{e\in\Eb_m^c}$ by choosing $\tau_e=\tp_e$ with distribution $\Pt$, independently for all $e$ at level $m$ or beyond such that $e\neq\hat{e}_{l_k+M}$ for some $k\geq0$. Independently for $k\geq0$, let
\bea
(\xi_k,\xi_k^\prime)=
\left\{
\begin{aligned}
&(\theta_k,\theta_k^\prime), & &\text{with probability }\Pt([0,\tlow]),\\
&(\eta_k,\eta_k), & &\text{with probability }1-\Pt([0,\tlow]),
\end{aligned}
\right.
\eea
where $\theta_k$ and $\theta_k^\prime$ are to be coupled below, so that they both have marginal distribution $\Pt(\;\cdot\;|\tau\leq\tlow)$, and $\eta_k$ has distribution $\Pt(\;\cdot\;|\tau>\tlow)$. For the set of edges $\{\hat{e}_{l_k+M},\text{ for }k\geq0\}$, we choose the pair
\bea
\left(\tau_{\hat{e}_{l_k+M}},\tp_{\hat{e}_{l_k+M}}\right)=
\left\{
\begin{aligned}
&(\xi_k,\xi_k^\prime), & &\text{if }A_{l_k}^\ast\text{ occurs}\\
&(\tau_k,\tau_k), & &\text{otherwise},
\end{aligned}
\right.
\eea
where $\tau_k$ is distributed according to $\Pt$, independently for all $k$. One realises from the coupling that the marginal distributions of both $\tau_e$ and $\tp_e$ is $\Pt$, for every edge $e$.

Note that the only edges for which $\tau_e$ and $\tp_e$ may differ, are the edges $\hat{e}_{l_k+M}$ for $k\geq0$ such that $A_{l_k}$ occurs. Let $\kappa_j$ denote the index $k$ for which $A_{l_k}$ occurs for the $j$th time. That
\be\label{eq:couplingtaurho}
\left(\tau_{\hat{e}_{l_k+M}},\tp_{\hat{e}_{l_k+M}}\right)=(\theta_k,\theta_k^\prime)
\ee
is equivalent to that $A_{l_k}$ occurs. Since $P(A_{l_k})>0$, we will have an infinite sequence $\{\kappa_j\}_{j\geq1}$ such that (\ref{eq:couplingtaurho}) holds. Via Lemma~\ref{lma:Ank} we conclude that $\hat{e}_{l_{\kappa_j}}$ has to be passed by both infections, and that
$$
T(I,\hat{v}_{l_{\kappa_j}+M})-T'(I',\hat{v}_{l_{\kappa_j}+M})=T(I,\hat{v}_{l_{\kappa_1}+M})-T'(I',\hat{v}_{l_{\kappa_1}+M})+\sum_{i=1}^{j-1}\theta_{\kappa_i}-\theta'_{\kappa_i}.
$$
Apply Lemma \ref{lma:coupling} to $\{\theta_{\kappa_j}\}_{j\geq1}$ and $\{\theta_{\kappa_j}^\prime\}_{j\geq1}$, with distribution $\Pt(\;\cdot\;|\tau\leq\tlow)$, and $\Td=\big|T(I,\hat{v}_{l_{\kappa_1}+M})-\Tp(I',\hat{v}_{l_{\kappa_1}+M})\big|$. Since $\Pt(\;\cdot\;|\tau\leq\tlow)$ is absolutely continuous on $[a,\tlow]$, it follows that $T(I,v_{n,i})=T'(I',v_{n,i})$ for all $i$, and all $n$ large enough. Both infections can be coupled analogously in the negative direction, which would complete the construction.
\end{proof}

The proof of Proposition~\ref{prop:coupling:d} is a bit more involved than that of Proposition~\ref{prop:coupling:c}, since before applying Lemma~\ref{lma:coupling} we need to make sure that the geodesics attain the `correct' length. We outline the proof here, and refer the reader to~\cite{A11thesis} for the remaining details.

Note that in the coupling constructed above, the difference between $N(I,v_{n,i})$ and $N'(I',v_{n,i})$ is constant for all $i$ and all but finitely many $n\geq0$. In order to succeed with the coupling in the discrete case, we first need to couple the infections so that this difference is either zero or the odd number $n^\ast=\sum_{j\in J^\ast}n_j$ figuring in the assumption of Proposition~\ref{prop:coupling:d}. This can be accomplished as follows. First, assign passage times equal for both infections. Second, define two events $C_n$ and $D_n$ such that if one occurs for either infection, then $N(I,v_{n,i})-N'(I',v_{n,i})$ changes by 2 for all $i$ and all but finitely many $n\geq0$ (cf.\ Figure~\ref{fig:3tubeDn}).
\begin{figure}[htbp]
\begin{center}
\resizebox{0.9\textwidth}{!}{\input{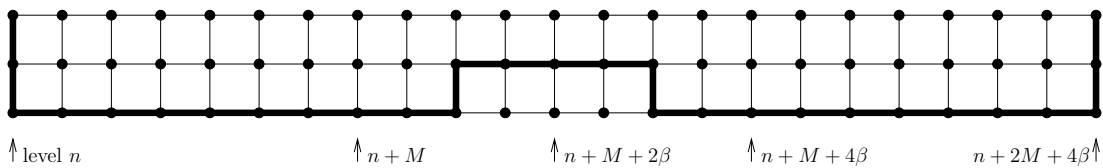}}
\end{center}
\caption{If $D_n$ occurs, then the infection is forced to follow the thick line, whereas if $C_n$ had occurred, then the infection would have followed the straight line segment.}
\label{fig:3tubeDn}
\end{figure}
Repeating this procedure, we cause the difference to perform a random walk on either $2\Z$ or $2\Z+1$. If $\dist(x,y)$ is even for all $x\in I$ and $y\in I'$, then the walk liven on $2\Z$ and applying Lemma~\ref{lma:coupling} will be easy. If this is not the case, then the random walk may live on $2\Z+1$, and the additional condition \emph{b)} is necessary.

Once the difference between geodesics have attained the right value, the coupling will continue along the lines of the continuous case. However, we need to consider a variant of the event $A_n^\ast$, since the infections may have passed an edge with value $\Mt$ in order to reach this stage. The remaining details are presented where indicated above.

\begin{remark}
If $\dist(\mathbf{x},\mathbf{y})$ is odd, for all $\mathbf{x}\in I$, $\mathbf{y}\in I^\prime$, then condition \emph{a)} of Proposition \ref{prop:coupling:d} is necessary. To see this, assume that an exact coupling is possible. In particular, $T(I,v)=\Tp(I',v)$ for some vertex $v$. But, if one infection has an even number of edges to pass in order to reach $v$, the other has an odd number of edges to pass. Thus,
$$
0\;=\;T(I,v)-\Tp(I',v)\;=\;\sum_{j\in J}n_jt_j-\sum_{j\in J}\np_jt_j,
$$
for integers $n_j$ and $\np_j$ such that $\sum_{j\in J}(n_j-\np_j)$ is odd. Hence, condition \emph{a)} holds.
\end{remark}

\begin{remark}\label{rem:no1dcoupling}
Condition \emph{a)} of Proposition~\ref{prop:coupling:d} is not sufficient for the existence of an exact coupling on arbitrary essentially 1-dimensional periodic graphs. The distribution $\Pt(1)=\Pt(1+3/5)=1/2$ satisfies the condition, but it is not always possible to exactly couple two infections on the graph with vertex set $\Z\times\{0,1\}$ and two vertices are connected if their Euclidean distance is $\leq\sqrt{2}$.
However, both condition \emph{a)} and \emph{b)} of Proposition \ref{prop:coupling:d} could be dropped for e.g.\ the class of triangular graphs with vertex set $\Z\times\{0,1,\ldots,K-1\}$ and where two vertices at Euclidean distance is 1 and every two vertices $(n,m)$ and $(n+1,m+1)$ for any $n\in\Z$ and $m=0,1,\ldots,K-2$, are connected by an edge.
\end{remark}

\begin{remark}
Condition \emph{a)} of Lemma \ref{lma:coupling} can be weakened to distributions $\Pt$ whose convolution with itself has an absolutely continuous component. In fact, it is sufficient if $\Pt$ convoluted with itself $n$ times, for some $n\geq0$, has an absolutely continuous component. Since the distribution of a sum of independent random variables is the convolution of the individual distributions, we may instead of specifying how to choose $(\tau_j,\tp_j)$ for $j\geq1$, choose $\big(\sum_{k=(j-1)n+1}^{jn}\tau_k,\sum_{k=(j-1)n+1}^{jn}\tp_k\big)$ according to the same specification. Consequently, the assumption of Proposition \ref{prop:coupling:c} can be weakened correspondingly.
\end{remark}

\subsection{No exact coupling possible on trees}\label{sec:notreecoupling}

We have seen that there is an exact coupling of two first-passage percolation infections on any essentially 1-dimensional periodic graph when the passage time distribution has an absolutely continuous component. We also saw how this sort of coupling gave rise to a 0--1 law. One may ask whether a continuous component is sufficient for an analogous coupling, and corresponding 0--1 law, on any graph? We will answer this question no, by showing that the binary tree $\mathbb{T}^2$ constitutes a counterexample. $\mathbb{T}^2$ is the infinite graph that does not contain any circuit, and where each vertex has three neighbours. The graph is completely homogeneous and one vertex, called the \emph{root}, is chosen for reference. Let $\{\tau_e\}_{e\in\Eb}$ be a set of independent and exponentially distributed passage times associated with the edge set $\Eb$ of $\mathbb{T}^2$, and analogous to before, let
$$
B_t=\big\{v\in\Vb:T\left(\text{root},v\right)\leq t\big\}.
$$

The following argument is based on the theory of continuous branching processes\nocite{athney72}. Define the front line of the infection at time $t$ as
$$
F_t:=\#\big\{v\not\in B_t:v\text{ shares an edge with some }u\in B_t\big\}.
$$
Note that $F_0=3$ and that $F_t$ increases by one, when $B_t$ does. Hence, $F_t$ can be seen as a continuous time branching process with $F_t$ individuals at time $t$. Each individual gives with probability one birth to two children (and dies) after an exponentially distributed time, independent of one another. It is well-known (see e.g.\ \citet[Theorems III.7.1--2]{athney72}) that, for some Malthusian parameter $\lambda>0$,
\be\label{eq:existsW}
\exists W:=\lim_{t\raw\infty}F_te^{-\lambda t},\quad\text{almost surely},
\ee
and that $\E[W]=3$. Let $\tau_{e_1}$, $\tau_{e_2}$ and $\tau_{e_3}$ denote the passage time of the edges connected to the root, and let $\tilde{F}_t$ denote $F_t$ conditioned on $\{\tau_{e_1},\tau_{e_2},\tau_{e_3}\geq1\}$. Then, by the lack-of-memory property of the exponential distribution, we have that $\tilde{F}_{t+1}\stackrel{d}{=}F_t$ for any $t\geq0$. Thus, by (\ref{eq:existsW}) we have almost surely
$$
\lim_{t\raw\infty}\tilde{F}_te^{-\lambda t}\;\stackrel{d}{=}\;e^{-\lambda}\lim_{t\raw\infty}F_te^{-\lambda t}\;=\;e^{-\lambda}W,
$$
and we conclude that $W$ is almost surely non-constant. Note that the event
$$
\big\{W=\lim_{t\raw\infty}F_te^{-\lambda t}\leq x\big\}\in\T,\quad\text{for every }x.
$$
Then, a 0--1 law analogous to Theorem \ref{thm:0--1law} cannot hold for first-passage percolation on $\mathbb{T}^2$, since this would imply that $P(W\leq x)\in\{0,1\}$, i.e., that $W$ is almost surely constant.

\begin{acknow}
The author is grateful to Olle Häggström for introducing him to this problem, as well as for his valuable advice along the way. He would also like to thank Vladas Sidoravicius for a helpful discussion, Erik Broman for his constructive remarks on the manuscript, as well as Andreas Nordvall-Lagerås for pointing out the book by \cite{gut09}, which enriched an earlier version of this paper.
\end{acknow}

\bibliographystyle{plainnat}
\bibliography{bibpercolation}

\end{document}